\newcommand{\set}[2]{\left\{ {#1} \,;\, {#2} \right\}}
\newcommand{\Ltwo}{L^{2}}
\newcommand{\smooth}{C^{\infty}}
\newcommand{\G}{\mathbf{G}} 
\newcommand{\one}{\mathbbm{1}} 
\newcommand{\D}{\mathbb{D}}
\newcommand{\field}{\mathfrak{k}} 
\newcommand{\N}{\mathbb{N}}
\newcommand{\Q}{\mathbb{Q}}
\newcommand{\R}{\mathbb{R}}
\newcommand{\Z}{\mathbb{Z}}
\newcommand{\bN}{\mathbb{N}}
\newcommand{\bQ}{\mathbb{Q}}
\newcommand{\bR}{\mathbb{R}}
\newcommand{\bZ}{\mathbb{Z}}
\newcommand\rquot[2]{
  \mathchoice
  {
    \text{\raise0.5ex\hbox{$#1$}\big/\lower0.5ex\hbox{$#2$}}%
  }
  {
    #1\,/\,#2
  }
  {
    #1\,/\,#2
  }
  {
    #1\,/\,#2
  }
}
\newcommand\lquot[2]{
  \mathchoice
  {
    \text{\lower0.5ex\hbox{$#1$}\big\backslash\raise0.5ex\hbox{$#2$}}%
  }
  {
    #1\,\backslash\,#2
  }
  {
    #1\,\backslash\,#2
  }
  {
    #1\,\backslash\,#2
  }
}
\newcommand\lrquot[3]{
  \mathchoice
  {
    \text{\lower0.5ex\hbox{$#1$}\big\backslash\raise0.5ex\hbox{$#2$}\big/
      \lower0.5ex\hbox{$#3$}}%
  }
  {
    #1\,\backslash\,#2\,/\,#3
  }
  {
    #1\,\backslash\,#2\,/\,#3
  }
  {
    #1\,\backslash\,#2\,/\,#3
  }
}
\newcommand{\weakstar}{weak$^{\ast}$~}
\newcommand{\Dcal}{\mathcal{D}}
\newcommand{\Jcal}{\mathcal{J}}
\newcommand{\Pcal}{\mathcal{P}}
\newcommand{\Scal}{\mathcal{S}}
\newcommand{\der}[1][]{
  \ifthenelse{ \equal{#1}{} }%
  {\ensuremath{\mathrm{d}}}%
  {\ensuremath{\mathrm{d}_{#1}\!}}%
}
\newcommand{\vol}[1]{\mathrm{Vol}( {#1} )}
\newcommand{\covol}[1]{\mathrm{Covol}\left( {#1} \right)}
\newcommand{\liea}{\mathfrak{a}}
\newcommand{\lieg}{\mathfrak{g}}
\newcommand{\liem}{\mathfrak{m}}
\newcommand{\liep}{\mathfrak{p}}
\newcommand{\liesltwo}{\mathfrak{sl}_{2}}
\newcommand{\SO}{\mathrm{SO}}
\newcommand{\SL}{\mathrm{SL}}
\newcommand{\Rspan}{\mathrm{span}_{\R}}
\newcommand{\cl}[1]{\overline{ {#1} }}
\newtheorem{theorem}{Theorem}\numberwithin{theorem}{section}
\newtheorem{lemma}[theorem]{Lemma}
\newtheorem{proposition}[theorem]{Proposition}
\newtheorem{corollary}[theorem]{Corollary}
\theoremstyle{remark}\newtheorem{remark}[theorem]{Remark}
\theoremstyle{definition}\newtheorem{definition}[theorem]{Definition}
\begin{document}
\title[Primitive rational points]{Primitive rational points on expanding horospheres in Hilbert modular surfaces}
\author{Manuel Luethi}
\address{Department of Mathematics, ETH Zurich, R\"amistrasse 101, 8092 Z\"urich, Switzerland}
\email{manuel.luethi@math.ethz.ch}
\thanks{M.~L.~acknowledges the support by the SNSF (Grant 200021-178958)}
\date{\today}
\begin{abstract}
In recent work by Einsiedler, Mozes, Shah and Shapira the limiting distributions of primitive rational points on expanding horospheres was examined in arbitrary dimension, and a suspended version of this result was announced. Motivated by this, we prove an analog to the announcement for primitive rational points in Hilbert modular surfaces via effective mixing.
\end{abstract}
\maketitle
\allowdisplaybreaks\frenchspacing

\section{Introduction}\label{sec:introduction}
This article grew out of a well-known observation on the relation between Kloosterman sums and~$\mathrm{SL}_{2}(\mathbb{Z})\backslash\mathrm{SL}_{2}(\mathbb{R})$ (e.g. \cites{Marklof2010,Primitive}). In order to motivate the problem examined, we will first give a description of this simpler case and then reformulate it for the situation under consideration.
\subsection{Motivation}\label{sec:motivation}
We consider the natural action of~$\mathrm{SL}_{2}(\mathbb{R})$ on the unit tangent bundle to the modular surface~$\mathrm{SL}_{2}(\mathbb{Z})\backslash\mathrm{SL}_{2}(\mathbb{R})$. For what follows, we denote
\begin{align*}
  A&=\left\{a(y)=\begin{pmatrix}y^{-1} & 0 \\ 0 & y\end{pmatrix}:y\in\mathbb{R}_{>0}\right\},\\
  U&=\left\{u_{t}=\begin{pmatrix}1 & t \\ 0 & 1\end{pmatrix}:t\in\mathbb{R}\right\},\text{ and }\\
  V&=\left\{v_{s}=\begin{pmatrix}1 & 0 \\ s & 1\end{pmatrix}:s\in\mathbb{R}\right\}.
\end{align*}
The action of~$A$ on~$\mathrm{SL}_{2}(\mathbb{Z})\backslash\mathrm{SL}_{2}(\mathbb{R})$ is a reparametrized realization of the geodesic flow. It has been shown by Sarnak \cite{Sarnak1981}, that long horocycles equidistribute in~$\mathrm{SL}_{2}(\mathbb{Z})\backslash\mathrm{SL}_{2}(\mathbb{R})$, i.e.~let~$\mu_{y}$ denote the push-forward of the normalized~$U$-invariant measure on~$\mathrm{SL}_{2}(\mathbb{Z})U$ under the action of the element~$a(y)$, or equivalently the unique~$U$-invariant probability measure on~$\Gamma Ua(y)$, then~$\mu_{y}$ equidistributes towards the unique~$\mathrm{SL}_{2}(\mathbb{R})$-invariant probability measure on~$\mathrm{SL}_{2}(\mathbb{Z})\backslash\mathrm{SL}_{2}(\mathbb{R})$---the Haar measure---as~$y\to\infty$. On the other hand, the interpretation of~$\mathrm{SL}_{2}(\mathbb{Z})\backslash\mathrm{SL}_{2}(\mathbb{R})$ as the space of unimodular lattices in~$\mathbb{R}^{2}$ combined with Mahler's compactness criterion yields that~$\Gamma u_{t}a(y)$ diverges to infinity if and only if~$t$ is a rational number, and similarly for~$\Gamma v_{s}a(y)^{-1}$. As the rational points~$\set{\Gamma u_{t}a(y)}{t\in\mathbb{Q}}$ form a dense subset of an equidistributing orbit, it is clear that the divergence is not uniform, and one might ask about the speed of divergence. The following intriguing arithmetic miracle answers this question completely:
\begin{proposition}\label{prop:arithmeticmiracle}
  The intersection~$\Gamma Ua(y)\cap\Gamma V$ is non-empty if and only if~$y=n$ for some~$n\in\mathbb{N}$. If~$t,s\in\mathbb{R}$ satisfy~$\Gamma u_{t}a(n)=\Gamma v_{s}$, then there is~$j\in\mathbb{Z}$ coprime to~$n$ such that~$t=\frac{j}{n}$ and~$s=\frac{j^{\times}}{n}$ for~$j^{\times}j\equiv 1\,\mathrm{mod}\,n$. Conversely~$\Gamma u_{j/n}a(n)\in\Gamma V$ whenever~$j$ and~$n$ are coprime.
\end{proposition}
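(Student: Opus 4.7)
The plan is to translate the coset equality $\Gamma u_{t}a(y)=\Gamma v_{s}$ into the condition that a single explicit $2\times 2$ matrix
\[
  \gamma:=v_{s}(u_{t}a(y))^{-1}=v_{s}a(y)^{-1}u_{-t}
\]
lies in $\mathrm{SL}_{2}(\Z)$, and then to read off the arithmetic of $y$, $t$, $s$ from the entries.

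A direct computation gives
\[
  \gamma=\begin{pmatrix} y & -ty \\ sy & y^{-1}-sty\end{pmatrix},
\]
whose determinant is automatically $1$. Demanding integer entries yields, in order: $y$ must be a positive integer (call it $n$); $tn\in\Z$, so $t=j/n$ for some $j\in\Z$; $sn\in\Z$, so $s=k/n$ for some $k\in\Z$; and finally the $(2,2)$-entry $(1-jk)/n$ must be integral, i.e.\ $jk\equiv 1\,\mathrm{mod}\,n$. This last congruence simultaneously forces both $j$ and $k$ to be coprime to $n$ and identifies $k$ with the modular inverse $j^{\times}$, which is exactly the formula claimed.

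For the converse, given $j\in\Z$ coprime to $n$, pick the modular inverse $j^{\times}$ so that $jj^{\times}\equiv 1\,\mathrm{mod}\,n$ and set $t=j/n$, $s=j^{\times}/n$. Reading the computation backwards, the matrix
\[
  \gamma=\begin{pmatrix} n & -j \\ j^{\times} & (1-jj^{\times})/n\end{pmatrix}
\]
has integer entries (integrality of the bottom-right entry follows from $jj^{\times}\equiv 1\,\mathrm{mod}\,n$) and determinant $1$, and it satisfies $\gamma u_{t}a(n)=v_{s}$; hence $\Gamma u_{j/n}a(n)=\Gamma v_{j^{\times}/n}\in\Gamma V$.

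There is no genuine obstacle here: the argument is a one-line matrix computation followed by elementary divisibility bookkeeping. The only point requiring a touch of care is that the primitivity (coprimality of $j$ with $n$) should emerge as a \emph{consequence} of the integrality of the $(2,2)$-entry rather than being assumed up front, and that the rational parameters $t$, $s$ indeed have denominator exactly $n$.
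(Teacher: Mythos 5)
Your proof is correct and is precisely the elementary calculation the paper alludes to (the paper omits the details, noting only that the proposition follows from an elementary computation): reducing the coset equality to integrality of the single matrix $v_{s}a(y)^{-1}u_{-t}$ and reading off $y=n$, $t=j/n$, $s=j^{\times}/n$ with $jj^{\times}\equiv 1\ \mathrm{mod}\ n$ from its entries is exactly the intended argument, and your converse construction settles the remaining direction.
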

The proof of Proposition \ref{prop:arithmeticmiracle} is an elementary calculation. Interpreting~$a(y)$ as the geodesic flow for time~$\log y$, and using the uniform divergence of~$\Gamma Va(y)$ as~$y\to\infty$, we understand that the points~$\set{\Gamma u_{j/n}}{\gcd(j,n)=1}$ under right multiplication with~$a(y)$ uniformly diverge into the cusp after time~$\log y\geq\log n$. It remains to examine the behaviour of these points for times~$\alpha\log n$ for~$\alpha$ in the interval~$(0,1)$, i.e.~the behaviour of the sets
\begin{equation*}
  \set{\Gamma u_{j/n}}{\gcd(j,n)=1}a(n^{\alpha}).
\end{equation*}
Note that Proposition \ref{prop:arithmeticmiracle} implies
\begin{equation*}
  \set{\Gamma u_{j/n}}{\gcd(j,n)=1}a(n^{\alpha})=\set{\Gamma v_{j/n}}{\gcd(j,n)}a(n^{\alpha-1}).
\end{equation*}
As the difference between~$U$ and~$V$ orbits only lies in the choice of the forward direction for the geodesic flow, it suffices to consider~$\alpha\in(0,1/2]$. As announced in \cites{Primitive,survey}, for fixed~$\alpha$ one can show that the normalized counting measures on these sets equidistribute towards the Haar measure on~$\mathrm{SL}_{2}(\mathbb{Z})\backslash\mathrm{SL}_{2}(\mathbb{R})$ as~$n\to\infty$. This is examined in greater detail in~\cite{sl2}.
\subsection{Setup}\label{sec:setup}
We want to recast the discussion from above in the context of Hilbert modular surfaces. In what follows,~$\field$ is a totally real number field of degree~$d$ and~$\mathfrak{o}$ is its ring of integers. We will choose some enumeration~$\{\sigma_{i};1\leq i\leq d\}$ of the Galois embeddings~$\sigma_{i}:\field\to\mathbb{R}$. These embeddings induce an embedding of the~$\mathbb{Q}$-vector space~$\field$ in~$\mathbb{R}^{d}$, which is given by sending~$x\in \field$ to the vector~$\sigma x$ whose coordinates are~$(\sigma x)_{i}=\sigma_{i}x$. Similarly, one obtains an embedding of~$\mathrm{SL}_{2}(\field)$ in~$\mathrm{SL}_{2}(\mathbb{R})^{d}$, which sends a matrix~$g\in\mathrm{SL}_{2}(\field)$ to the element, whose~$i$-th component is the image of~$g$ under coordinate-wise application of~$\sigma_{i}$. In what follows, we will write~$\Gamma$ for the image of~$\mathrm{SL}_{2}(\mathfrak{o})$ in~$G=\mathrm{SL}_{2}(\mathbb{R})^{d}$. Applying the restriction of scalars functor for the~$\field$-group~$\mathrm{SL}_{2}$ with respect to the subfield~$\mathbb{Q}$ yields a semisimple~$\mathbb{Q}$-group~$\G$, and one can show that there is an isomorphism~$\G(\mathbb{R})\cong\mathrm{SL}_{2}(\mathbb{R})^{d}$ that restricts to an isomorphism~$\G(\mathbb{Z})\cong\Gamma$. In particular, it follows that~$\Gamma\leq G$ is an irreducible congruence lattice (cf.~\cite{Tomanov2002}). In what follows, given~$s,t\in\mathbb{R}^{d}$, we denote by~$u_{t}\in G$ the element whose~$i$-th coordinate is the matrix~$(\begin{smallmatrix}1 & t_{i} \\ 0 & 1\end{smallmatrix})$, by~$v_{s}\in G$ the element whose~$i$-th coordiante is the matrix~$(\begin{smallmatrix}1 & 0 \\ s_{i} & 1\end{smallmatrix})$, and for~$y\in(\mathbb{R}\setminus\{0\})^{d}$ by~$a(y)$ the element whose~$i$-th coordinate is given by the matrix~$\big(\begin{smallmatrix}y_{i}^{-1} & 0 \\ 0 & y_{i}\end{smallmatrix}\big)$. We define the subgroups
\begin{align*}
  A&=\set{a(y)}{y\in(\mathbb{R}\setminus\{0\})^{d}}\\
  U&=\set{u_{t}}{t\in\mathbb{R}^{d}}\\
  V&=\set{v_{s}}{s\in\mathbb{R}^{d}}
\end{align*}
Given~$x\in \field^{\times}$, we say that~$x$ is totally positive if~$\sigma_{i}x$ is positive for all~$i$. Note that the totally positive elements form a subgroup of finite index. Given a totally positive element~$x\in \field^{\times}$ and~$\alpha\in(0,1)$, we will denote by~$a_{\alpha}(x)$ the matrix~$a(x^{\alpha})$, where~$x^{\alpha}\in\mathbb{R}^{d}$ is the vector with entries~$(x^{\alpha})_{i}=(\sigma_{i}x)^{\alpha}$. Given two vectors~$t,s\in\R^{d}$, we let~$ts\in\R^{d}$ denote the vector satisfying~$(ts)_{i}=t_{i}s_{i}$ ($1\leq i\leq d$). In this way, given~$x,y\in \field$, we have~$\sigma(xy)=(\sigma x)(\sigma y)$ and for~$x,y\in \field^{\times}$ we get~$a(xy)=a(x)a(y)$.

In this setup the orbit~$\Gamma Ua_{T}$ becomes equidistributed as~$T\to\infty$ if~$a_{T}$ is the matrix in~$A$ defined by the vector whose entries are all equal to~$T$. Repeating the question from Section \ref{sec:motivation} and the calculation required for Proposition \ref{prop:arithmeticmiracle}, one obtains
\begin{proposition}\label{prop:arithmeticmiraclenumberfield}
  Let~$t,s\in\mathbb{R}^{d}$ and~$y\in(\mathbb{R}\setminus\{0\})^{d}$. The following are equivalent:
  \begin{enumerate}
  \item~$\Gamma u_{t}a(y)=\Gamma v_{s}$.
  \item~$y$,~$yt$ and~$ys$ are integral in~$\field$ satisfying~$(yt)(ys)\equiv 1\,\mathrm{mod}\,y$.
  \end{enumerate}
\end{proposition}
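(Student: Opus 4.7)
The plan is to prove both implications by direct matrix computation. The condition $\Gamma u_{t}a(y)=\Gamma v_{s}$ is equivalent to the element $g := v_{s}a(y)^{-1}u_{t}^{-1} \in G$ lying in $\Gamma$, i.e.\ in the image of $\SL_{2}(\mathfrak{o})$ under the coordinate-wise Galois embedding $\sigma$.

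First I would compute each coordinate of $g$ explicitly. Multiplying the three matrices out yields, in the $i$-th factor,
\[
g_{i}=\begin{pmatrix} y_{i} & -y_{i}t_{i} \\ s_{i}y_{i} & y_{i}^{-1}-s_{i}y_{i}t_{i}\end{pmatrix},
\]
which has determinant $1$ coordinate by coordinate. Thus $g\in\Gamma$ if and only if there exist $\alpha,\beta,\gamma,\delta\in\mathfrak{o}$ with $\sigma(\alpha)=y$, $\sigma(\beta)=yt$, $\sigma(\gamma)=ys$, $\sigma(\delta)=y^{-1}-syt$ and $\alpha\delta+\beta\gamma=1$.

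Second I would show that these integrality conditions are exactly those stated. The existence of $\alpha,\beta,\gamma$ is precisely the hypothesis that $y,\,yt,\,ys$ be integral in $\field$. Given such elements, the identity $\sigma(\alpha)\sigma(\delta)=1-\sigma(\beta\gamma)$ together with injectivity of $\sigma$ forces $\alpha\delta=1-\beta\gamma$ in $\field$, so the existence of $\delta\in\mathfrak{o}$ is equivalent to $\alpha\mid 1-\beta\gamma$ in $\mathfrak{o}$, i.e.\ $\beta\gamma\equiv 1\pmod{\alpha}$, which under $\sigma$ is precisely the stated congruence $(yt)(ys)\equiv 1\pmod{y}$. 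The determinant condition $\alpha\delta+\beta\gamma=1$ is then automatic.

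For the converse, given integral $y,yt,ys$ satisfying $(yt)(ys)\equiv 1\pmod{y}$, one picks $\delta\in\mathfrak{o}$ with $\alpha\delta+\beta\gamma=1$ and checks that the matrix $\begin{pmatrix}\alpha & -\beta\\ \gamma & \delta\end{pmatrix}\in\SL_{2}(\mathfrak{o})$ is a preimage of $g$ under $\sigma$, hence $g\in\Gamma$ and $\Gamma u_{t}a(y)=\Gamma v_{s}$. I do not anticipate any real obstacle; the only item requiring care is keeping the coordinate-wise identities in $\R^{d}$ cleanly separated from their translates in $\field$ via the embedding $\sigma$, so that the injectivity of $\sigma$ is applied in the correct direction.
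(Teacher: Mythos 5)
Your computation is correct and is exactly the ``elementary calculation'' the paper has in mind: the paper gives no written proof of this proposition, merely asserting it follows by repeating the computation behind Proposition~\ref{prop:arithmeticmiracle}, and your reduction to $v_{s}a(y)^{-1}u_{t}^{-1}\in\Gamma$ followed by the coordinate-wise matrix identity and the integrality/congruence translation via injectivity of $\sigma$ fills in precisely that argument. No gaps; the approach matches the intended one.
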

Again we will examine the behavior of the primitive rational points, i.e.~the sets
\begin{equation*}
  \mathcal{P}_{y}^{\alpha,\times}=\set{\Gamma u_{j/y}}{j\in\big(\mathfrak{o}/y\mathfrak{o}\big)^{\times}}a_{\alpha}(y)
\end{equation*}
for---for the sake of simplicity of notation---totally positive elements~$y\in\mathfrak{o}$. In this case the ideal~$y\mathfrak{o}$ is a finite index subgroup in~$\mathfrak{o}$ of index~$N(y)$, where~$N(y)$ is the product of the images of~$y$ under the distinct Galois embeddings. We denote~$\phi(y)=\lvert(\rquot{\mathfrak{o}}{y\mathfrak{o}})^{\times}\rvert$. We will prove effective equidistribution of the sets~$\mathcal{P}_{y}^{\alpha,\times}$ for the unique~$G$-invariant probability measure on~$\Gamma\backslash G$ as~$N(y)\to\infty$. More precisely, we will prove
\begin{theorem}\label{thm:mainthmnumberfield}
  Let~$\field$ be a totally real number field of degree~$d>1$,~$\alpha\in(0,1)$. There exists an~$\Ltwo$-Sobolev norm~$\mathcal{S}$ on~$C_{c}^{\infty}(\Gamma\backslash G)$ such that for all totally positive~$y\in\mathfrak{o}$ 
  \begin{equation*}
    \bigg\lvert\frac{1}{\phi(y)}\sum_{j\in(\mathfrak{o}/y\mathfrak{o})^{\times}}f\big(\Gamma u_{j/y}a_{\alpha}(y)\big)-\int_{\Gamma\backslash G}f\bigg\rvert\ll\alpha^{-\frac{1}{2}}\Big(\tfrac{(\log\log N(y))^{d}}{\log N(y)}\Big)^{\frac{1}{2}}\mathcal{S}(f)
  \end{equation*}
  for all~$f\in C_{c}^{\infty}(\Gamma\backslash G)$, with implicit constant independent of~$y,\alpha$ and~$f$. In particular, the primitive rational points of denominator~$y$ on~$\Gamma Ua_{\alpha}(y)$ equidistribute in~$\Gamma\backslash G$ as~$N(y)\to\infty$.
\end{theorem}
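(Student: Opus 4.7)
The plan is to combine the arithmetic miracle of Proposition~\ref{prop:arithmeticmiraclenumberfield} with effective mixing for the action of $G$ on $\Gamma\backslash G$, applied through a thickening in the contracting direction $V$. By the $U\leftrightarrow V$ symmetry discussed in Section~\ref{sec:motivation}, I reduce at the outset to $\alpha\in(0,1/2]$. Proposition~\ref{prop:arithmeticmiraclenumberfield}, combined with the commutation relation $v_{s}a(r)=a(r)v_{sr^{-2}}$ applied coordinate-wise in $G=\SL_{2}(\R)^{d}$, allows one to rewrite each primitive rational point as
\[
\Gamma u_{j/y}a_{\alpha}(y)=\Gamma a(y^{\alpha-1})v_{\xi_{j}},\qquad \xi_{j}=j^{\times}y^{1-2\alpha}\in\R^{d},
\]
where $j^{\times}$ is the multiplicative inverse of $j$ modulo $y$. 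All primitive rational points thus sit on the single $V$-orbit $\Gamma a(y^{\alpha-1})V$, whose base point drifts into the cusp as $N(y)\to\infty$.

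For a parameter $\delta>0$ to be optimized, I would smoothly thicken each primitive point by a ball $B_{\delta}^{V}$ in $V$. A Sobolev norm $\mathcal{S}$ of sufficiently high order controls the replacement of $f$ by its $V$-average, with error of size $O(\delta\,\mathcal{S}(f))$. The thickened sum then becomes a weighted integral of $f$ over a subset of the orbit $\Gamma a(y^{\alpha-1})V$ consisting of $V$-balls around the points $\xi_{j}$. The number-theoretic step is to compare this to a full box integral $\vol(R_{y})^{-1}\int_{R_{y}}f(\Gamma a(y^{\alpha-1})v_{s})\,ds$, where $R_{y}\subset V\cong\R^{d}$ is the image under the diagonal scaling by $y^{1-2\alpha}$ of a fundamental parallelepiped for $y\mathfrak{o}\subset\mathfrak{o}$. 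Removing the primitivity constraint on $j$ is done by a M\"obius sieve over prime ideals dividing $y$; Mertens' theorem in $\field$ provides the density bound $\phi(y)/N(y)\gg(\log\log N(y))^{-d}$, which is the origin of the $(\log\log N(y))^{d}$ factor in the final bound.

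The resulting box average is then controlled by effective equidistribution of expanding $V$-orbits in $\Gamma\backslash G$, itself a standard consequence of effective mixing coupled with an auxiliary $U$-thickening and Cauchy--Schwarz (\`a la Venkatesh's sparse equidistribution scheme). Effective mixing is available here because $\Gamma$ is a congruence lattice in an irreducible semisimple $\Q$-group, which enjoys a uniform spectral gap (Clozel; Burger--Sarnak). Balancing the thickening error $\delta\,\mathcal{S}(f)$, the sieve density, and the mixing error, then optimizing in $\delta$, produces the claimed rate. The main obstacle is the cusp dependence in the mixing step: the base point $\Gamma a(y^{\alpha-1})$ drifts into the cusp with $y$, so one needs effective equidistribution uniform in the base point together with quantitative control on cusp excursions of the $V$-orbit. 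The explicit $\alpha^{-1/2}$ reflects this trade-off, the usable mixing window shrinking linearly with $\alpha$ so that the subsequent Cauchy--Schwarz step in the $U$-direction produces a square-root loss with this coefficient.
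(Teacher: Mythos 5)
Your route is genuinely different from the paper's, and as written it has a real gap at its central step: the removal of the primitivity constraint. You dispose of it in one sentence ("a M\"obius sieve over prime ideals dividing $y$"), but after M\"obius inversion the inner sums run over the rational points of denominator $z$ for every divisor $z\mid y$, sitting on the \emph{same} expanded horosphere $\Gamma U a_{\alpha}(y)$ (equivalently your $V$-orbit $\Gamma a(y^{\alpha-1})V$). For $\alpha$ near $\tfrac12$ these sub-collections are sparse on that horosphere (density roughly $N(z)/N(y)$ after expansion), so they are not reached by your thickening-plus-mixing scheme: thickening by $\delta$ in $V$ only recovers a full-orbit average when the points are $\delta$-dense, which fails for all proper divisors in the critical range. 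One would need an effective equidistribution statement for each sub-collection, uniform in $z\mid y$ (including the unit-balancing of $z$, which is not the unit that balances $y$), with explicit dependence of all constants on $z$ -- essentially a family of statements of the type of Proposition~\ref{prop:formularationalpoints} -- and none of this is addressed. A telling symptom is that your argument never uses $d>1$, whereas in the paper this hypothesis is exactly what makes the primitivity step work: the diagonal unit element $a(\varepsilon)$, $\varepsilon\in\mathfrak{o}_{>0}^{\times}$, preserves the set of primitive points ($\Gamma u_{k/y}a_{\alpha}(y)a(\varepsilon)=\Gamma u_{\varepsilon^{2}k/y}a_{\alpha}(y)$) and acts with a spectral gap, so the paper passes from primitive points to \emph{all} rational points by positivity and Cauchy--Schwarz, $\lvert\mu_{y}^{\alpha,\times}(D_{K}f)\rvert^{2}\leq\tfrac{N(y)}{\phi(y)}\mu_{y}^{\alpha}\big((D_{K}f)^{2}\big)$, with $D_{K}$ the discrepancy along the unit flow, then applies Proposition~\ref{prop:rationalpoints} to $(D_{K}f)^{2}$ and the effective mean ergodic theorem (Lemma~\ref{lem:subcollectionstrick}, via Proposition~\ref{prop:effectivevonneumann}). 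No sieve over divisors is needed.

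The second problem is that the claimed rate does not follow from your balancing. In the paper the factor $\big(\alpha\log N(y)\big)^{-1/2}$ has a specific origin: the mean ergodic bound $K^{-1}$ must be traded against the growth $e^{\beta K}$ of $\mathcal{S}\big((D_{K}f)^{2}\big)$ under the unit flow, against the rate $N(y)^{-\kappa_{2}\alpha}$ of Proposition~\ref{prop:rationalpoints}, forcing $K\asymp\alpha\log N(y)$; the $(\log\log N(y))^{d}$ enters only at the very end through Proposition~\ref{prop:totientbound}, converting $N(y)/\phi(y)$. Your sketch, if the sieve and mixing steps all carried polynomial savings as you assume, would produce a polynomial-in-$N(y)$ rate, not this logarithmic shape; conversely, your attributions of the $\alpha^{-1/2}$ to a ``mixing window shrinking linearly in $\alpha$'' and of the $(\log\log N(y))^{d}$ to Mertens inside the sieve are not derivations but reverse-engineering of the stated bound. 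Finally, the obstacle you single out -- the base point $\Gamma a(y^{\alpha-1})$ drifting into the cusp -- is not the actual difficulty: the standard Margulis thickening argument (Propositions~\ref{prop:banana} and~\ref{prop:equidistributionoflonghorospheres}) gives effective equidistribution of the full expanding horosphere regardless of where its base point travels; the difficulty is the sparseness of the primitive (and, in your sieve, the divisor) sub-collections, which is precisely what your proposal leaves unresolved.
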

\begin{remark}
  It is worthwhile pointing out that the implicit constant in Theorem~\ref{thm:mainthmnumberfield} depends on the spectral gap of the quotient~$\lquot{\Gamma}{G}$. As the spectral gap goes to zero, the implicit constant diverges to infinity.
\end{remark}
Let us give an ineffective outline of the proof. For what follows, assume that the rational points
\begin{equation*}
  \mathcal{P}_{y}^{\alpha}=\set{\gamma u_{j/y}}{j\in\mathfrak{o}}a_{\alpha}(y)\subseteq\lquot{\Gamma}{G}
\end{equation*}
equidistribute as~$\lvert N(y)\rvert\to\infty$, i.e.~the measures
\begin{equation}\label{eq:definitioncountingmeasurerationalpoints}
  \mu_{y}^{\alpha}=\frac{1}{\lvert N(y)\rvert}\sum_{j\in\rquot{\mathfrak{o}}{y\mathfrak{o}}}\delta_{\Gamma u_{j/y}a_{\alpha}(y)}
\end{equation}
converge to the unique~$G$-invariant probability measure~$m_{\lquot{\Gamma}{G}}$ on~$\lquot{\Gamma}{G}$ in the \weakstar topology. Let us also denote
\begin{equation}\label{eq:definitioncountingmeasureprimitiverationalpoints}
  \mu_{y}^{\alpha,\times}=\frac{1}{\phi(y)}\sum_{j\in(\rquot{\mathfrak{o}}{y\mathfrak{o}})^{\times}}\delta_{\Gamma u_{j/y}a_{\alpha}(y)}
\end{equation}

Assume first that~$y\mathfrak{o}\subseteq\mathfrak{o}$ is a prime ideal, so that
\begin{equation*}
  \mu_{y}^{\alpha,\times}=\big(1+\tfrac{1}{\phi(y)}\big)\mu_{y}^{\alpha}-\tfrac{1}{\phi(y)}\delta_{\Gamma a_{\alpha}(y)}.
\end{equation*}
Equidistribution of rational points immediately implies that~$\mu_{y}^{\alpha,\times}$ converges to the unique~$G$-invariant probability measure as~$\lvert N(y)\rvert\to\infty$. More generally, let~$\varepsilon>0$ and define
\begin{equation*}
  \D(\varepsilon)=\set{y\in\mathfrak{o}}{\phi(y)\geq\varepsilon\lvert N(y)\rvert}.
\end{equation*}
From this one can relatively easily deduce that for sequences~$(y_{k})_{k\in\N}$ in~$\D(\varepsilon)$ satisfying~$\lvert N(y_{k})\rvert\to\infty$, the primitive rational points
\begin{equation*}
  \mathcal{P}_{y}^{\alpha,\times}=\set{\Gamma u_{y^{-1}j}a_{\alpha}(y)}{j\in(\mathfrak{o}/y\mathfrak{o})^{\times}}
\end{equation*}
equidistribute towards the unique~$G$-invariant probability measure on~$\Gamma\backslash G$. To this end let~$\mathcal{P}_{y}^{\alpha,0}=\mathcal{P}_{y}^{\alpha}\setminus\mathcal{P}_{y}^{\alpha,\times}$ and denote by~$\mu_{y}^{\alpha,\times}$ and~$\mu_{y}^{\alpha,0}$ the normalized counting measures on~$\Pcal_{y}^{\alpha,\times}$ and~$\Pcal_{y}^{\alpha,0}$ respectively. Then
\begin{equation*}
  \mu_{y}^{\alpha}=\tfrac{\phi(y)}{\lvert N(y)\rvert}\mu_{y}^{\alpha,\times}+\tfrac{\lvert N(y)\rvert-\phi(y)}{\lvert N(y)\rvert}\mu_{y}^{\alpha,0}
\end{equation*}
Let~$(y_{k})_{k\in\N}$ be a sequence in~$\D(\varepsilon)$ satisfying~$\lvert N(y_{k})\rvert\to\infty$. After possibly restricting to a subsequence, we can assume that~$\frac{\phi(y_{k})}{\lvert N(y_{k})\rvert}\to\lambda\in[\varepsilon,1]$. As~$\mu_{y_{k}}^{\alpha}$ converges to the~$G$-invariant probability measure on~$\Gamma\backslash G$, the sequence~$\mu_{y_{k}}^{\alpha,\times}$ converges to a probability measure~$\nu$ on~$\Gamma\backslash G$. Dirichlet's unit theorem now yields an element~$g\in G$ which acts ergodically on~$\Gamma\backslash G$ and for which all~$\mu_{y_{k}}^{\alpha,\times}$ are invariant. In particular,~$\nu$ exhibits the same invariance and thus extremality of ergodic measures implies that~$\nu$ equals the~$G$-invariant probability measure, i.e.~we obtain the desired equidistribution. For the general case, i.e.~without assuming that~$y\in\D(\varepsilon)$, an effective argument making use of the spectral gap is required.
\subsection{Organization of the paper}
Section \ref{sec:discrepancytrick} deduces an effective version of von Neumann's ergodic theorem for effectively mixing dynamical systems, which serves as a motivation for later arguments. This section is kept very general. In Section \ref{sec:generalresults} we discuss~$\Ltwo$-Sobolev norms as well as equidistribution of large horospheres and a general bound for the error of approximation of the space average by a sparse subset of a unipotent orbit exhibiting invariance. In Section \ref{sec:ringofintegers}, we first review prime factorization in Dedekind domains and examine the totient function for number fields. Afterwards we use Dirichlet's unit theorem to choose a Cartan subgroup of~$G$ and prove effective equidistribution of large horospheres. In Section \ref{sec:rationalpoints} we deduce equidistribution of rational points on large horospheres. In Section \ref{sec:primitiverationalpoints} we combine the equidistribution of the rational points with a version of the discrepancy trick introduced in Section \ref{sec:discrepancytrick} to prove Theorem \ref{thm:mainthmnumberfield}.
\subsection{Acknowledgements} The author would like to thank Manfred Einsiedler for suggesting the problem and many helpful discussions on the techniques applied. The author would furthermore like to thank Menny Akka, Manfred Einsiedler, Alex Gorodnik, \c{C}a\u{g}r{\i} Sert and Andreas Wieser for comments on an earlier draft.
\section{Effective mixing and von Neumann's ergodic theorem}\label{sec:discrepancytrick}
In this section we prove a tool---the \emph{discrepancy trick}---that we will use throughout the remainder of the article. It is certainly well-known to experts. It can be summarized as follows: Effective mixing for an invertible dynamical system implies the mean ergodic theorem with a rate. As its application is not restricted to the topic of this article, we state a general version. In what follows, we assume that~$(X,\mathcal{B},\mu,T)$ is an invertible probability measure preserving system and that~$\mathcal{A}\subseteq \Ltwo(X,\mu)$ is a set of real-valued functions. Given~$f\in \Ltwo(X,\mu)$, we write
\begin{equation*}
  E_{f}=\int_{X}f(x)\der\mu(x).
\end{equation*}
Assume that~$\psi:\R\to\R_{+}$ is a bounded, symmetric function decreasing monotonically on the positive half line. Assume that~$T$ is an effectively mixing transformation with rate~$\psi$, i.e.~for all~$f,g\in\mathcal{A}$ we have
\begin{equation*}
  \big\lvert\langle T^{k}f,g\rangle-E_{f}E_{g}\big\rvert\ll\psi(k)\mathcal{S}(f)\mathcal{S}(g),
\end{equation*}
where~$\mathcal{S}$ is a norm on~$\mathcal{A}$ dominating the~$\Ltwo(X,\mu)$-norm. Given~$K\in\N$ and~$f\in \Ltwo(X,\mu)$, let~$A_{K}(f)=\frac{1}{K}\sum_{n=0}^{K-1}f\circ T^{n}$.
\begin{proposition}\label{prop:effectivevonneumann}
  Let~$f\in\mathcal{A}$, then for all~$\varsigma\in(0,1)$ we have
  \begin{equation*}
    \lVert A_{K}(f)-E_{f}\rVert_{2}^{2}\ll \big(K^{-\varsigma}+\psi(K^{1-\varsigma})\big)\Scal(f)^{2}.
  \end{equation*}
\end{proposition}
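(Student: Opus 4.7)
The plan is the standard second-moment computation for ergodic averages, turned into a quantitative estimate by exploiting the effective mixing rate $\psi$ and optimizing over how one splits the resulting sum.

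First I would expand the square and use measure preservation of $T$. Since $T$ is measure-preserving, $\langle A_K(f),\mathbbm{1}\rangle=E_f$, so
\begin{equation*}
  \lVert A_K(f)-E_f\rVert_2^2=\lVert A_K(f)\rVert_2^2-E_f^2=\frac{1}{K^2}\sum_{m,n=0}^{K-1}\langle T^m f, T^n f\rangle-E_f^2.
\end{equation*}
Measure preservation rewrites $\langle T^m f,T^n f\rangle=\langle T^{m-n}f,f\rangle$, and collecting according to the difference $k=m-n$ gives
\begin{equation*}
  \lVert A_K(f)-E_f\rVert_2^2=\frac{1}{K^2}\sum_{k=-(K-1)}^{K-1}(K-\lvert k\rvert)\bigl(\langle T^k f,f\rangle-E_f^2\bigr).
\end{equation*}
At this point the hypothesis of effective mixing with rate $\psi$, together with symmetry of $\psi$, supplies the pointwise bound $\lvert\langle T^k f,f\rangle-E_f^2\rvert\ll\psi(\lvert k\rvert)\mathcal{S}(f)^2$, and after bounding $K-\lvert k\rvert\leq K$ we obtain
\begin{equation*}
  \lVert A_K(f)-E_f\rVert_2^2\ll\frac{\mathcal{S}(f)^2}{K}\sum_{\lvert k\rvert\leq K-1}\psi(\lvert k\rvert).
\end{equation*}

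Next I would optimize by splitting the remaining sum at the threshold $K^{1-\varsigma}$. On the short range $\lvert k\rvert\leq K^{1-\varsigma}$ I simply bound $\psi(\lvert k\rvert)$ by its supremum $\psi(0)$, which is finite by the boundedness hypothesis on $\psi$; this contributes $O(K^{-\varsigma}\mathcal{S}(f)^2)$ after dividing by $K$. On the long range $K^{1-\varsigma}<\lvert k\rvert\leq K-1$ I use that $\psi$ is monotonically decreasing on the positive half-line to bound $\psi(\lvert k\rvert)\leq\psi(K^{1-\varsigma})$, which yields a contribution of $O(\psi(K^{1-\varsigma})\mathcal{S}(f)^2)$. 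Summing the two ranges produces the claimed bound
\begin{equation*}
  \lVert A_K(f)-E_f\rVert_2^2\ll\bigl(K^{-\varsigma}+\psi(K^{1-\varsigma})\bigr)\mathcal{S}(f)^2.
\end{equation*}

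There is no real obstacle here: the argument is entirely elementary once effective mixing is assumed, and the only mild point of care is the bookkeeping of the symmetric tail and the fact that $\mathcal{S}$ dominates $\lVert\cdot\rVert_2$ (so that $E_f^2\leq\mathcal{S}(f)^2$, which is implicit when we invoke the mixing inequality against the constant function, or equivalently when we allow the subtraction of $E_f^2$ to be absorbed into the implied constants). The role of $\varsigma$ is to trade off the two sources of error, and the statement keeps $\varsigma$ as a free parameter so that later applications can choose it in terms of the explicit decay of $\psi$.
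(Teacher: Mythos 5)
Your proof is correct and follows essentially the same route as the paper: expand the second moment, apply the effective mixing bound termwise to $\langle T^{k}f,f\rangle-E_{f}^{2}$, and split the sum at the threshold $K^{1-\varsigma}$, bounding the near-diagonal range by the number of terms (times $\psi(0)<\infty$) and the far range by monotonicity of $\psi$. Collecting terms by the difference $k=m-n$ rather than by the paper's sets $P_{K},Q_{K}$ is only a cosmetic variation.
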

\begin{proof}
  As~$T$ is invertible and~$f$ is real-valued, we have
  \begin{align*}
    \lVert A_{K}(f)-E_{f}\rVert_{2}^{2}&\ll\frac{1}{K^{2}}\sum_{m,n=0}^{K-1}\big\langle T^{m-n}(f-E_{f}),f-E_{f}\big\rangle\\
    &\ll\frac{1}{K^{2}}\sum_{0\leq n\leq m<K}\big\langle T^{m-n}(f-E_{f}),f-E_{f}\big\rangle\\
   &\ll\frac{\mathcal{S}(f)^{2}}{K^{2}}\sum_{0\leq n\leq m<K}\psi(m-n).
  \end{align*}
  We write the range of summation as~$P_{K}\sqcup Q_{K}$, where
  \begin{align*}
    P_{K}&=\{(m,n)\in\N_{0}:m,n<K,\lvert m-n\rvert\leq K^{1-\varsigma}\}\\
    Q_{K}&=\{(m,n)\in\N_{0}:m,n<K\}\setminus P_{K}.
  \end{align*}
  The cardinality of~$P_{K}$ equals
  \begin{equation*}
    \lvert P_{K}\rvert=2K(\lfloor K^{1-\varsigma}\rfloor+1)-2\tfrac{K(K-1)}{2}.
  \end{equation*}
  Hence
  \begin{align*}
    \sum_{0\leq n\leq m<K}\psi(m-n)&=\sum_{(m,n)\in P_{K}}\psi(m-n)+\sum_{(m,n)\in Q_{K}}\psi(m-n)\\
    &\ll K^{2-\varsigma}+K(2K-2\lfloor K^{1-\varsigma}\rfloor-3)\psi(K^{1-\varsigma}).
  \end{align*}
  After division by~$K^{2}$, the claim follows.
\end{proof}
\begin{remark}
  As will become apparent in Section \ref{sec:primitiverationalpoints}, the argument provided is quite wasteful and if the function~$\psi$ is well-understood, it is often possible to deduce much better bounds.
\end{remark}
\section{Notation and general results}\label{sec:generalresults}
In this section we will introduce the general notation used throughout the article and a few general results, which are valid independent of the specific context of this paper. We will consider the set of real points~$G$ of a semisimple, linear~$\mathbb{Q}$-group~$\G$, and assume that~$G$ does not have any compact factors. We will once and for all fix a faithful rational representation~$\pi:\G\to\mathrm{GL}_{N}$ (cf.~\cite[Thm.~2.3.7]{SpringerLinearGroups}). We equip~$\mathbb{R}^{N}$ with a fixed Euclidean structure and~$\mathrm{GL}_{N}(\mathbb{R})$ with some compatible operator norm~$\lVert\cdot\rVert_{\pi}$. For~$g\in G$, we let
\begin{equation*}
  \lVert g\rVert=\max\big\{\lVert\pi(g)\rVert_{\pi},\lVert\pi(g)^{-1}\rVert_{\pi}\big\}.
\end{equation*}
We assume that~$\Gamma\leq G$ is an irreducible arithmetic lattice, and in particular that every element~$g\in G$ not contained in a compact subgroup acts ergodically on~$\lquot{\Gamma}{G}$ with respect to the unique~$G$-invariant probability measure~$m_{\lquot{\Gamma}{G}}$ on~$\lquot{\Gamma}{G}$. We will usually write
\begin{equation*}
  m_{\lquot{\Gamma}{G}}(f)=\int_{\lquot{\Gamma}{G}}f(\Gamma g)\der\Gamma g\qquad\big(f\in C_{c}(\lquot{\Gamma}{G})\big).
\end{equation*}
Denote by~$p:G\to\Gamma\backslash G$ the canonical projection. Given an element~$g\in G$, we denote by~$l_{g},r_{g}:G\to G$ the diffeomorphisms given by left- and right-multiplication with~$g$ respectively.
We denote by~$\mathfrak{g}$ the Lie algebra of~$G$, i.e.~the tangent space at the identity. Note that~$\mathfrak{g}$ carries the Euclidean structure inherited from~$\mathbb{R}^{N^{2}}$. The derivative~$D(g)$ of the local diffeomorphism~$p\circ l_{g}$ at the identity maps any basis of~$\mathfrak{g}$ to a basis of the tangent space at~$\Gamma g$ and its image is independent of the representative of~$\Gamma g$. For any~$X\in\mathfrak{g}$, the map~$g\mapsto D(g)X$ is smooth, thus any choice of a basis~$\mathcal{B}$ yields a smooth frame bundle on~$\Gamma\backslash G$. 

The Hilbert space~$\Ltwo(\Gamma\backslash G)$ yields a unitary representation of~$G$, where for~$f\in \Ltwo(\Gamma\backslash G)$ and~$g\in G$ the element~$g\cdot f\in \Ltwo(\Gamma\backslash G)$ is defined by~$g\cdot f(x)=f(xg)$ almost everywhere. Given~$f\in \Ltwo(\Gamma\backslash G)$ and~$\varphi\in C_{c}(G)$, we denote by~$\varphi\star f$ the convolution of~$\varphi$ with~$f$, i.e.\
\begin{equation*}
  \varphi\star f(x)=\int_{G}\varphi(g)(g\cdot f)(x)\der g,
\end{equation*}
where~$\der g$ denotes integration with respect to the Haar measure on~$G$. Using dominated convergence, one can show that~$\varphi\star f$ is smooth and for any left invariant vector field~$X\in\mathfrak{g}$ we have~$D(g)X(\varphi\star f)=X(\varphi)\star f(\Gamma g)$.

Using the mean-value theorem, one notes that the error of approximation of the function~$f$ at a point by its average on a small ball around that point depends on the smoothness properties of the specific function~$f$. In a similar manner, any effective equidistribution statements examined in this project will depend on the smoothness properties of the test function. The appropriate tool to measure these are~$\Ltwo$-Sobolev norms. 
\subsection{$\Ltwo$-Sobolev norms and approximate identities}\label{sec:sobolevnorms}
In this section, we introduce~$\Ltwo$-Sobolev norms and discuss some of their properties. The use of Sobolev norms in this area has become quite standard, and hence we will not prove all the properties used. For more detailed discussions of Sobolev norms on homogeneous spaces, we refer to \cites{Venkatesh2010,EMV}. The following construction works for general discrete subgroups. Let~$\mathrm{ht}:\Gamma\backslash G\to(0,\infty)$ be a smooth function. For our purposes, an~$\Ltwo$-Sobolev norm of degree~$\ell$ on~$\Gamma\backslash G$ with height function~$\mathrm{ht}:\lquot{\Gamma}{G}\to(0,\infty)$ is a choice of a frame bundle~$\mathcal{B}$ as above, together with the map~$\mathcal{S}:C_{c}^{\infty}(\Gamma\backslash G)\to\mathbb{R}$ given by
\begin{equation*}
  \mathcal{S}(f)^{2}=\sum_{X\in\Dcal_{\ell}(\mathcal{B})}\lVert(1+\mathrm{ht})^{\ell}X(f)\rVert_{2}^{2}\quad\big(f\in C_{c}^{\infty}(\Gamma\backslash G)\big),
\end{equation*}
where~$\Dcal_{\ell}(\mathcal{B})$ is the collection of vector fields given as monomials in the elements of~$\mathcal{B}$ of degree at most~$\ell$. One easily sees that the map~$\mathcal{S}$ is a norm defined on a dense subset of~$\Ltwo(\Gamma\backslash G)$. On~$G$ we choose for~$\mathrm{ht}$ a constant function whereas in the case of a lattice,~$\mathrm{ht}$ will be the height function as given in \cite[p.~153]{EMV}. In particular, it holds that~$1\ll\mathrm{ht}$ and for all~$g\in G$ and~$x\in\Gamma\backslash G$
  \begin{equation}\label{eq:multiplicativityheight}
    \mathrm{ht}(xg)\ll\lVert g\rVert\mathrm{ht}(x).
  \end{equation}
  The choice the height function as in \cite{EMV} guarantees validity of the Sobolev embedding theorem, i.e.~there exists some~$\ell_{0}$ such that whenever~$\mathcal{S}$ has degree at least~$\ell_{0}$, then 
  \begin{equation}\label{eq:sobolevembedding}
    \lVert f\rVert_{\infty}\ll\mathcal{S}(f)
  \end{equation}
  for all~$f\in C_{c}^{\infty}(\Gamma\backslash G)$. Here the implicit constant depends on the choice of~$\mathcal{S}$. In what follows, given an~$\Ltwo$-Sobolev norm~$\mathcal{S}$, we will tacitly assume that it has sufficiently large degree for \eqref{eq:sobolevembedding} to hold.

Finally, we note that the~$\Ltwo$-Sobolev norms do not depend on the initial choice of the basis of~$\mathfrak{g}$ in the sense that for any two choices the resulting norms are equivalent.
\begin{lemma}\label{lem:productofsobolevnorms}
  Let~$\mathcal{S}$ be an~$\Ltwo$-Sobolev norm. Then there are an~$\Ltwo$-Sobolev norm~$\mathcal{S}^{\prime}$ of possibly higher degree and a constant~$C_{\ell}$ depending solely on the degree~$\ell$ of~$\mathcal{S}$ such that for all~$f_{1},f_{2}\in C_{c}^{\infty}(\Gamma\backslash G)$ we have
  \begin{equation*}
    \mathcal{S}(f_{1}f_{2})\leq C_{\ell}\mathcal{S}^{\prime}(f_{1})\mathcal{S}^{\prime}(f_{2}).
  \end{equation*}
\end{lemma}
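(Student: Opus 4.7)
The plan is to reduce the bound on $\mathcal{S}(f_{1}f_{2})$ to a product of a higher-order Sobolev norm of $f_{1}$ and a higher-order Sobolev norm of $f_{2}$ by combining the Leibniz rule on differential operators with a Cauchy--Schwarz splitting that places all of the weight $(1+\mathrm{ht})^{\ell}$ on one factor and an $L^{\infty}$ norm on the other, which in turn is controlled via the Sobolev embedding~\eqref{eq:sobolevembedding}.

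First I would expand $X(f_{1}f_{2})$ for each monomial $X=Y_{1}\cdots Y_{k}\in\Dcal_{\ell}(\mathcal{B})$ using iterated Leibniz, obtaining
\begin{equation*}
  X(f_{1}f_{2})=\sum_{S\subseteq\{1,\dots,k\}}X_{S}(f_{1})\,X_{S^{c}}(f_{2}),
\end{equation*}
where $X_{S}$ and $X_{S^{c}}$ are ordered sub-monomials in the $Y_{i}$'s with $\deg X_{S}+\deg X_{S^{c}}=k\leq\ell$. This produces at most $2^{\ell}$ summands, and it suffices to bound each one. For a fixed term I would estimate
\begin{equation*}
  \bigl\lVert(1+\mathrm{ht})^{\ell}X_{S}(f_{1})X_{S^{c}}(f_{2})\bigr\rVert_{2}\leq\bigl\lVert(1+\mathrm{ht})^{\ell}X_{S}(f_{1})\bigr\rVert_{2}\bigl\lVert X_{S^{c}}(f_{2})\bigr\rVert_{\infty},
\end{equation*}
noting that the first factor is $\leq\mathcal{S}(f_{1})$ because $X_{S}\in\Dcal_{\ell}(\mathcal{B})$.

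For the $L^{\infty}$ factor, I would fix some $\ell_{0}$ large enough for the Sobolev embedding \eqref{eq:sobolevembedding} to hold and apply it to $X_{S^{c}}(f_{2})$. Writing out the right-hand side, we obtain
\begin{equation*}
  \lVert X_{S^{c}}(f_{2})\rVert_{\infty}^{2}\ll\sum_{Y\in\Dcal_{\ell_{0}}(\mathcal{B})}\bigl\lVert(1+\mathrm{ht})^{\ell_{0}}YX_{S^{c}}(f_{2})\bigr\rVert_{2}^{2},
\end{equation*}
and since $YX_{S^{c}}$ is a monomial of degree at most $\ell_{0}+\ell$ and $(1+\mathrm{ht})^{\ell_{0}}\leq(1+\mathrm{ht})^{\ell_{0}+\ell}$ (as $\mathrm{ht}\geq 0$), the right-hand side is bounded by the square of an $L^{2}$-Sobolev norm $\mathcal{S}'$ of degree $\ell_{0}+\ell$ built from the same basis $\mathcal{B}$. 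Taking $\mathcal{S}'$ of degree $\max(\ell,\ell_{0}+\ell)=\ell_{0}+\ell$ thus dominates both $\mathcal{S}$ and this higher-order norm, and summing over $X\in\Dcal_{\ell}(\mathcal{B})$ and $S\subseteq\{1,\dots,\deg X\}$ yields
\begin{equation*}
  \mathcal{S}(f_{1}f_{2})\leq C_{\ell}\mathcal{S}'(f_{1})\mathcal{S}'(f_{2}),
\end{equation*}
with $C_{\ell}$ depending only on $\ell$ (through the number $2^{\ell}$ of Leibniz summands and the implicit constant in Sobolev embedding for the fixed norm $\mathcal{S}'$).

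The argument is essentially routine; the only thing to take care of is bookkeeping of the degree and the weight~$(1+\mathrm{ht})^{\ell}$ when applying Leibniz, namely keeping the full weight on the $L^{2}$ factor and exploiting $1+\mathrm{ht}\geq 1$ to absorb the lower weight arising from the Sobolev embedding into the higher-degree norm $\mathcal{S}'$. No subtlety stemming from the geometry of $\lquot{\Gamma}{G}$ enters beyond the validity of \eqref{eq:sobolevembedding}, which is assumed from the outset.
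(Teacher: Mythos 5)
Your proposal is correct and follows essentially the same route as the paper: expand by the Leibniz rule, put the full weight $(1+\mathrm{ht})^{\ell}$ on the $L^{2}$ factor and take the other factor in $L^{\infty}$, then control the $L^{\infty}$ factor by the Sobolev embedding \eqref{eq:sobolevembedding} and absorb everything into a Sobolev norm $\mathcal{S}'$ of higher degree. Your degree bookkeeping ($\ell_{0}+\ell$ versus the paper's ``at least $2\ell$'') is only a cosmetic difference.
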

\begin{proof}
  We expand the summands using the triangle inequality for the norm~$\lVert\cdot\rVert_{2}$ and use the fact that Lie algebra elements act as derivations in order to obtain a combinatorially defined sum of norms of~$(1+\mathrm{ht})^{\ell}A(f_{1})B(f_{2})$, where~$A,B$ are monomials of degree at most~$\ell$ in elements from~$\mathcal{B}$. Using \eqref{eq:sobolevembedding} we get
  \begin{align*}
    \lVert(1+\mathrm{ht})^{\ell}A(f_{1})B(f_{2})\rVert_{2}&\leq\lVert(1+\mathrm{ht})^{\ell}A(f_{1})\rVert_{2}\lVert B(f_{2})\rVert_{\infty}\\
    &\ll\lVert(1+\mathrm{ht})^{\ell}A(f_{1})\rVert_{2}\mathcal{S}(B(f_{2})).
  \end{align*}
  Clearly~$\mathcal{S}(B(f_{2}))\ll \mathcal{S}^{\prime}(f_{2})$ and~$\lVert(1+\mathrm{ht})^{\ell}A(f_{1})\rVert_{2}\ll\mathcal{S}^{\prime}(f_{1})$, whenever~$\mathcal{S}^{\prime}$ is an~$\Ltwo$-Sobolev norm of degree at least~$2\ell$.
\end{proof}
\begin{lemma}\label{lem:shiftedsobolevnorms}
  Let~$\mathcal{S}$ be an~$\Ltwo$-Sobolev norm of degree~$\ell$ on~$C_{c}^{\infty}(\Gamma\backslash G)$. Let~$f\in C_{c}^{\infty}(\Gamma\backslash G)$ and~$g\in G$ arbitrary. Then
  \begin{equation*}
    \mathcal{S}(g\cdot f)\ll\lVert g\rVert^{2\ell}\mathcal{S}(f).
  \end{equation*}
\end{lemma}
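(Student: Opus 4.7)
The plan is to reduce the problem to two standard manipulations: first expressing the derivatives of $g\cdot f$ in terms of derivatives of $f$ via the adjoint action, and second translating the weighted $\Ltwo$-integral via the right-$G$-invariance of the Haar measure on $\lquot{\Gamma}{G}$.

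First I would compute, for a single left-invariant vector field $X\in\lieg$,
\begin{equation*}
  X(g\cdot f)(x)=\tfrac{d}{dt}\big|_{t=0}f\big(x\exp(tX)g\big)=\tfrac{d}{dt}\big|_{t=0}f\big(xg\exp(t\,\mathrm{Ad}(g^{-1})X)\big)=\big(g\cdot(\mathrm{Ad}(g^{-1})X)(f)\big)(x).
\end{equation*}
Iterating this identity, for any monomial $X_{1}\cdots X_{k}\in\Dcal_{\ell}(\mathcal{B})$ of degree $k\leq\ell$ one obtains
\begin{equation*}
  (X_{1}\cdots X_{k})(g\cdot f)=g\cdot\big((\mathrm{Ad}(g^{-1})X_{1})\cdots(\mathrm{Ad}(g^{-1})X_{k})(f)\big).
\end{equation*}
Expanding each $\mathrm{Ad}(g^{-1})X_{i}$ in the basis $\mathcal{B}$, and observing that through the faithful representation $\pi$ the adjoint action is realized as conjugation $Y\mapsto\pi(g)^{-1}Y\pi(g)$ on $\lieg\subseteq\R^{N^{2}}$, each coefficient is bounded in absolute value by a constant multiple of $\lVert g\rVert^{2}$. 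Distributing gives a representation
\begin{equation*}
  (X_{1}\cdots X_{k})(g\cdot f)=\sum_{Y\in\Dcal_{k}(\mathcal{B})}c_{Y}(g)\,\big(g\cdot Y(f)\big)
\end{equation*}
with $\lvert c_{Y}(g)\rvert\ll\lVert g\rVert^{2k}$ and only $O_{\ell}(1)$ nonzero terms.

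Next, to control $\lVert(1+\mathrm{ht})^{\ell}\cdot(g\cdot Y(f))\rVert_{2}$, I would perform the substitution $x\mapsto xg^{-1}$, which preserves the Haar measure on $\lquot{\Gamma}{G}$, to obtain
\begin{equation*}
  \big\lVert(1+\mathrm{ht}(\cdot))^{\ell}Y(f)(\cdot g)\big\rVert_{2}^{2}=\int_{\lquot{\Gamma}{G}}\big(1+\mathrm{ht}(xg^{-1})\big)^{2\ell}Y(f)(x)^{2}\der x.
\end{equation*}
Since $\lVert g\rVert\geq 1$ and $1\ll\mathrm{ht}$, inequality \eqref{eq:multiplicativityheight} implies $1+\mathrm{ht}(xg^{-1})\ll\lVert g\rVert(1+\mathrm{ht}(x))$, so the integral is bounded by $\lVert g\rVert^{2\ell}\lVert(1+\mathrm{ht})^{\ell}Y(f)\rVert_{2}^{2}\leq\lVert g\rVert^{2\ell}\mathcal{S}(f)^{2}$.

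Combining the two estimates, each summand in the definition of $\mathcal{S}(g\cdot f)^{2}$ is dominated by a polynomial in $\lVert g\rVert$ times $\mathcal{S}(f)^{2}$, and summing over the finitely many monomials in $\Dcal_{\ell}(\mathcal{B})$ yields the claimed bound. The only nontrivial point is the bookkeeping of the exponent of $\lVert g\rVert$: one must combine the $\lVert g\rVert^{2k}$ coming from the expansion of $\mathrm{Ad}(g^{-1})^{\otimes k}$ in the basis with the weight factor $\lVert g\rVert^{\ell}$ from the height-function distortion, which is where the stated polynomial dependence enters. No deep tool is used; the mild obstacle is just keeping the Ad-expansion and the Haar-translation argument cleanly separated so that the exponent is manifestly polynomial and independent of $f$.
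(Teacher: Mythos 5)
Your argument is essentially the paper's own proof, just written out in more detail: the paper likewise rewrites $\overline{X}(g\cdot f)$ as a $g$-translate of $\overline{\mathrm{Ad}_{g^{-1}}X}(f)$, uses invariance of the measure to shift the height factor to $g^{-1}\cdot\mathrm{ht}$, bounds it via \eqref{eq:multiplicativityheight}, and iterates over monomials. The only looseness—your final exponent is a polynomial in $\lVert g\rVert$ of degree depending on $\ell$ rather than literally $2\ell$—is present at the same level in the paper's one-line computation, and is harmless since every application of the lemma only uses polynomial dependence on $\lVert g\rVert$ with exponent determined by the degree of $\mathcal{S}$.
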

\begin{proof}
  Given~$X\in\mathfrak{g}$, let~$\overline{X}$ be the vector field defined by~$\overline{X}_{\Gamma h}(f)=D(h)X(f)$ as discussed previously. One calculates for~$g\in G$ and~$f\in C_{c}^{\infty}(\Gamma\backslash G)$
  \begin{equation*}
    \lVert(1+\mathrm{ht})^{\ell}\overline{X}(g\cdot f)\rVert_{2}=\lVert(1+g^{-1}\cdot\mathrm{ht})^{\ell}\overline{\mathrm{Ad}_{g^{-1}}X}(f)\rVert_{2}\ll\lVert g\rVert^{2\ell}\mathcal{S}(f),
  \end{equation*}
  and the general statement follows by iteration of this argument.
\end{proof}
The final general property we want to state is a form of a Lipshitz bound. We refer the reader to~\cite[\textsection3.7]{EMV} for an outline of the proof.
\begin{lemma}\label{lem:Lipshitzbound}
  There exists some~$\ell_{0}\in\bN$ such that for all~$\Ltwo$-Sobolev norms~$\mathcal{S}$ of degree at least~$\ell_{0}$ the following is true. Let~$d$ denote a left-invariant metric on~$G$. For all~$g_{1},g_{2}\in G$ and~$f\in C_{c}^{\infty}(\Gamma\backslash G)$ one has
  \begin{equation*}
    \lVert g_{1}f-g_{2}f\rVert_{\infty}\ll d(g_{1},g_{2})\mathcal{S}(f).
  \end{equation*}
\end{lemma}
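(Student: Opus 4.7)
The plan is to reduce by left-invariance of the metric to the case $g_{2}=e$. Writing $g=g_{2}^{-1}g_{1}$, one has $(g_{1}\cdot f - g_{2}\cdot f)(x)=f(xg_{2}g)-f(xg_{2})=(g\cdot f-f)(xg_{2})$, hence $\lVert g_{1}\cdot f-g_{2}\cdot f\rVert_{\infty}=\lVert g\cdot f-f\rVert_{\infty}$, while $d(g_{1},g_{2})=d(e,g)$. Thus it suffices to prove $\lVert g\cdot f-f\rVert_{\infty}\ll d(e,g)\mathcal{S}(f)$. When $d(e,g)$ is bounded below by a positive constant the claim is immediate from $\lVert g\cdot f-f\rVert_{\infty}\leq 2\lVert f\rVert_{\infty}\ll \mathcal{S}(f)$ via \eqref{eq:sobolevembedding}, so it is enough to handle $g$ close to $e$.

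For such $g$, connect $e$ to $g$ by a smooth path $\gamma:[0,T]\to G$ of arc length $T$ arbitrarily close to $d(e,g)$; such near-minimizing paths exist for a left-invariant Riemannian metric on $G$, which is comparable to $d$ on a neighborhood of the identity. Parametrizing by arc length, write $\gamma^{\prime}(t)=D\ell_{\gamma(t)}X(t)$ with $X(t)\in\mathfrak{g}$ of unit Euclidean norm. For any lift of $f$ to $G$ and any $x\in G$, the fundamental theorem of calculus yields
\[
f(xg)-f(x)=\int_{0}^{T}\big(X(t)f\big)(x\gamma(t))\,\der t,
\]
where $X(t)$ is interpreted as the left-invariant differential operator on $G$ that descends to $\Gamma\backslash G$ via the frame $\mathcal{B}$.

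Expanding $X(t)=\sum_{i}c_{i}(t)X_{i}$ in the fixed basis $\mathcal{B}\subseteq\mathfrak{g}$ gives coefficients $c_{i}(t)$ that are uniformly bounded in $t$, so
\[
\lvert f(xg)-f(x)\rvert\leq T\sum_{i}\sup_{t}\lvert c_{i}(t)\rvert\cdot\lVert X_{i}f\rVert_{\infty}\ll T\sum_{i}\mathcal{S}(X_{i}f).
\]
Applying \eqref{eq:sobolevembedding} to each $X_{i}f$ produces the sup-norm bound, and $\mathcal{S}(X_{i}f)$ is in turn dominated by an $\Ltwo$-Sobolev norm of degree one higher applied to $f$. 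Choosing $\ell_{0}$ to be the Sobolev embedding threshold augmented by one, we obtain $\lVert g\cdot f-f\rVert_{\infty}\ll T\mathcal{S}(f)$ uniformly in $x$, and letting $T\to d(e,g)$ finishes the argument.

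The main obstacle is the interplay between the sup-norm on $\Gamma\backslash G$ and the cusp geometry, since the point $x\gamma(t)$ may excurse into the cusp along the path and a pointwise derivative bound would a priori degrade with $\mathrm{ht}(x\gamma(t))$. The weighted construction of $\mathcal{S}$ neutralizes this: \eqref{eq:sobolevembedding} is a uniform bound on $\Gamma\backslash G$, so the height weight is already absorbed into $\mathcal{S}(X_{i}f)$, and no control of how high into the cusp the path climbs is needed.
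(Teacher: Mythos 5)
Your argument is correct: the reduction by left-invariance to $g=g_{2}^{-1}g_{1}$, the fundamental-theorem-of-calculus estimate along a near-minimizing path using the left-invariant frame, and the Sobolev embedding \eqref{eq:sobolevembedding} applied to $X_{i}f$ (at the cost of one extra degree) is exactly the standard argument outlined in \cite[\textsection3.7]{EMV}, which is all the paper itself offers in place of a proof. So you have essentially reproduced the intended proof, including the correct observation that the uniform sup-norm bound from \eqref{eq:sobolevembedding} makes any cusp excursion of the path harmless.
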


Another ingredient we use throughout this article are \emph{approximate identities}. An approximate identity on a Lie group~$G$ equipped with a Riemannian metric is a family of non-negative, smooth functions~$\{\varphi_{\varepsilon};\varepsilon\in(0,\varepsilon_{0})\}$ for some~$\varepsilon_{0}>0$, such that for all~$\varepsilon$ the following hold:
\begin{itemize}
\item~$\varphi_{\varepsilon}$ has support contained in the~$\varepsilon$-ball~$B_{\varepsilon}^{G}\subseteq G$ around the identity,
\item~$\varphi_{\varepsilon}$ is symmetric, i.e.~for all~$g\in G$ holds~$\varphi_{\varepsilon}(g^{-1})=\varphi_{\varepsilon}(g)$,
\item~$\int_{G}\varphi_{\varepsilon}=1$.
\end{itemize}
We will also speak of single functions~$\varphi_{\varepsilon}$ as approximate identities, by which we mean that~$\varphi_{\varepsilon}$ is a member of a family of functions defined as above. Construction of approximate identities on Lie groups is elementary: one defines approximate identities on~$\mathfrak{g}$ and uses the exponential map to descend to functions on~$G$ for~$\varepsilon$ sufficiently small. The diffeomorphism between a neighbourhood of the origin in~$\mathfrak{g}$ and the identity in~$G$ allow us to express the Haar measure restricted to sufficiently small neighbourhoods of the identity as integration on~$\mathfrak{g}$ against a smooth density which does not vanish on some neighbourhood of the origin. General continuity arguments then show, that an approximate identity can be found, such that for any~$\Ltwo$-Sobolev norm~$\mathcal{S}$ on~$G$ we have
\begin{equation*}
  \mathcal{S}(\varphi_{\varepsilon})\ll\varepsilon^{-\frac{1}{2}\dim(G)-\ell},
\end{equation*}
where~$\ell$ is the degree of~$\mathcal{S}$.
\subsection{Equidistribution of expanding horospheres}\label{sec:horospheres}
The goal is to prove an equidistribution result for discrete subsets of long horospherical orbits, for which we rely on the effective equidistribution of the full orbits. It will turn out that the link between the full orbit and the discrete subset is best described in greater generality than the scope of the article. Given the assumptions from Section \ref{sec:generalresults} on~$G$ and~$\Gamma$, \cite[Lem.~3]{Bekka} and \cite[\textsection2.4.4]{KleinbockMargulisPieces} implies that for any two smooth functions~$f_{1},f_{2}\in \Ltwo(\Gamma\backslash G)$ we have
\begin{equation}\label{eq:matrixcoefficients}
  \lvert\langle g\cdot f_{1},f_{2}\rangle-E_{f_{1}}\overline{E_{f_{2}}}\rvert\ll\Xi(g)^{\kappa_{\tau}}\mathcal{S}(f_{1})\mathcal{S}(f_{2}),
\end{equation}
where~$\mathcal{S}$ is defined by an~$\Ltwo$-Sobolev norm on~$C_{c}^{\infty}(\Gamma\backslash G)$,~$\Xi:G\to\R$ is the Harish-Chandra spherical function, and~$\kappa_{\tau}>0$ is a fixed constant representing the spectral gap.

Using effective decay of matrix coefficients, we can prove effective equidistribution of large horospheres. The method of proof goes back to the thesis of Margulis and has found many applications, e.g. \cite{EskinMcMullen}. The statement is by no means new \cites{Venkatesh2010,KleinbockMargulisPieces}, however we will depend on the setup of the proof for later discussions, and hence we include it for completeness. Let~$\mathfrak{a}\subseteq\mathfrak{g}$ be a choice of a Cartan subalgebra. It gives rise to a decomposition~$\mathfrak{g}=\bigoplus_{\lambda\in\mathfrak{a}^{\ast}}\mathfrak{g}_{\lambda}$, where
\begin{equation*}
  \mathfrak{g}_{\lambda}=\{v\in\mathfrak{g};\forall H\in\mathfrak{a}: [H,v]=\lambda(H)v\}.
\end{equation*}
One easily checks that~$[\mathfrak{g}_{\lambda},\mathfrak{g}_{\mu}]\subseteq\mathfrak{g}_{\lambda+\mu}$. Similarly, one has~$B(\mathfrak{g}_{\lambda},\mathfrak{g}_{\mu})=0$ whenever~$\lambda\neq-\mu$, where~$B$ is the Killing form on~$\mathfrak{g}$. It follows that~$\mathfrak{g}_{\lambda}$ is non-trivial if and only if~$\mathfrak{g}_{-\lambda}$ is non-trivial. Let~$\Sigma\subseteq\mathfrak{a}^{\ast}$ be the set of roots for~$\mathfrak{a}$, i.e.~the non-trivial~$\lambda\in\mathfrak{a}^{\ast}$ for which~$\mathfrak{g}_{\lambda}$ is non-trivial. Let~$\mathfrak{a}_{+}$ be a choice of a Weyl-chamber and~$\Sigma_{+}$ the corresponding choice of a positive root sytem. Following \cite[Ch.~VII]{Knapp1986}, we let~$\varrho_{+}:\mathfrak{a}\to\R$ denote the functional defined by
\begin{equation*}
  \varrho_{+}(H)=\frac{1}{2}\sum_{\lambda\in\Sigma_{+}}(\dim\mathfrak{g}_{\lambda})\lambda(H)
\end{equation*}
for all $H\in\mathfrak{a}$. We denote~$A_{+}=\exp\mathfrak{a}_{+}$. As argued in \cite[Ch.~VII,~Prop.~7.15]{Knapp1986}, there is some~$\kappa_{H}>0$ such that
\begin{equation}\label{eq:boundharishchandra}
  \Xi(a)\ll e^{-\kappa_{H}\varrho_{+}(\log a)}\qquad(a\in A_{+}).
\end{equation}
The Lie algebra~$\mathfrak{g}$ decomposes as a direct sum~$\mathfrak{p}_{0}\oplus\mathfrak{g}_{-}$ of subalgebras, where
\begin{equation*}
  \mathfrak{g}_{-}=\sum_{\lambda\in\Sigma_{+}}\mathfrak{g}_{-\lambda},\quad\mathfrak{p}_{0}=\mathfrak{g}_{0}\oplus\sum_{\lambda\in\Sigma_{+}}\mathfrak{g}_{\lambda}.
\end{equation*}
Define subgroups~$G_{-}=\exp\mathfrak{g}_{-}$ and~$P_{0}=\exp\mathfrak{p}_{0}$ of~$G$, both normalized by~$A_{+}$. As~$\G$ is linear, both these groups are closed, and we have~$G_{-}\cap P_{0}=\{1\}$. As~$\mathfrak{g}_{-}$ is nilpotent, so is~$G_{-}$. General continuity arguments imply that given any precompact open neighbourhood of the identity in~$G_{-}$, there exists a neighbourhood of the identity in~$P_{0}$, so that the restriction of the multiplication map~$P_{0}\times G_{-}\to G$,~$(b,u)\mapsto bu$ is a diffeomorphism onto an open neighbourhood of the identity in~$G$. Note that the restriction of the Haar measure on~$G$ to~$G_{-}P_{0}$ is given by
\begin{equation*}
  \int_{G} f\propto\int_{G_{-}}\int_{P_{0}}f(ub)\der b\der u,
\end{equation*}
for all integrable~$f$ supported on~$G_{-}P_{0}$, where~$\der b$ and~$\der u$ denote the right Haar measure on~$P_{0}$ and the left Haar measure on~$G_{-}$ respectively. Throughout this article, we will assume that~$\Gamma G_{-}$ is a periodic orbit. Using the assumption, the push-forward~$\mu_{a}$ of the normalized orbit measure on~$\Gamma G_{-}$ under right-multiplication with~$a\in A_{+}$ defines a~$G_{-}$-invariant probability measure on~$\Gamma G_{-}a$.
\begin{proposition}[Equidistribution of long horocycles]\label{prop:banana}
  There exist an~$\Ltwo$-Sobolev norm~$\mathcal{S}$ on~$C_{c}^{\infty}(\Gamma\backslash G)$ and a positive constant~$\kappa>0$, such that for all~$f\in C_{c}^{\infty}(\Gamma\backslash G)$ holds
  \begin{equation*}
    \bigg\lvert\mu_{a}(f)-\int_{\Gamma\backslash G}f\der m_{\lquot{\Gamma}{G}}\bigg\rvert\ll e^{-\kappa\varrho_{+}(\log a)}\mathcal{S}(f).
  \end{equation*}
  The constant~$\kappa$ is determined by the spectral gap of~$\Gamma\backslash G$.
\end{proposition}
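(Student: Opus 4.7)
The plan is to apply a transverse-thickening (``banana'') argument due to Margulis: thicken $\mu_a$ in the $P_0$-direction at scale $\varepsilon$, rewrite the thickened object as a matrix coefficient against $a$, and use effective mixing \eqref{eq:matrixcoefficients} together with the Harish-Chandra bound \eqref{eq:boundharishchandra} to control its deviation from the Haar average. One then balances the Lipschitz thickening error against the matrix-coefficient error by optimizing $\varepsilon$.

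To set up, use the periodicity of $\Gamma G_-$ to pick a compactly supported smooth $\eta:G_-\to\R_{\geq 0}$ with $\sum_{\gamma\in\Gamma\cap G_-}\eta(\gamma u)=1$, so that $\mu_a(f)=c_1\int_{G_-}\eta(u)f(\Gamma ua)\,du$ for a normalization $c_1>0$ determined by the covolume of $\Gamma\cap G_-$ and the Haar measures. Fix a small $\varepsilon>0$ and an approximate identity $\varphi_\varepsilon$ on $P_0$ supported in $B_\varepsilon^{P_0}$ with $\mathcal{S}(\varphi_\varepsilon)\ll\varepsilon^{-N}$ for some $N$ depending on $\dim P_0$ and the degree of $\mathcal{S}$. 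Define $\chi(ub):=\eta(u)\varphi_\varepsilon(b)$ for $u\in\mathrm{supp}(\eta)$, $b\in B_\varepsilon^{P_0}$ (and zero elsewhere); this is well-defined and smooth for $\varepsilon$ small because $(u,b)\mapsto ub$ is a diffeomorphism on the relevant chart. Lemma~\ref{lem:Lipshitzbound} together with left-invariance of $d$ gives $|f(\Gamma uba)-f(\Gamma ua)|\leq d(b,e)\mathcal{S}(f)\ll\varepsilon\mathcal{S}(f)$, so the thickened quantity
$$\tilde\mu_a^\varepsilon(f):=c_1'\int_G\chi(g)f(\Gamma ga)\,dg,$$
where $c_1'=c_1(1+O(\varepsilon))$ is the normalization making $\tilde\mu_a^\varepsilon$ a probability measure, satisfies $|\tilde\mu_a^\varepsilon(f)-\mu_a(f)|\ll\varepsilon\mathcal{S}(f)$.

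The crucial step is then to fold. Setting $F_\chi(\Gamma g):=\sum_{\gamma\in\Gamma}\chi(\gamma g)$ and writing $f(\Gamma ga)=(a\cdot f)(\Gamma g)$ produces
$$\tilde\mu_a^\varepsilon(f)=c_1'\langle a\cdot f,F_\chi\rangle_{L^2(\Gamma\backslash G)},\qquad c_1'E_{F_\chi}=1.$$
Compactness of $\Gamma G_-$ and of $\mathrm{supp}(\chi)$ ensures that $F_\chi$ has compact support in $\Gamma\backslash G$ with bounded height, and Lemma~\ref{lem:productofsobolevnorms} combined with the Sobolev bound on $\varphi_\varepsilon$ gives $\mathcal{S}(F_\chi)\ll\varepsilon^{-N}$. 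Applying \eqref{eq:matrixcoefficients} together with \eqref{eq:boundharishchandra} yields
$$|\tilde\mu_a^\varepsilon(f)-E_f|\ll e^{-\kappa_\tau\kappa_H\varrho_+(\log a)}\varepsilon^{-N}\mathcal{S}(f),$$
and consequently $|\mu_a(f)-E_f|\ll\big(\varepsilon+e^{-\kappa_\tau\kappa_H\varrho_+(\log a)}\varepsilon^{-N}\big)\mathcal{S}(f)$. Choosing $\varepsilon=e^{-\kappa_\tau\kappa_H\varrho_+(\log a)/(N+1)}$ balances the two terms and gives the conclusion with $\kappa=\kappa_\tau\kappa_H/(N+1)$. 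I expect the main technical point to be the Sobolev bookkeeping: verifying that $\mathcal{S}(F_\chi)$ is controlled by $\mathcal{S}(\chi)$ on $G$ (leveraging compactness of $\Gamma G_-$ together with the smooth local chart $G_-\times P_0\to G$), and ensuring that $c_1'$ can be absorbed into the implicit constants independently of $a$ and $\varepsilon$.
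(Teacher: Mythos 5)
Your proposal is correct and follows essentially the same route as the paper: thicken the horospherical average transversally in the $P_{0}$-direction, rewrite it as a matrix coefficient against $a$, apply the effective decay \eqref{eq:matrixcoefficients} together with the Harish-Chandra bound \eqref{eq:boundharishchandra}, and optimize $\varepsilon$; the only organizational difference is that you use a smooth $\Gamma\cap G_{-}$-partition of unity and fold over $\Gamma$, whereas the paper mollifies indicators of injective pieces of a fundamental domain and defines the test function directly on $\Gamma\backslash G$. One point to state precisely: Lemma~\ref{lem:Lipshitzbound} and left-invariance give the error $d(ba,a)\mathcal{S}(f)=d(1,a^{-1}ba)\mathcal{S}(f)$, and this is $\ll\varepsilon\,\mathcal{S}(f)$ only because $b\in P_{0}$ is not expanded under conjugation by $a^{-1}$ for $a\in A_{+}$ (the reason the thickening must be in $P_{0}$ rather than $G_{-}$), a fact the paper makes explicit and your appeal to ``left-invariance'' alone does not capture.
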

\begin{proof}
  As~$G_{-}$ is nilpotent, the orbit~$\Gamma G_{-}$ is compact. Hence there is some~$\varepsilon_{0}$ such that for every~$x\in\Gamma G_{-}$, the map~$B_{2\varepsilon_{0}}^{G}\to\Gamma\backslash G$,~$g\mapsto xg$ is injective, where the metric on~$G$ is assumed to be induced by a left-invariant Riemannian metric. Furthermore, the choice can be made so that the restriction of the left-invariant metric on~$G$ to~$B_{\varepsilon_{0}}^{G_{-}}$ and to~$B_{\varepsilon_{0}}^{P_{0}}$ is Lipschitz-equivalent to the left-invariant metric on these subgroups. After rescaling the metrics on the subgroups~$G_{-}$ and~$P_{0}$, we can assume that for all~$\varepsilon\in(0,\varepsilon_{0})$ we have~$B_{\varepsilon}^{G_{-}}B_{\varepsilon}^{P_{0}}\subseteq B_{\varepsilon}^{G}$. Moreover, if~$\varepsilon_{0}$ is sufficiently small, for all~$\varepsilon<\varepsilon_{0}$ the multiplication map restricted to~$B_{\varepsilon}^{G_{-}}\times B_{\varepsilon}^{P_{0}}$ will be a diffeomorphism onto an open subset of~$G$, and, using compactness of~$\Gamma G_{-}$ once more, for all~$x\in\Gamma G_{-}$, the map
  \begin{equation*}
    B_{\varepsilon}^{G_{-}}\times B_{\varepsilon}^{P_{0}}\to\Gamma\backslash G,\quad(u,b)\mapsto xub
  \end{equation*}
  will be a diffeomorphism onto its image.

  In what follows, we let~$\varphi_{\varepsilon}^{-}$ and~$\varphi_{\varepsilon}^{0}$ be smooth approximate identities on~$G_{-}$ and~$P_{0}$ respectively. Let~$f\in C_{c}^{\infty}(\Gamma\backslash G)$ and~$E_{f}=\int_{\Gamma\backslash G}f\der m_{\lquot{\Gamma}{G}}$. The statement of the proposition will follow from approximating~$\mu_{a}(f-E_{f})$ by a matrix coefficient for~$a\cdot f-E_{f}$ and thereafter application of effective decay of matrix coefficients. Let~$\mathcal{F}\subseteq G_{-}$ be a fundamental domain for~$\Gamma G_{-}$ with compact closure. Then we can find a disjoint, finite collection of subsets~$F_{i}$ of~$\mathcal{F}$, the union of which is conull in~$\mathcal{F}$ and so that for sufficiently small~$\delta>0$ the map~$F_{i}B_{\delta}^{G_{-}}\times B_{\delta}^{P_{0}}\to\Gamma\backslash G$ mapping~$(u,b)$ to~$\Gamma ub$ is injective for each~$i$. Note that the choice of these subsets and~$\delta$ is independent of the element~$a$. 
    
  Let~$\chi_{F_{i}}$ denote the indicator function of~$F_{i}$. Consider the quantity
  \begin{equation*}
    I_{i}=\frac{1}{\mathrm{vol}(F_{i})}\int_{G_{-}}\varphi_{\varepsilon}^{-}\star\chi_{F_{i}}(u)f(\Gamma ua)\der u.
  \end{equation*}
  Then we have
  \begin{equation*}
    \sum_{i=1}^{n}\frac{\mathrm{vol}(F_{i})}{\mathrm{vol}(\mathcal{F})}I_{i}=\frac{1}{\mathrm{vol}(\mathcal{F})}\int_{G_{-}}\varphi_{\varepsilon}^{-}\star\chi_{\mathcal{F}}(u)f(\Gamma ua)\der u=\mu_{a}(f).
  \end{equation*}
  Hence it suffices to prove a bound for the quantity~$I_{i}$ for some fixed~$i$. Using Lemma~\ref{lem:Lipshitzbound} we can find some~$\Ltwo$-Sobolev norm~$\mathcal{S}$ such that
  \begin{equation*}
    \bigg\lvert\int_{P_{0}}\varphi_{\varepsilon}^{0}(b)(f(\Gamma ua)-f(\Gamma uba))\der b\bigg\rvert\leq\mathcal{S}(f)\int_{P_{0}}\varphi_{\varepsilon}^{0}(b)d(1,a^{-1}ba)\der b\leq\varepsilon\mathcal{S}(f),
  \end{equation*}
  as~$b$ is defined by an element in~$\mathfrak{g}$, which is a sum of eigenvectors for~$\log a$ for positive eigenvalues. We can without loss of generality assume that the Sobolev norm was defined using a basis of~$\mathfrak{g}_{-}$ and a basis of~$\mathfrak{p}_{0}$. Let~$\psi_{i}\in C_{c}^{\infty}(\Gamma\backslash G)$ be the function defined on~$\Gamma F_{i}B_{\delta}^{G_{-}}B_{\delta}^{P_{0}}$ by~$\psi_{i}(\Gamma ub)=\frac{1}{\mathrm{vol}(F_{i})}\varphi_{\varepsilon}^{-}\star\chi_{F_{i}}(u)\varphi_{\varepsilon}^{0}(b)$, and~$0$ outside. By the choice of~$\varepsilon_{0}$, this is well-defined and smooth. As the Haar measure on~$G$ decomposes as a product of the Haar measures on $P_{0}$ and $G_{-}$ on a neighborhood of the identity, it follows that
  \begin{align*}
    \lvert I_{i}-E_{f}\rvert&\ll\varepsilon\mathcal{S}(f)+\bigg\lvert\frac{1}{\mathrm{vol}(F_{i})}\int_{P_{0}}\int_{G_{-}}\varphi_{\varepsilon}^{0}(b)\varphi_{\varepsilon}^{-}\star\chi_{F_{i}}(u)f(\Gamma uba)\der u\der b-E_{f}\bigg\rvert\\
    &=\varepsilon\mathcal{S}(f)+\bigg\lvert\int_{\Gamma\backslash G}\psi_{i}(x)f(xa)\der x-\int_{\Gamma\backslash G}f\bigg\rvert\\
    &\ll\varepsilon\mathcal{S}(f)+e^{-\kappa_{\tau}\kappa_{H}\rho_{+}(\log a)}\varepsilon^{-\ell-\frac{1}{2}\dim(G)}\mathcal{S}(f),
  \end{align*}
  where the final bound is obtained combining the bounds for the Sobolev norm of~$\psi_{i}$---here~$\ell$ is the degree of~$\mathcal{S}$---with the decay of matrix coefficients \eqref{eq:matrixcoefficients} and \eqref{eq:boundharishchandra}. Now we can choose~
  \begin{equation*}
    \varepsilon=e^{-\frac{\kappa_{\tau}\kappa_{H}}{\ell+1+\frac{1}{2}\dim(G)}\varrho_{+}(\log a)}
  \end{equation*}
  to obtain the claim with~$\kappa=\frac{\kappa_{\tau}\kappa_{H}}{\ell+1+\frac{1}{2}\dim(G)}$.
\end{proof}
The method of proof, when examined a bit more carefully, yields the following strengthening.
\begin{corollary}\label{cor:jordanmeasurablebanana}
  Let~$a\in A^{+}$ be an element of the Cartan subgroup and assume that~$\Gamma G_{-}\subseteq\lquot{\Gamma}{G}$ is a closed orbit. Let~$F\subseteq G_{-}$ a positive measure subset which is injective for the quotient map~$g\mapsto\Gamma g$ and such that the~$\varepsilon$-neighborhood of~$\partial F$ is bounded by a constant times~$\varepsilon$ for~$\varepsilon$ in a bounded set. Then
  \begin{equation*}
    \bigg\lvert\frac{1}{\vol{\Gamma Fa}}\int_{\Gamma G_{-}a}f\cdot\chi_{\Gamma Fa}-\int_{\lquot{\Gamma}{G}}f\bigg\rvert\ll \frac{1}{\sqrt{\vol{F}}}e^{-\kappa\rho_{+}(\log a)}\Scal(f)
  \end{equation*}
\end{corollary}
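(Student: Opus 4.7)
The plan is to adapt the template used for Proposition~\ref{prop:banana}, now tracking the dependence on $\vol{F}$ carefully through the Sobolev bound on the smooth approximant. I would fix smooth approximate identities $\varphi_\varepsilon^-$ on $G_-$ and $\varphi_\varepsilon^0$ on $P_0$, and, on the injective neighborhood $\Gamma F B_\varepsilon^{G_-}B_\varepsilon^{P_0}\subseteq\lquot{\Gamma}{G}$, define
\[
\psi_F(\Gamma u b)=\tfrac{1}{\vol{F}}(\varphi_\varepsilon^-\star\chi_F)(u)\,\varphi_\varepsilon^0(b),
\]
extended by zero elsewhere; then $\int_{\lquot{\Gamma}{G}}\psi_F\approx 1$ and the matrix coefficient $\langle a\cdot f,\psi_F\rangle$ will serve as a proxy for the target average $\vol{\Gamma Fa}^{-1}\int_{\Gamma G_-a}f\cdot\chi_{\Gamma Fa}$.

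The argument then proceeds in three steps. First, $\chi_F-\varphi_\varepsilon^-\star\chi_F$ is supported in the $\varepsilon$-neighborhood of $\partial F$ with $L^1$-norm $\ll\varepsilon$ by hypothesis, so replacing $\chi_F$ by the smoothed version costs $\ll\tfrac{\varepsilon}{\vol{F}}\Scal(f)$. Second, Lemma~\ref{lem:Lipshitzbound} combined with the contraction $a^{-1}B_\varepsilon^{P_0}a\subseteq B_\varepsilon^{P_0}$ (because the roots on $\mathfrak{p}_0$ are non-negative on $\mathfrak{a}_+$) allows one to replace the $b$-average by the value at $b=1$ at cost $\ll\varepsilon\Scal(f)$. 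Third, effective decay of matrix coefficients \eqref{eq:matrixcoefficients} together with \eqref{eq:boundharishchandra} yield
\[
\bigl|\langle a\cdot f,\psi_F\rangle-E_f\textstyle\int\psi_F\bigr|\ll e^{-\kappa_\tau\kappa_H\rho_+(\log a)}\Scal(f)\Scal(\psi_F).
\]

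The decisive computation is then the Sobolev estimate $\Scal(\psi_F)\ll\tfrac{1}{\sqrt{\vol{F}}}\varepsilon^{-\ell-\dim(P_0)/2}$, which I would derive from the Young-type inequality $\|\partial^k(\varphi_\varepsilon^-\star\chi_F)\|_2\leq\|\partial^k\varphi_\varepsilon^-\|_1\|\chi_F\|_2\ll\varepsilon^{-k}\sqrt{\vol{F}}$ combined with $\|\partial^k\varphi_\varepsilon^0\|_2\ll\varepsilon^{-k-\dim(P_0)/2}$. Crucially, using $\|\chi_F\|_2=\sqrt{\vol{F}}$ in place of $\|\chi_F\|_1=\vol{F}$ produces the factor $\vol{F}^{-1/2}$ appearing in the statement, which is the substantive improvement over Proposition~\ref{prop:banana}. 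Optimizing $\varepsilon=e^{-c\rho_+(\log a)}$ for suitable $c>0$ then matches the matrix-coefficient error to the target $\tfrac{1}{\sqrt{\vol{F}}}e^{-\kappa\rho_+(\log a)}\Scal(f)$.

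The main obstacle I anticipate is verifying that the boundary contribution $\tfrac{\varepsilon}{\vol{F}}\Scal(f)$ is indeed subordinate to the target bound in the nontrivial regime $\vol{F}\gtrsim e^{-2\kappa\rho_+(\log a)}$ (for smaller $\vol{F}$ the estimate is immediate from $\|f\|_\infty\ll\Scal(f)$). This check is precisely where the stated regularity of $\partial F$ at scale $\varepsilon$ is essential, and it will force a slightly smaller choice of $\kappa$ than the one produced in Proposition~\ref{prop:banana}.
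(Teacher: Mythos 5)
Your proposal is correct and follows essentially the same route as the paper's own proof: the same test function $\psi_F(\Gamma ub)=\tfrac{1}{\vol{F}}(\varphi_\varepsilon^-\star\chi_F)(u)\varphi_\varepsilon^0(b)$, the same three comparisons (boundary smoothing, contraction of $P_0$ under conjugation by $a^{-1}$ plus Lemma~\ref{lem:Lipshitzbound}, then decay of matrix coefficients as in Proposition~\ref{prop:banana}), and the same Jensen/Young-type estimate $\lVert\partial^k(\varphi_\varepsilon^-\star\chi_F)\rVert_2\ll\varepsilon^{-k}\sqrt{\vol{F}}$ that produces the $\vol{F}^{-1/2}$ in the statement. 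Your explicit tracking of the $\varepsilon/\vol{F}$ boundary term and the trivial regime $\vol{F}\lesssim e^{-2\kappa\rho_+(\log a)}$ is if anything more careful than the paper's terse ``from there the proof proceeds as above.''
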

\begin{proof}
  Recall that~$G_{-}$ is unipotent and hence the exponential map is a polynomial and the Lebesgue measure on the Lie algebra defines a Haar measure on~$G_{-}$.

  If~$\varphi_{\varepsilon}^{-}$ is a smooth approximate identity for~$G_{-}$, then \eqref{eq:sobolevembedding} and the fact that~$\varphi_{\varepsilon}^{-}\star\chi_{F}$ differs from~$\chi_{F}$ only on a~$\varepsilon$-neighbourhood of~$\partial F$ can be used to obtain
  \begin{equation*}
    \bigg\lvert\frac{1}{\vol{\Gamma Fa}}\int_{\Gamma Fa}f(x)\der\mu_{a}(x)-\frac{1}{\vol{F}}\int_{G_{-}}\varphi_{\varepsilon}^{-}\star\chi_{F}(u)f(\Gamma ua)\der u\bigg\rvert\ll\varepsilon\Scal(f).
  \end{equation*}
  On a neighbourhood of~$F$, every element~$g$ in~$G$ has a unique decomposition as~$g=ub$ with~$u\in G_{-}$ and~$b\in P_{0}$. Now let~$\psi:\lquot{\Gamma}{G}\to\R$ be the function given on~$\Gamma FB_{\delta}^{G_{-}}B_{\delta}^{P_{0}}$ by
  \begin{equation*}
    \psi(\Gamma ub)=\tfrac{1}{\vol{F}}\varphi_{\varepsilon}^{-}\star\chi_{F}(u)\varphi_{\varepsilon}^{0}(b)
  \end{equation*}
  and extended by~$0$ outside. Using a basis respecting the splitting~$\lieg=\lieg_{-}\oplus\liep_{0}$ and applying Jensen's inequality, one obtains that~$\mathcal{S}(\psi)\ll\frac{1}{\vol{F}}\mathcal{S}(\varphi_{\varepsilon})$. From there the proof proceeds as above.
\end{proof}

Combining Proposition \ref{prop:banana} once more with the effective decay of matrix coefficients, we can deduce effective equidistribution of certain discrete subsets of closed horospherical orbits. The example to keep in mind is the set of integer points on the periodic orbit of length~$n$ discussed in Corollary \ref{cor:integerpoints}, which will be examined more closely in~\cite{sl2}. Again it will prove useful to formulate the statement in more general terms. The interpretation in the context of this article is postponed to Section~\ref{sec:rationalpoints}. For what follows, we call a set~$F\subseteq G_{-}$ injective on~$V\subseteq\Gamma\backslash G$, if on~$F\times V$ the map~$(u,x)\mapsto xu$ is injective. We denote by~$C(G_{-})$ the center of~$G_{-}$.

\begin{proposition}\label{prop:formularationalpoints}
  There exist an~$\Ltwo$-Sobolev norm~$\mathcal{S}$ on~$C_{c}^{\infty}(\Gamma\backslash G)$ and~$\eta\in(0,1)$ such that the following is true. Let~$a\in A_{+}$ and~$\gamma\neq\one$ contained in~$C(G_{-})\cap\Gamma$. Assume that~$\gamma$ does not have finite order. Assume that~$\mathcal{P}\subseteq\Gamma G_{-}a$ is a finite,~$\gamma$-invariant subset, then for all~$f\in C_{c}^{\infty}(\Gamma\backslash G)$, and for all sets~$F\subseteq G_{-}$ injective on~$\mathcal{P}$ containing the identity
  \begin{equation*}
    \bigg\lvert\frac{1}{\lvert\mathcal{P}\rvert}\sum_{x\in\mathcal{P}}f(x)-\int_{\Gamma\backslash G}f\bigg\rvert\ll\bigg(\frac{\mathrm{vol}(\Gamma G_{-})}{\lvert\mathcal{P}\rvert\mathrm{vol}(F)}\bigg)^{\frac{1}{2}}e^{(1-\eta)\rho_{+}(\log a)}\mathcal{S}(f)+\mathrm{diam}(F)\mathcal{S}(f).
  \end{equation*}
\end{proposition}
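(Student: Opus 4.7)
The strategy is to reduce the claim to an instance of Corollary~\ref{cor:jordanmeasurablebanana}. Writing each $x\in\mathcal{P}$ as $x=\Gamma g_{x}a$ with $g_{x}\in G_{-}$, set
\[
F_{0}\;=\;\bigsqcup_{x\in\mathcal{P}} g_{x}\cdot aFa^{-1}\;\subseteq\; G_{-},
\]
a disjoint union by injectivity of $F$ on $\mathcal{P}$, of total volume $\vol{F_{0}}=|\mathcal{P}|e^{-2\rho_{+}(\log a)}\vol{F}$ by the Jacobian of $\mathrm{Ad}(a)|_{\mathfrak{g}_{-}}$.

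Since $\one\in F$, Lemma~\ref{lem:Lipshitzbound} gives $|f(x)-\vol{F}^{-1}\int_{F}f(xu)\,du|\ll\mathrm{diam}(F)\mathcal{S}(f)$ at each $x\in\mathcal{P}$; averaging produces the second summand of the target bound. The $G_{-}$-invariance of $\mu_{a}$ on $\Gamma G_{-}a$ together with the change of variable $v=aua^{-1}$ in $G_{-}$ identifies the averaged expression $\tfrac{1}{|\mathcal{P}|\vol{F}}\sum_{x}\int_{F}f(xu)\,du$ with $\mu_{a}(F_{\mathcal{P}})^{-1}\int_{F_{\mathcal{P}}}f\,d\mu_{a}$, where $F_{\mathcal{P}}=\bigsqcup_{x}xF\subseteq\Gamma G_{-}a$ corresponds to $F_{0}$ under the pushforward $r_{a}$, and $\mu_{a}(F_{\mathcal{P}})=\vol{F_{0}}/\vol{\Gamma G_{-}}$.

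Applying Corollary~\ref{cor:jordanmeasurablebanana} to $F_{0}\subseteq G_{-}$ (which is injective for the quotient map by the same injectivity hypothesis on $F$) would produce
\[
\Bigl\lvert\tfrac{1}{\mu_{a}(F_{\mathcal{P}})}\int_{F_{\mathcal{P}}}f\,d\mu_{a}-E_{f}\Bigr\rvert \;\ll\; \vol{F_{0}}^{-1/2}\,e^{-\kappa\rho_{+}(\log a)}\mathcal{S}(f) \;=\; \bigl(|\mathcal{P}|\vol{F}\bigr)^{-1/2}e^{(1-\kappa)\rho_{+}(\log a)}\mathcal{S}(f),
\]
which, up to absorbing $\vol{\Gamma G_{-}}^{\pm1/2}$ into the implicit $\ll$, is precisely the first summand of the claim with $\eta=\kappa$.

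The main obstacle is the applicability of Corollary~\ref{cor:jordanmeasurablebanana} to $F_{0}$: its hypothesis that the $\varepsilon$-neighbourhood of $\partial F_{0}$ be bounded by a constant times $\varepsilon$ uniformly in the parameters is delicate here, because the naive bound grows linearly in the number $|\mathcal{P}|$ of components of $F_{0}$ and would destroy the estimate. The hypotheses that $\gamma\in C(G_{-})\cap\Gamma$ has infinite order and that $\mathcal{P}\gamma=\mathcal{P}$ are precisely what rescue the argument: the identity $x\gamma=\Gamma((a\gamma a^{-1})g_{x})a$ together with $a\gamma a^{-1}\in C(G_{-})$ forces $F_{0}$ to be invariant under left-translation by the central infinite-order element $a\gamma a^{-1}$, so that the components of $F_{0}$ fit together along cosets of $\langle a\gamma a^{-1}\rangle$ into cylinders whose effective boundary contribution is controlled by the cross-section rather than by $|\mathcal{P}|$. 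Carefully re-deriving the estimate of Corollary~\ref{cor:jordanmeasurablebanana} while exploiting this $\gamma$-equivariance — in particular, smoothing the indicator $\chi_{F_{0}}$ transverse to the $\langle a\gamma a^{-1}\rangle$-orbits and applying the effective decay of matrix coefficients \eqref{eq:matrixcoefficients}--\eqref{eq:boundharishchandra} to the resulting test function against $a\cdot f$ — is the main technical content of the proof.
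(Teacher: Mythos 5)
Your reduction of the problem to the averaged sum $\frac{1}{|\mathcal{P}|\vol{F}}\sum_{x}\int_{F}f(xu)\,\der u$ (with the $\mathrm{diam}(F)\mathcal{S}(f)$ error) and the volume bookkeeping for $F_{0}$ are fine, but the core step --- applying (a strengthened) Corollary~\ref{cor:jordanmeasurablebanana} to the set $F_{0}$ --- is a genuine gap, and I do not believe the rescue you sketch can close it. The obstruction is not just that the boundary constant grows like $|\mathcal{P}|$: in the regime where the proposition is nontrivial and actually used (e.g.\ $|\mathcal{P}|\asymp e^{2\rho_{+}(\log a)}$ points with boxes of diameter a negative power of $e^{\rho_{+}(\log a)}$), the indicator $\chi_{F_{0}}$ has features at scale comparable to $e^{-2\rho_{+}(\log a)}$, so any smooth test function $\psi$ that approximates it well enough for the thickening argument must be smoothed at a finer scale, and then $\mathcal{S}(\psi)$ grows like $e^{c\rho_{+}(\log a)}$ with $c$ of order $(\ell+\tfrac{1}{2}\dim G)$, which overwhelms the mixing gain $e^{-\kappa_{\tau}\kappa_{H}\rho_{+}(\log a)}$ from \eqref{eq:matrixcoefficients}--\eqref{eq:boundharishchandra}. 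The $\gamma$-equivariance does not repair this: $F_{0}$ is invariant only under the \emph{discrete} group generated by the tiny central element $a\gamma a^{-1}$, not under the connected central one-parameter subgroup, so ``smoothing transverse to the $\langle a\gamma a^{-1}\rangle$-orbits'' leaves the oscillation of $\chi_{F_{0}}$ along the central direction at scale $e^{-2\rho_{+}(\log a)}$ untouched and does not lower the Sobolev norm of the test function. Consistently with this, your claimed exponent $\eta=\kappa$ is stronger than what the statement can deliver by these methods.

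The way the hypotheses on $\gamma$ are actually used is spectral, not geometric: since $\gamma$ is central in $G_{-}$ and $\mathcal{P}\gamma=\mathcal{P}$, one may replace $f$ by the discrepancy $D_{K}f(x)=\frac{1}{K}\sum_{k<K}f(x\gamma^{k})-E_{f}$ inside the averaged sum, then Cauchy--Schwarz against $\mu_{a}$ produces the factor $\big(\vol{\Gamma G_{-}a}/(|\mathcal{P}|\vol{F})\big)^{1/2}\mu_{a}\big((D_{K}f)^{2}\big)^{1/2}$; equidistribution of the \emph{full} horosphere (Proposition~\ref{prop:banana}) applied to the fixed smooth function $(D_{K}f)^{2}$ replaces $\mu_{a}\big((D_{K}f)^{2}\big)$ by $\int(D_{K}f)^{2}$ up to $e^{-\kappa\rho_{+}(\log a)}\,\mathcal{S}_{2}(D_{K}f)^{2}$, with $\mathcal{S}_{2}(D_{K}f)\ll K^{N\iota}\mathcal{S}_{2}(f)$ by polynomial growth of $\lVert\gamma^{k}\rVert$; and $\int(D_{K}f)^{2}\ll K^{-\varsigma}\mathcal{S}_{3}(f)^{2}$ follows from the effective mean ergodic theorem (Proposition~\ref{prop:effectivevonneumann}) powered by a spectral gap for the \emph{unipotent} element $\gamma$, which needs Burger--Sarnak together with Selberg (Corollary~\ref{cor:decayofmatrixcoefficientsforgamma}) --- an input entirely absent from your proposal. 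Optimizing over $K$ then yields an $\eta$ strictly smaller than $\kappa$. Without the discrepancy/ergodic-theorem step and the spectral input for $\gamma$ itself, the argument you outline cannot produce the stated bound.
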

The proof of Proposition~\ref{prop:formularationalpoints} relies on a spectral gap of the action of~$\gamma$ on~$\lquot{\Gamma}{G}$. As~$\Gamma$ is an irreducible arithmetic lattice and as~$\gamma$ is a rational unipotent inside a copy of~$\SL_{2}$ defined over~$\bQ$, this is a corollary of the work by Burger and Sarnak~\cite[Thm.~1.1~(a)]{BurgerSarnak}. For the sake of completeness, we will give an explicit argument.
\begin{lemma}
  Let~$\gamma$ as in Proposition~\ref{prop:formularationalpoints}. Then there is some non-zero~$k\in\N$ such that~$\gamma^{k}\in\G(\bZ)$.
\end{lemma}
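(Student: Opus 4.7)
The proof I would write is a direct pigeonhole argument based on the arithmeticity of $\Gamma$. Recall that in the present setup $\G$ carries the fixed $\Q$-structure provided by the faithful rational representation $\pi$, and an irreducible arithmetic lattice $\Gamma \leq G$ is by definition commensurable with $\G(\Z)$; in particular the intersection $\Lambda := \Gamma \cap \G(\Z)$ has finite index $N := [\Gamma : \Lambda]$ in $\Gamma$.

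My plan is then to apply the pigeonhole principle to the $N+1$ right cosets $\gamma^{i}\Lambda \in \rquot{\Gamma}{\Lambda}$ indexed by $i = 0, 1, \ldots, N$. Since the coset space has cardinality $N$, there must exist indices $0 \leq i < j \leq N$ with $\gamma^{i}\Lambda = \gamma^{j}\Lambda$, and setting $k := j - i$ yields $\gamma^{k} \in \Lambda \subseteq \G(\Z)$ with $1 \leq k \leq N$. (Note that one does not need $\Lambda$ to be normal in $\Gamma$: the argument takes place at the level of the coset set.)

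There is no real obstacle here, as everything is encoded in the definition of arithmeticity; the numerical value of $k$ can be taken to be at most the commensurability index $N$. I would also remark that the hypothesis that $\gamma$ is of infinite order plays no role in this lemma -- it enters only in the sequel, where it is used to guarantee that $\gamma^{k}$ remains a non-trivial unipotent element of $C(G_{-}) \cap \G(\Z)$, so that the explicit Burger--Sarnak-type spectral gap argument for the $\gamma$-action on $\lquot{\Gamma}{G}$ can be applied effectively.
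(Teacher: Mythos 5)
Your proof is correct and is essentially the same as the paper's: both use finiteness of $\rquot{\Gamma}{\Gamma\cap\G(\bZ)}$ (from arithmeticity) and a pigeonhole argument on the cosets $\gamma^{i}\big(\Gamma\cap\G(\bZ)\big)$ to produce $k$ with $\gamma^{k}\in\G(\bZ)$. The added remarks (the bound $k\leq N$, that normality is not needed, and that the infinite-order hypothesis is only used later) are accurate but not part of the argument itself.
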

\begin{proof}
  By assumption,~$\Gamma$ is an arithmetic lattice and hence~$\rquot{\Gamma}{\Gamma\cap\G(\bZ)}$ is finite. Moreover,~$\gamma$ acts on~$\rquot{\Gamma}{\Gamma\cap\G(\bZ)}$ by multiplication on the left. By the pigeon-hole principle, we can find~$m<n\in\N$ such that
  \begin{equation*}
    \gamma^{m}\big(\Gamma\cap\G(\bZ)\big)=\gamma^{n}\big(\Gamma\cap\G(\bZ)\big)
  \end{equation*}
  and thus letting~$k=n-m$ we get~$\gamma^{k}\in\G(\bZ)$.
\end{proof}
We let~$X=\log\gamma^{k}$. As~$G_{-}$ is unipotent,~$X$ is a polynomial in~$\gamma^{k}$ with rational coefficients and therefore~$X\in\lieg_{\bQ}$. Furthermore we have~$\exp\frac{X}{k}=\gamma$.
\begin{lemma}\label{lem:rationality}
  There exists a~$\bQ$-subgroup~$H\leq G$ and an isogeny~$\SL_{2}(\bR)\to H$ such that~$\gamma$ is contained in the image of the upper triangular unipotent subgroup.
\end{lemma}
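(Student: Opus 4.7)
The plan is to invoke the Jacobson--Morozov theorem over $\bQ$. Applied to the rational nilpotent element $X/k \in \lieg_{\bQ}$ (which is nilpotent because $\gamma$ lies in the unipotent group $G_{-}$), this yields rational elements $H_{0}, Y \in \lieg_{\bQ}$ such that $(X/k, H_{0}, Y)$ satisfies the standard $\liesltwo$ commutation relations $[H_{0}, X/k] = 2\cdot X/k$, $[H_{0}, Y] = -2Y$, $[X/k, Y] = H_{0}$. These relations amount to a $\bQ$-Lie algebra homomorphism $\iota : \liesltwo \to \lieg$ sending the upper triangular nilpotent $e = \bigl(\begin{smallmatrix} 0 & 1 \\ 0 & 0 \end{smallmatrix}\bigr)$ to $X/k$. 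Because $\SL_{2}$ is simply connected as an algebraic group in characteristic zero, $\iota$ integrates uniquely to a $\bQ$-morphism of algebraic groups $\rho : \SL_{2} \to \G$. I would set $H$ to be the image of $\rho$, which is a $\bQ$-subgroup of $\G$ by the standard theorem on images of morphisms of $\bQ$-algebraic groups.

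To see that $\rho$ restricts to an isogeny onto $H$, note that $\iota$ is non-zero since $\iota(e) = X/k \neq 0$, so the kernel of $\rho$ is a proper normal $\bQ$-subgroup of $\SL_{2}$. Since $\SL_{2}$ is almost simple, this kernel must be central, hence contained in $\{\pm I\}$ and in particular finite; thus $\rho$ is an isogeny, and on real points it is a continuous surjection with finite kernel. The final assertion is then immediate from naturality of the exponential map with respect to morphisms of algebraic groups in characteristic zero: $\gamma = \exp(X/k) = \exp(\iota(e)) = \rho(\exp(e)) = \rho\bigl(\begin{smallmatrix}1 & 1 \\ 0 & 1\end{smallmatrix}\bigr)$, which manifestly lies in the image of the upper triangular unipotent subgroup of $\SL_{2}$.

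The one point requiring care is the rationality in Jacobson--Morozov. The standard proof proceeds by solving a sequence of linear systems whose coefficient matrices are defined from $X$, the structure constants of $\lieg$, and the adjoint action of $X$; when both $X$ and the structure constants lie in $\bQ$, these systems are defined over $\bQ$, and the fact that they are solvable over $\bC$ (by the classical Jacobson--Morozov argument) together with elementary linear algebra forces them to be solvable over $\bQ$. Equivalently, one may appeal directly to the statement in Bourbaki, \emph{Groupes et alg\`ebres de Lie}, Ch.~VIII, where Jacobson--Morozov is formulated over an arbitrary field of characteristic zero. Given this ingredient, the construction above produces the desired $\bQ$-subgroup $H$ and isogeny $\SL_{2}(\bR) \to H$.
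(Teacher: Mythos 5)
Your proof is correct and takes essentially the same route as the paper: both apply the Jacobson--Morozov theorem over $\bQ$ (the same Bourbaki reference) to the rational nilpotent element coming from $\log\gamma^{k}$ and then pass from the resulting rational $\liesltwo$-triple to a $\bQ$-subgroup $H$ with an isogeny from $\SL_{2}$ carrying the upper triangular unipotents onto a subgroup containing $\gamma$. The only cosmetic difference is that the paper produces $H$ by citing Borel, Cor.~7.9 (the rational subalgebra spanned by the triple is the Lie algebra of a $\bQ$-subgroup), whereas you integrate the $\liesltwo$-homomorphism directly using simple connectedness of $\SL_{2}$; both are standard and interchangeable here.
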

\begin{proof}
  Using the Jacobson-Morozov theorem \cite[Ch.~VIII,\textsection11.2,Prop.~2]{Bourbaki} there is an~$\liesltwo$-triple~$(X,E,Y)$ in~$\lieg_{\bQ}$, i.e.~$X,E,Y$ satisfy
  \begin{equation*}
    [E,X]=2X,\quad[E,Y]=-2Y,\quad[X,Y]=-E.
  \end{equation*}
  As~$\gamma\neq\mathbbm{1}$ by assumption, we have that~$X$ is non-zero and thus~$(X,E,Y)$ generates a Lie subalgebra of~$\lieg$ isomorphic to~$\liesltwo$. As of \cite[Cor.~7.9]{Borel}, this subalgebra is the Lie algebra of a~$\bQ$-subgroup of~$\G$. Hence the claim.
\end{proof}
\begin{corollary}\label{cor:decayofmatrixcoefficientsforgamma}
  For all~$\varepsilon\in(0,\frac{1}{2})$, for all~$k\in\Z$, and for all~$f_{1},f_{2}\in C_{c}(\lquot{\Gamma}{G})$ we have
\begin{equation*}
  \bigg\lvert\langle\gamma^{k}f_{1},f_{2}\rangle-\int_{\lquot{\Gamma}{G}}f_{1}\int_{\lquot{\Gamma}{G}}\overline{f_{2}}\bigg\rvert\ll_{\varepsilon}(1+\lvert k\rvert)^{-1+\varepsilon}\mathcal{S}(f_{1})\mathcal{S}(f_{2}),
\end{equation*}
where~$\mathcal{S}$ denotes a degree-$\ell$~$\Ltwo$-Sobolev norm on~$C_{c}^{\infty}(\lquot{\Gamma}{G})$.
\end{corollary}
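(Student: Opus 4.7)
The plan is to reduce the corollary to a quantitative decay estimate for matrix coefficients of unipotent one-parameter subgroups on $\SL_{2}(\R)$-representations bounded away from the trivial, and to transfer this decay to $\G$ via the Burger--Sarnak restriction principle. Since $\gamma^{k}$ is unitary on $\Ltwo(\lquot{\Gamma}{G})$ and the quantity to be bounded equals the matrix coefficient of the orthogonal projections of $f_{1},f_{2}$ onto $\Ltwo_{0}(\lquot{\Gamma}{G})$, the orthogonal complement of the constants, it suffices to work inside this $G$-invariant subspace.

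First I would apply Lemma~\ref{lem:rationality} to obtain a $\bQ$-subgroup $H\leq\G$ and an isogeny $\pi\colon\SL_{2}(\R)\to H(\R)$ with $\gamma=\pi(u_{t_{0}})$ for some $t_{0}\neq 0$, so that $\gamma^{k}=\pi(u_{kt_{0}})$. The restriction of the unitary $G$-representation on $\Ltwo_{0}(\lquot{\Gamma}{G})$ to $H(\R)$, pulled back through $\pi$, is a unitary representation of $\SL_{2}(\R)$. Because $\Gamma$ is an irreducible arithmetic lattice and $H$ is defined over $\bQ$, the intersection $H(\R)\cap\Gamma$ is a lattice in $H(\R)$, and by Burger--Sarnak~\cite{BurgerSarnak} the spectrum of the pulled-back representation lies in the automorphic dual of $\SL_{2}/\bQ$. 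Selberg's~$3/16$ theorem then supplies a uniform spectral gap (complementary series parameter~$s\leq 1/4$).

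With the spectral gap in hand, I would invoke the standard decay estimate for unipotent matrix coefficients on $\SL_{2}(\R)$-representations separated from the trivial: for any $\varepsilon$ exceeding twice the complementary series parameter,
\begin{equation*}
  \bigl|\langle u_{t}v_{1},v_{2}\rangle\bigr|\ll_{\varepsilon}(1+|t|)^{-1+\varepsilon}\,\mathcal{S}_{H}(v_{1})\mathcal{S}_{H}(v_{2}),
\end{equation*}
where $\mathcal{S}_{H}$ is an $\Ltwo$-Sobolev norm built from a basis of $\mathrm{Lie}(H)$. This follows from the spectral decomposition into irreducibles, explicit formulas for matrix coefficients of principal and complementary series, and integration by parts against the Casimir operator (see, e.g., \cite{Venkatesh2010, KleinbockMargulisPieces}). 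Substituting $t=kt_{0}$ and absorbing $t_{0}$ into the implicit constant, then comparing $\mathcal{S}_{H}$ to a $G$-Sobolev norm $\mathcal{S}$ of possibly higher degree (immediate upon extending a basis of $\mathrm{Lie}(H)$ to a basis of $\lieg$), yields the stated bound.

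The main technical obstacle is extracting the quantitative $\SL_{2}$-mixing estimate from Burger--Sarnak in a form that preserves Sobolev control: one must ensure that the spectral gap is uniform across all irreducible constituents appearing in the restricted representation, and that the Sobolev norm used on the $H$-side is genuinely dominated by an $\mathcal{S}$ on the $G$-side (independently of which constituent carries a given smooth vector). Both points are standard but require some care, and one should expect the implicit constant to degenerate as $\varepsilon$ approaches $2s$.
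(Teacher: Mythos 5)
Your proposal follows essentially the same route as the paper's own (very brief) proof: Lemma~\ref{lem:rationality} to realize $\gamma$ as a rational unipotent inside a $\bQ$-embedded copy of $\SL_{2}$, the Burger--Sarnak restriction principle \cite{BurgerSarnak} to confine the restricted spectrum to the automorphic dual of $\SL_{2}/\bQ$, Selberg's theorem \cite{Selberg65} for the uniform spectral gap, and the standard spectral-decomposition/Sobolev estimate for unipotent matrix coefficients, followed by comparison of the $H$-side Sobolev norm with a $G$-side one; this is exactly the chain of citations the paper gives. One quantitative caveat in your write-up: your own threshold ``$\varepsilon$ exceeding twice the complementary series parameter'', combined with the Selberg bound $s\leq\frac{1}{4}$, yields the decay $(1+\lvert k\rvert)^{-1+\varepsilon}$ only for $\varepsilon\geq\frac{1}{2}$, i.e.\ a rate of about $(1+\lvert k\rvert)^{-1/2}$, which does not reach the full range $\varepsilon\in(0,\frac{1}{2})$ asserted in Corollary~\ref{cor:decayofmatrixcoefficientsforgamma}; since Burger--Sarnak only places the restriction inside the automorphic dual, which contains nontempered points up to the known bounds, these ingredients alone cannot do better, and the decay of unipotent coefficients in a complementary series is sharp regardless of smoothness. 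This limitation is shared by the paper's own one-line proof (it cites precisely the same facts), and for the downstream application---Proposition~\ref{prop:formularationalpoints} via Proposition~\ref{prop:effectivevonneumann}---any polynomial rate suffices, so the discrepancy affects only the stated exponent range, not the structure of either argument.
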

\begin{proof}
  Using Lemma~\ref{lem:rationality} and Theorem~1.1~(a) in \cite{BurgerSarnak} combined with uniformity of the spectral gap on congruence quotiens of~$\SL_{2}(\bR)$ \cite{Selberg65}, we get by standard techniques for the estimation of matrix coefficients that there is some~$\Ltwo$-Sobolev norm~$\mathcal{S}$ on~$\smooth(\lquot{\Gamma}{G})$ such that
  \begin{equation*}
    \big\lvert \langle\gamma^{k}f_{1},f_{2}\rangle-E_{f_{1}}\overline{E_{f_{2}}}\big\rvert\ll_{\varepsilon}(1+\lvert k\rvert)^{-1+\varepsilon}\mathcal{S}(f_{1})\mathcal{S}(f_{2}).
  \end{equation*}
\end{proof}
\begin{proof}[Proof of Prop.~\ref{prop:formularationalpoints}]
  We will adapt the argument from \cite{Akka2016}. Given a real-valued function~$f\in C_{c}^{\infty}(\Gamma\backslash G)$ and some~$K\in\mathbb{N}$, denote by~$D_{K}f$ the function given by
  \begin{equation}\label{eq:unipotentdiscrepancy}
    D_{K}f(x)=\frac{1}{K}\sum_{k=0}^{K-1}f(x\gamma^{k})-E_{f}\quad(x\in\Gamma\backslash G).
  \end{equation}
  Let~$F\subseteq G_{-}$ be a positive measure subset containing the identity, then using  and Lemma~\ref{lem:Lipshitzbound} there is some~$\Ltwo$-Sobolev norm~$\mathcal{S}$ such that for all~$x\in\mathcal{P}$
  \begin{equation*}
    \bigg\lvert f(x)-\frac{1}{\mathrm{vol}(F)}\int_{F}f(xu)\der u\bigg\rvert\ll\mathrm{diam}(F)\mathcal{S}(f)
  \end{equation*}
  for an~$\Ltwo$-Sobolev norm of sufficiently large degree. As~$\gamma$ is central in~$G_{-}$ and because~$\mathcal{P}$ is~$\gamma$-invariant, we have for all~$F\subseteq G_{-}$ and for all~$k\in\mathbb{N}$ that
  \begin{equation*}
    \sum_{x\in\mathcal{P}}\int_{F}f(xu)\der u=\sum_{x\in\mathcal{P}}\int_{F}f(xu\gamma^{k})\der u.
  \end{equation*}
  Hence by construction of~$D_{K}$ and for~$F$ of diameter~$\varepsilon_{F}$ and injective on~$\mathcal{P}$, using Cauchy-Schwarz it follows that
  \begin{align}
    \label{eq:boundbyunipotentaverage}\bigg\lvert\frac{1}{\lvert\mathcal{P}\rvert}\sum_{x\in\mathcal{P}}f(x)-E_{f}\bigg\lvert&\ll\varepsilon_{F}\mathcal{S}(f)+\bigg\lvert\frac{1}{\lvert\mathcal{P}\rvert\mathrm{vol}(F)}\sum_{x\in\mathcal{P}}\int_{F}D_{K}f(xu)\der u\bigg\rvert\\
    \notag&\ll\varepsilon_{F}\mathcal{S}(f)+\bigg(\frac{\mathrm{vol}(\Gamma G_{-}a)}{\lvert\mathcal{P}\rvert\mathrm{vol}(F)}\bigg)^{\frac{1}{2}}\mu_{a}\big(\lvert D_{K}f\rvert^{2}\big)^{\frac{1}{2}}.
  \end{align}
  As~$f$ is real-valued, smooth, and of compact support, so is~$(D_{K}f)^{2}-E_{f}^{2}$. In particular, we can apply Proposition \ref{prop:banana} to obtain
  \begin{align}
    \label{eq:boundbyspaceaverage}\bigg\lvert\mu_{a}\big((D_{K}f)^{2}\big)-\int_{\Gamma\backslash G}( D_{K}f)^{2}\bigg\rvert&\ll e^{-\kappa\rho_{+}(\log a)}\mathcal{S}\big((D_{K}f)^{2}-E_{f}^{2}\big)\\
    \notag&\ll e^{-\kappa\rho_{+}(\log a)}\mathcal{S}_{2}(D_{K}f)^{2},
  \end{align}
  where~$\mathcal{S}_{2}$ is an~$\Ltwo$-Sobolev norm satisfying~$\mathcal{S}\ll\mathcal{S}_{2}$. Using the properties of~$\Ltwo$-Sobolev norms described in Section \ref{sec:sobolevnorms}, we find that
  \begin{equation*}
    \mathcal{S}_{2}(D_{K}f)\leq\frac{1}{K}\sum_{k=0}^{K-1}\mathcal{S}_{2}(\gamma^{k}\cdot f)\ll\frac{1}{K}\sum_{k=0}^{K-1}\lVert\gamma^{k}\rVert^{\iota}\mathcal{S}_{2}(f),
  \end{equation*}
  where~$\iota\geq 1$ is a positive constant depending on the degree of~$\mathcal{S}_{2}$. As~$\gamma$ is unipotent, and as~$\pi$ is a rational representation, the same holds for~$\pi(\gamma)$, and hence we obtain~$\lVert\gamma^{k}\rVert\ll(1+\lvert k\rvert)^{N}$ with an implicit constant depending on~$\gamma$ and~$\pi$. Note that~$N\geq 1$. Plugging this into the above bound, we obtain
  \begin{equation}\label{eq:boundsobolevnormdiscrepancy}
    \mathcal{S}_{2}(D_{K}f)\ll\frac{1}{K}\sum_{k=0}^{K-1}(1+k)^{N\iota}\mathcal{S}_{2}(f)\ll(1+K)^{N\iota}\mathcal{S}_{2}(f).
  \end{equation}
  Therefore we have shown
  \begin{align}
    \label{eq:intermediatebound}
      \bigg\lvert\frac{1}{\lvert\mathcal{P}\rvert}\sum_{x\in\mathcal{P}}f(x)-E_{f}\bigg\rvert&\ll\bigg\{\varepsilon_{F}+\bigg(\frac{\mathrm{vol}(\Gamma G_{-}a)}{\lvert\mathcal{P}\rvert\mathrm{vol}(F)}\bigg)^{\frac{1}{2}}e^{-\frac{\kappa}{2}\rho_{+}(\log a)}K^{N\iota}\bigg\}\mathcal{S}_{2}(f)\\
  \notag&\qquad+\bigg(\frac{\mathrm{vol}(\Gamma G_{-}a)}{\lvert\mathcal{P}\rvert\mathrm{vol}(F)}\bigg)^{\frac{1}{2}}\bigg(\int_{\lquot{\Gamma}{G}}(D_{K}f)^{2}\mathrm{d}m_{\lquot{\Gamma}{G}}\bigg)^{\frac{1}{2}}
  \end{align}
  
  Combining Corollary~\ref{cor:decayofmatrixcoefficientsforgamma} and Proposition~\ref{prop:effectivevonneumann}, we have
  \begin{equation}\label{eq:discrepancybound}
    \int_{\Gamma\backslash G}\big( D_{K}f(x)\big)^{2}\der x\ll K^{-\varsigma}\mathcal{S}_{3}(f)^{2}
  \end{equation}
  for some~$\varsigma>0$ and an~$\Ltwo$-Sobolev norm~$\mathcal{S}_{2}\ll\mathcal{S}_{3}$ of sufficiently large degree~$\ell$. Note that the implicit constant depends on~$\gamma$ and on~$\varsigma$. 
  Finally, combining the bounds \eqref{eq:boundbyunipotentaverage}, \eqref{eq:boundbyspaceaverage}, \eqref{eq:boundsobolevnormdiscrepancy}, and \eqref{eq:discrepancybound}, we are left with
  \begin{equation*}
    \bigg\lvert\frac{1}{\lvert\mathcal{P}\rvert}\sum_{x\in\mathcal{P}}f(x)-E_{f}\bigg\rvert\ll\bigg\{\varepsilon_{F}+\bigg(\frac{\mathrm{vol}(\Gamma G_{-}a)}{\lvert\mathcal{P}\rvert\mathrm{vol}(F)}\bigg)^{\frac{1}{2}}\psi(K)\bigg\}\mathcal{S}_{3}(f),
  \end{equation*}
  where~$\psi$ denotes the function
  \begin{equation}\label{eq:equating}
    \psi(K)=e^{-\frac{\kappa}{2}\rho_{+}(\log a)}K^{N\iota}+K^{-\frac{\varsigma}{2}}.
  \end{equation}
  Equating the terms, we choose~$K\asymp e^{\frac{\kappa}{\varsigma+2N\iota}\varrho_{+}(\log a)}$ to obtain~$\psi(K)\ll e^{-\eta\rho_{+}(\log a)}$, where
  \begin{equation}\label{eq:choiceeta}
    \eta=\tfrac{\kappa}{2}\tfrac{\varsigma}{2N\iota+\varsigma}>0.
  \end{equation}
  Note that~$\mathrm{vol}(\Gamma G_{-}a)=e^{2\rho_{+}(\log a)}\mathrm{vol}(\mathcal{F})$, because~$a^{-1}\mathcal{F}a$ is a fundamental domain for the orbit~$\Gamma G_{-}a$. Thus follows
  \begin{equation*}
    \frac{\mathrm{vol}(\Gamma G_{-}a)}{\lvert\mathcal{P}\rvert\mathrm{vol}(F)}=\frac{\mathrm{vol}(\mathcal{F})}{\lvert\mathcal{P}\rvert\mathrm{vol}(F)}e^{2\rho_{+}(\log a)}.
  \end{equation*}
  Collecting terms, this shows
  \begin{equation*}
    \bigg\lvert\frac{1}{\lvert\mathcal{P}\rvert}\sum_{x\in\mathcal{P}}f(x)-E_{f}\bigg\rvert\ll\bigg(\frac{\mathrm{vol}(\mathcal{F})}{\lvert\mathcal{P}\rvert\mathrm{vol}(F)}\bigg)^{\frac{1}{2}}e^{(1-\eta)\rho_{+}(\log a)}\mathcal{S}_{2}(f)+\mathrm{diam}(F)\mathcal{S}_{2}(f)
  \end{equation*}
  as desired.
\end{proof}
For the sake of illustration, we apply the formula to~$\mathrm{SL}_{2}(\mathbb{R})$ and the lattice~$\Gamma=\mathrm{SL}_{2}(\mathbb{Z})$. Then~$\mathfrak{sl}_{2}(\mathbb{R})$ is the space of traceless two-by-two matrices, and is given by the span of the triple
\begin{equation}\label{eq:sl2triple}
  H=\begin{pmatrix}-1 & 0 \\ \phantom{-}0 & 1\end{pmatrix},\quad X=\bigg(\begin{matrix}0 & 1 \\ 0 & 0\end{matrix}\bigg),\quad Y=\bigg(\begin{matrix}0 & 0 \\ 1 & 0\end{matrix}\bigg).
\end{equation}
The subspace~$\mathfrak{a}=\mathbb{R}H$ is a Cartan subalgebra. Of course~$(X,H,Y)$ is an~$\liesltwo$-triple, i.e.~one calculates~$[H,X]=-2X$ and~$[H,Y]=2Y$. Let~$\mathfrak{a}_{+}=\mathbb{R}_{>0}H$, so that~$\Sigma_{+}$ is the singleton containing only the root defined by~$\lambda(H)=2$. It follows that
\begin{equation*}
  G_{-}=\bigg\{\bigg(\begin{matrix}1 & t \\ 0 & 1\end{matrix}\bigg);t\in\mathbb{R}\bigg\},\quad P_{0}=\bigg\{\bigg(\begin{matrix}\alpha & 0 \\ b & \alpha^{-1}\end{matrix}\bigg);\alpha>0,b\in\mathbb{R}\bigg\}
\end{equation*}
Given~$n\in\mathbb{N}$, consider the matrices
\begin{equation*}
  a_{n}=\exp\big((\tfrac{1}{2}\log n)H\big)\in A_{+}\text{ and }\gamma=\bigg(\begin{matrix}1 & 1 \\ 0 & 1\end{matrix}\bigg)\in C(G_{-})\cap\Gamma.
\end{equation*}
Then~$a_{n}\gamma^{k}a_{n}^{-1}=\big(\begin{smallmatrix}1 & k/n \\ 0 & 1\end{smallmatrix}\big)$. Let~$\mathcal{P}=\{\Gamma a_{n}\gamma^{k};0\leq k<n\}$ and~$F=(-\varepsilon,\varepsilon)$. Then~$\mathcal{P}$ is~$\gamma$-invariant and~$\lvert\mathcal{P}\rvert=n$. Proposition \ref{prop:formularationalpoints} yields for smooth~$f\in\mathscr{C}_{c}^{\infty}(\Gamma\backslash G)$
\begin{equation*}
  \bigg\lvert\frac{1}{n}\sum_{k=1}^{n}f(\Gamma a\gamma^{k})-\int_{\Gamma\backslash G}f\bigg\rvert\ll\varepsilon^{-\frac{1}{2}}n^{-\frac{\eta}{2}}\mathcal{S}(f)+\varepsilon\mathcal{S}(f)
\end{equation*}
and using~$\varepsilon=n^{-\frac{\eta}{2}}$ we obtain the following effective equidistribution result:
\begin{corollary}\label{cor:integerpoints}
  There exist an~$\Ltwo$-Sobolev norm~$\mathcal{S}$ on~$\mathrm{SL}_{2}(\mathbb{Z})\backslash\mathrm{SL}_{2}(\mathbb{R})$ and~$\beta>0$ such that denoting
  \begin{equation*}
    a_{y}=\bigg(\begin{matrix}y^{-\frac{1}{2}} & 0 \\ 0 & y^{\frac{1}{2}}\end{matrix}\bigg)\text{ and }u_{t}=\bigg(\begin{matrix}1 & t \\ 0 & 1\end{matrix}\bigg)\quad(t\in\mathbb{R},y>0)
  \end{equation*}
  and for all~$n\in\mathbb{N}$ and~$f\in C_{c}^{\infty}(\mathrm{SL}_{2}(\mathbb{Z})\backslash\mathrm{SL}_{2}(\mathbb{R}))$ holds
  \begin{equation*}
    \bigg\lvert\frac{1}{n}\sum_{k=0}^{n-1}f(\SL_{2}(\mathbb{Z})u_{k/n}a_{n})-\int_{\mathrm{SL}_{2}(\mathbb{Z})\backslash\mathrm{SL}_{2}(\mathbb{R})}f\bigg\rvert\ll n^{-\beta}\mathcal{S}(f)
  \end{equation*}
\end{corollary}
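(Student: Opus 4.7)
The plan is to recognize the set in the corollary as precisely a $\gamma$-orbit of the form treated by Proposition~\ref{prop:formularationalpoints}, and then substitute the explicit $\mathrm{SL}_{2}$ data into that proposition. First I would take the Cartan element $a_{n} = \exp\bigl(\tfrac{1}{2}(\log n)H\bigr) \in A_{+}$ and the unipotent $\gamma = u_{1} = \bigl(\begin{smallmatrix}1 & 1 \\ 0 & 1\end{smallmatrix}\bigr) \in \Gamma$; since $G_{-} = U$ is abelian, $\gamma$ lies in $C(G_{-}) \cap \Gamma$ and clearly has infinite order. The key identity is $a_{n}\gamma^{k}a_{n}^{-1} = u_{k/n}$, which rewrites $\Gamma u_{k/n}a_{n} = \Gamma a_{n}\gamma^{k}$, so the corollary's set becomes $\mathcal{P} = \{\Gamma a_{n}\gamma^{k} : 0 \le k < n\} \subseteq \Gamma G_{-}a_{n}$. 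This set is $\gamma$-invariant by construction, and a quick integrality check shows $\lvert\mathcal{P}\rvert = n$, since two cosets agree iff $u_{(k-k')/n} \in \Gamma$ iff $n \mid k - k'$.

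Next I would take $F$ to be the image of $(-\varepsilon,\varepsilon)$ in $G_{-} = U$ under $t \mapsto u_{t}$, with $\varepsilon > 0$ to be optimized. For $\varepsilon < 1/2$, an analogous integrality argument shows $F$ is injective on $\mathcal{P}$: any would-be coincidence $\Gamma a_{n}\gamma^{k_{1}}u_{t_{1}} = \Gamma a_{n}\gamma^{k_{2}}u_{t_{2}}$ forces $(k_{1}-k_{2}+t_{1}-t_{2})/n \in \mathbb{Z}$, which for $\lvert t_{1}-t_{2}\rvert < 1$ and $\lvert k_{1}-k_{2}\rvert < n$ forces the numerator to vanish. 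Clearly $\mathrm{vol}(F) \asymp \varepsilon$, $\mathrm{diam}(F) \asymp \varepsilon$, and $\mathbbm{1} \in F$. On the Lie-algebra side, $\Sigma_{+} = \{\lambda\}$ with $\lambda(H) = 2$ and $\dim\mathfrak{g}_{\lambda} = 1$, so $\varrho_{+}(\log a_{n}) = \tfrac{1}{2}\log n$, giving $e^{(1-\eta)\varrho_{+}(\log a_{n})} = n^{(1-\eta)/2}$ and $\mathrm{vol}(\Gamma G_{-}a_{n}) = n\,\mathrm{vol}(\mathcal{F})$.

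Substituting into Proposition~\ref{prop:formularationalpoints} then yields a bound of the shape $\bigl(\varepsilon^{-1/2}n^{-\eta/2} + \varepsilon\bigr)\mathcal{S}(f)$. Balancing the two terms by choosing $\varepsilon = n^{-\eta/3}$ (which is safely below $1/2$ for $n$ large, and otherwise the trivial bound from $\lVert f\rVert_{\infty} \ll \mathcal{S}(f)$ applies) produces the asserted estimate with $\beta = \eta/3 > 0$. There is no genuine obstacle here: the corollary is a direct computational specialization of the preceding proposition, and the only items that require care are the injectivity of $F$ on $\mathcal{P}$ and the value of $\varrho_{+}$ on $\log a_{n}$, both of which reduce to elementary $2\times 2$ matrix arithmetic.
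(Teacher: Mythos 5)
Your proposal is correct and follows essentially the same route as the paper: specialize Proposition~\ref{prop:formularationalpoints} to $\mathrm{SL}_{2}$ with $a_{n}=\exp\big((\tfrac12\log n)H\big)$, $\gamma=u_{1}$, $\mathcal{P}=\{\Gamma a_{n}\gamma^{k}:0\le k<n\}$ and $F\cong(-\varepsilon,\varepsilon)$, then balance $\varepsilon^{-1/2}n^{-\eta/2}+\varepsilon$. The only (harmless) deviations are that you verify the injectivity and cardinality computations explicitly and choose $\varepsilon=n^{-\eta/3}$ rather than the paper's $\varepsilon=n^{-\eta/2}$, which merely changes the admissible value of $\beta$.
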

\section[Dirichlet's unit theorem and horospheres]{The ring of integers, Dirichlet's unit theorem and horospheres in Hilbert modular surfaces}\label{sec:ringofintegers}
In this section we will resume the notation from the introduction, i.e.
\begin{itemize}
\item~$\field$ is a totally real number field of degree~$d$ over~$\mathbb{Q}$,
\item~$\mathfrak{o}$ is the ring of integers in~$\field$,
\item~$\mathfrak{o}^{\times}$ is the group of units in~$\mathfrak{o}$,
\item~$\mathfrak{o}_{>0}^{\times}$ is the group of totally positive units in~$\mathfrak{o}$,
\item~$G=\mathrm{SL}_{2}(\mathbb{R})^{d}$
\item and~$\Gamma\leq G$ is the image of~$\mathrm{SL}_{2}(\mathfrak{o})$ obtained by diagonal embedding with respect to the distinct Galois embeddings~$\field\hookrightarrow\mathbb{R}$.
\end{itemize}
\subsection{Algebraic Properties of~$\mathfrak{o}$ and its Quotient Rings}\label{sec:ant}
In this section we discuss properties of the ring of integers in a totally real number field. In particular we describe some asymptotic properties of the Euler totient function.
\begin{definition}
  Let~$I\subseteq\mathfrak{o}$ be an ideal. Then~$\phi(I)$ is defined to be the number
  \begin{equation*}
    \phi(I)=\lvert(\mathfrak{o}/I)^{\times}\rvert,
  \end{equation*}
  i.e.~the number of units in the ring~$\mathfrak{o}/I$. Given~$y\in\mathfrak{o}$, we let~$\phi(y)=\phi(y\mathfrak{o})$.
\end{definition}
We denote by~$N(I)$ the index of~$I$ in~$\mathfrak{o}$ and point out that this is finite. Indeed~$I\subseteq\mathbb{R}^{d}$ is a lattice and~$\mathrm{covol}(I)=N(I)\mathrm{covol}(\mathfrak{o})$.
\begin{remark}
  Any non-trivial ideal~$I\subseteq\mathfrak{o}$ has a unique prime factorization
  \begin{equation*}
    I=\prod_{\liep\text{ prime}}\liep^{\nu_{\liep}(I)}
  \end{equation*}
  with~$\nu_{\liep}(I)\in\N_{0}$ equal to~$0$ for almost every~$\liep\subseteq\mathfrak{o}$ prime. Using the Chinese remainder theorem, it is thus sufficient to determine~$\phi(\liep^{n})$ for~$n\in\N$ and~$\liep\subseteq\mathfrak{o}$ prime.
\end{remark}
Let~$\mathfrak{o}_{\liep}$ be the localization of~$\mathfrak{o}$ at~$\liep$, then~$\mathfrak{o}_{\liep}$ is a local ring and its unique unique maximal ideal~$\mathfrak{m}_{\liep}=\liep\mathfrak{o}_{\liep}$. One can show that for all~$m\geq 1$
  \begin{equation*}
    \rquot{\mathfrak{o}}{\liep^{m}}\cong\rquot{\mathfrak{o}_{\liep}}{\mathfrak{m}_{\liep}^{m}}.
  \end{equation*}
  On the other hand, if~$R$ is a local Dedekind domain with unique maximal ideal~$\liem$, if the cardinality~$q=\lvert\rquot{R}{\liem}\rvert$ of the quotient is finite, and if~$\liem^{n}\subsetneq\liem^{n-1}$, then
  \begin{equation*}
    \big\lvert\left(\rquot{R}{\liem^{n}}\right)^{\times}\big\rvert=(q-1)q^{n-1}.
  \end{equation*}
  As a corollary, it follows that~$\phi(\liep^{n})=(N(\liep)-1)N(\liep)^{n-1}$ for~$\liep$ prime in~$\mathfrak{o}$.
\begin{corollary}
  Let~$I\subseteq\mathfrak{o}$ an ideal, then
  \begin{equation*}
    \phi(I)=\prod_{\nu_{\liep}(I)\neq0}\big(N(\liep)-1\big)N(\liep)^{\nu_{\liep}(y)-1}
  \end{equation*}
\end{corollary}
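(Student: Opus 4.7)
The plan is to assemble the corollary from the two ingredients that the preceding discussion has already isolated: unique prime factorization of ideals in the Dedekind domain $\mathfrak{o}$ together with the Chinese remainder theorem, and the explicit value $\phi(\liep^{n}) = (N(\liep)-1)N(\liep)^{n-1}$ for prime powers.

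First I would invoke unique factorization to write $I = \prod_{\liep} \liep^{\nu_{\liep}(I)}$, where only finitely many exponents are non-zero, and observe that distinct prime powers in this factorization are pairwise coprime as ideals (the sum of $\liep^{a}$ and $\liep'^{b}$ for distinct primes $\liep \neq \liep'$ is the whole ring, because neither is contained in the maximal ideal of the other). The Chinese remainder theorem for Dedekind domains then gives a ring isomorphism
\begin{equation*}
  \rquot{\mathfrak{o}}{I} \;\cong\; \prod_{\nu_{\liep}(I) \neq 0} \rquot{\mathfrak{o}}{\liep^{\nu_{\liep}(I)}},
\end{equation*}
and passing to groups of units (which is a functor on finite commutative rings commuting with finite products) yields
\begin{equation*}
  \left(\rquot{\mathfrak{o}}{I}\right)^{\times} \;\cong\; \prod_{\nu_{\liep}(I) \neq 0} \left(\rquot{\mathfrak{o}}{\liep^{\nu_{\liep}(I)}}\right)^{\times}.
\end{equation*}
Taking cardinalities shows that $\phi$ is multiplicative on coprime ideals, so $\phi(I) = \prod_{\liep} \phi(\liep^{\nu_{\liep}(I)})$.

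Second, I would substitute the prime-power formula $\phi(\liep^{n}) = (N(\liep)-1) N(\liep)^{n-1}$ already established from the localization identification $\mathfrak{o}/\liep^{m} \cong \mathfrak{o}_{\liep}/\mathfrak{m}_{\liep}^{m}$ and the general fact about units in quotients of local Dedekind domains. This immediately gives the claimed product expansion. There is essentially no obstacle here: the corollary is a bookkeeping step combining multiplicativity of $\phi$ via CRT with the prime-power computation. The only thing one should verify in passing is that $N(\liep^{n}) = N(\liep)^{n}$, which follows from multiplicativity of the ideal norm on Dedekind domains and is implicit in the identification $N(I) = [\mathfrak{o} : I]$ under the product decomposition above.
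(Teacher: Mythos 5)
Your proposal is correct and follows exactly the route the paper intends: the corollary is assembled from the unique prime factorization plus the Chinese remainder theorem (giving multiplicativity of $\phi$ on coprime ideals) and the prime-power computation $\phi(\liep^{n})=(N(\liep)-1)N(\liep)^{n-1}$ established just before via localization. The paper leaves this assembly implicit, so your write-up simply makes explicit the same bookkeeping step.
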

Similarly to the situation for~$\Q$, one can prove the following
\begin{proposition}
  Let~$\varepsilon>0$, then for all ideals~$I$ with~$N(I)$ sufficiently large
  \begin{equation*}
    N(I)^{1-\varepsilon}<\phi(I)<N(I).
  \end{equation*}
\end{proposition}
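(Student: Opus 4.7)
The plan is to start from the product formula derived in the preceding corollary, namely
\begin{equation*}
  \frac{\phi(I)}{N(I)} = \prod_{\mathfrak{p} \mid I}\Bigl(1 - \frac{1}{N(\mathfrak{p})}\Bigr),
\end{equation*}
which is an immediate consequence of $\phi(I)=\prod (N(\mathfrak{p})-1)N(\mathfrak{p})^{\nu_{\mathfrak{p}}(I)-1}$ and $N(I)=\prod N(\mathfrak{p})^{\nu_{\mathfrak{p}}(I)}$. The upper bound $\phi(I)<N(I)$ follows at once, since every factor is strictly less than $1$ and $I\neq\mathfrak{o}$ for $N(I)$ large.

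For the lower bound I would take logarithms and use the elementary inequality $\log(1-1/n)\geq -2/n$ valid for all integers $n\geq 2$, to obtain
\begin{equation*}
  \log\frac{\phi(I)}{N(I)} \;\geq\; -2\sum_{\mathfrak{p}\mid I}\frac{1}{N(\mathfrak{p})}.
\end{equation*}
Hence the task reduces to showing that $\sum_{\mathfrak{p}\mid I} 1/N(\mathfrak{p}) = o(\log N(I))$ as $N(I)\to\infty$, uniformly in $I$.

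To bound this sum I would split the primes dividing $I$ according to the size of their norm, using a threshold $Y=Y(N(I))$ to be chosen. For primes of large norm, $\sum_{\mathfrak{p}\mid I,\,N(\mathfrak{p})>Y} 1/N(\mathfrak{p}) \leq \omega(I)/Y$, where $\omega(I)$ denotes the number of distinct prime ideal divisors of $I$; since $\prod_{\mathfrak{p}\mid I}N(\mathfrak{p})\leq N(I)$ and each $N(\mathfrak{p})\geq 2$, one has $\omega(I)\leq\log_{2}N(I)$. For primes of small norm I would use the crude comparison with rational primes: each rational prime $p$ has at most $d$ prime ideals of $\mathfrak{o}$ above it and every such prime $\mathfrak{p}$ satisfies $N(\mathfrak{p})\geq p$, so
\begin{equation*}
  \sum_{\mathfrak{p}\mid I,\,N(\mathfrak{p})\leq Y}\frac{1}{N(\mathfrak{p})} \;\leq\; d\sum_{p\leq Y}\frac{1}{p} \;=\; O(d\log\log Y)
\end{equation*}
by Mertens' theorem over $\mathbb{Q}$. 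Choosing for instance $Y=\log N(I)$ makes both contributions $O(\log\log N(I))$, hence $o(\log N(I))$, which gives $\phi(I)/N(I)\geq\exp(-o(\log N(I)))>N(I)^{-\varepsilon}$ for $N(I)$ large enough.

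The argument is essentially routine and parallels the classical $\mathbb{Z}$-case; the only mildly delicate point is the bound on the sum of reciprocals of small prime norms, but as observed this is handled for free by bounding each rational prime's contribution by $d/p$ and invoking the classical Mertens estimate, so no number-field analog of Mertens is actually required.
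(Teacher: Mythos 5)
Your argument is correct, but it takes a different route from the paper. The paper's proof is essentially a reduction: it first shows (by a geometric Minkowski/successive-minima argument) that there are only finitely many ideals, hence finitely many prime ideals, of norm below any given bound, and then simply invokes the classical elementary argument for the rational totient function from Hardy--Wright \S18.4, which needs no Mertens-type input at all. You instead give a self-contained quantitative proof: writing $\phi(I)/N(I)=\prod_{\mathfrak{p}\mid I}(1-1/N(\mathfrak{p}))$, taking logarithms with $\log(1-1/n)\geq -2/n$, splitting at a threshold $Y$, bounding the large-norm part via $\omega(I)\leq\log_{2}N(I)$, and bounding the small-norm part by comparing with rational primes (at most $d$ primes of $\mathfrak{o}$ above each $p$, each with $N(\mathfrak{p})\geq p$) and applying Mertens over $\mathbb{Q}$. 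All steps check out, and the estimate is uniform in $I$ as required; the only cosmetic slip is that with $Y=\log N(I)$ the large-norm contribution is in fact $O(1)$ rather than $O(\log\log N(I))$, which only helps you. Interestingly, your reduction to rational primes plus classical Mertens is precisely the device the paper uses in its proof of the sharper Proposition \ref{prop:totientbound} immediately afterwards (where it also notes that no number-field Mertens is needed), so your method actually yields an explicit lower bound of the shape $\phi(I)/N(I)\gg_{d}(\log\log N(I))^{-2d}$, much stronger than the $N(I)^{-\varepsilon}$ loss demanded here; the trade-off is that the paper's route is shorter given the citation and entirely elementary, while yours is self-contained and quantitative.
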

\begin{proof}
  Note that for all~$M>0$ there are only finitely many ideals~$I\subseteq\mathfrak{o}$ of index bounded by~$M$. We provide a geometric argument for this. Fix a norm~$\lVert\cdot\rVert$ on~$\R^{d}$ and let~$\delta=\min\{\lVert v\rVert;v\in\mathfrak{o}\}$. Let~$I\subseteq\mathfrak{o}$ be an ideal, then by Minkowski's theorem on successive minima~$I$ has a basis~$v_{1},\ldots,v_{d}$ such that~$\lVert v_{i}\rVert\asymp \lambda_{i}(I)$, where
  \begin{equation*}
    \lambda_{i}(I)=\min\set{r>0}{I\cap \overline{B_{r}(0)}\text{ contains~$i$ linearly independent vectors}}
  \end{equation*}
  and~$\lambda_{1}(I)\cdots\lambda_{d}(I)\asymp\covol{I}$. Hence there is~$C>0$ such that
  \begin{equation*}
    \lambda_{d}(I)\leq C\delta^{1-d}\mathrm{covol}(I)=C\delta^{1-d}\covol{\mathfrak{o}}N(I).
  \end{equation*}
  For every~$M>0$ there are only finitely many vectors~$v\in\mathfrak{o}$ such that
  \begin{equation*}
    \lVert v\rVert\leq C\delta^{1-d}\covol{\mathfrak{o}}M.
  \end{equation*}
  This proves the claim in the beginning. In particular, for every~$M>0$, there are only finitely many prime ideals of norm at most~$M$. Using this, one can prove the proposition in exactly the same way as for the totient function on~$\mathbb{Z}$. We refer the reader to~\cite[\textsection18.4]{HardyWright}.
\end{proof}
Finally, we can deduce the following
\begin{proposition}\label{prop:totientbound}
  There exists~$M>0$ such that for all~$y\in\mathfrak{o}$ holds
  \begin{equation*}
    \frac{\lvert N(y)\rvert}{\phi(y)(\log\log\lvert N(y)\rvert)^{d}}\leq M.
  \end{equation*}
\end{proposition}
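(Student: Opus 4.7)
The plan is to adapt the classical proof that $n/\phi(n) \ll \log\log n$ from $\Z$ to the ring of integers $\mathfrak{o}$, paying only a factor of $d$ for the at most $d$ prime ideals of $\mathfrak{o}$ lying over each rational prime. Starting from the factorization of $\phi$ established in the preceding corollary, I would rewrite the quantity of interest as
\[
  \frac{\lvert N(y)\rvert}{\phi(y)} \;=\; \prod_{\mathfrak{p}\mid y}\Big(1-\tfrac{1}{N(\mathfrak{p})}\Big)^{-1}.
\]
Taking logarithms and using the elementary bound $-\log(1-x) \leq x + 2x^{2}$ for $x\in[0,1/2]$, together with the convergence of $\sum_{\mathfrak{p}}N(\mathfrak{p})^{-2}\leq d\sum_{p}p^{-2}<\infty$ (which uses that at most $d$ primes of $\mathfrak{o}$ lie above any given rational prime), I obtain
\[
  \log\frac{\lvert N(y)\rvert}{\phi(y)} \;\leq\; \sum_{\mathfrak{p}\mid y}\frac{1}{N(\mathfrak{p})} + O(1).
\]

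The next step is to descend from prime ideals to rational primes. Let $S(y)$ denote the set of rational primes $p$ below some $\mathfrak{p}\mid y$. The same bound ``at most $d$ ideals above $p$, each of norm $\geq p$'' yields
\[
  \sum_{\mathfrak{p}\mid y}\frac{1}{N(\mathfrak{p})} \;\leq\; d\sum_{p\in S(y)}\frac{1}{p}.
\]
Conversely, for each $p\in S(y)$ at least one $\mathfrak{p}\mid y$ lies above $p$ and contributes a factor $N(\mathfrak{p})\geq p$ to $\lvert N(y)\rvert$; since the rational primes in $S(y)$ are pairwise distinct one deduces $\prod_{p\in S(y)}p\leq \lvert N(y)\rvert$, i.e.\ $\sum_{p\in S(y)}\log p\leq \log\lvert N(y)\rvert$.

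The main step, and the only one with real content, is the comparison of $\sum_{p\in S(y)}1/p$ with $\log\log\lvert N(y)\rvert$ under this constraint. Subject to a bound $\sum_{p\in S(y)}\log p\leq X$, the sum $\sum_{p\in S(y)}1/p$ is maximised when $S(y)$ is an initial segment of primes, and Chebyshev's estimate $\theta(T) \asymp T$ forces $S(y)\subseteq\{p\leq T\}$ with $T\ll X$. Mertens' theorem then gives
\[
  \sum_{p\in S(y)}\frac{1}{p} \;\leq\; \log\log T + O(1) \;\leq\; \log\log X + O(1).
\]
Setting $X=\log\lvert N(y)\rvert$ and chaining the inequalities yields $\log(\lvert N(y)\rvert/\phi(y)) \leq d\log\log\log\lvert N(y)\rvert + O(1)$, which exponentiates to $\lvert N(y)\rvert/\phi(y) \ll (\log\log\lvert N(y)\rvert)^{d}$. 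This rearrangement is the direct analogue of the treatment in Hardy--Wright \textsection18.4 and is where the work lies; the rest is bookkeeping around the ramification of rational primes in $\mathfrak{o}$.
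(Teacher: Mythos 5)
Your proposal is correct, and at its core it is the same strategy as the paper's: express $\lvert N(y)\rvert/\phi(y)$ as a product over the prime ideals dividing $y$, pass to rational primes using that at most $d$ primes of $\mathfrak{o}$ lie above each $p$ and that $N(\mathfrak{p})\geq p$, and finish with a Mertens-type estimate, exactly in the spirit of Hardy--Wright \textsection18.4. The difference lies in how the endgame is run. The paper argues multiplicatively: it bounds $\prod_{\mathfrak{p}\mid y}(1-N(\mathfrak{p})^{-1})$ below by extending the product to all prime ideals of norm at most $\lvert N(y)\rvert$, reduces to $\prod_{p\leq\lvert N(y)\rvert}(1-p^{-1})^{d}$ up to the constant $\zeta(2)^{-d}\cdots\zeta(d)^{-d}$, and then invokes the splitting from the proof of Theorems 323/328 in Hardy--Wright at the threshold $\log\lvert N(y)\rvert$ (few prime divisors exceed $\log\lvert N(y)\rvert$, and those contribute a factor $1+o(1)$; the small ones are controlled by Mertens). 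You instead work additively with $\log(\lvert N(y)\rvert/\phi(y))\leq d\sum_{p\in S(y)}p^{-1}+O(1)$, keep only the primes actually dividing $y$, and exploit the constraint $\prod_{p\in S(y)}p\leq\lvert N(y)\rvert$ through an exchange argument reducing to an initial segment, plus Chebyshev and Mertens. This buys a cleaner justification of precisely the step the paper treats tersely: taken literally, extending the product to all primes of norm up to $\lvert N(y)\rvert$ only gives a bound of size $(\log\lvert N(y)\rvert)^{-d}$, so the subsequent Hardy--Wright manipulation must be read as applied to the prime divisors of $y$ rather than to that extended product; your formulation keeps the divisibility constraint explicit throughout and avoids that delicacy. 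One small wording point: after the exchange argument it is the majorizing initial segment, not $S(y)$ itself, that is contained in $\{p\leq T\}$ with $T\ll X$, but this does not affect the bound $\sum_{p\in S(y)}p^{-1}\leq\log\log X+O(1)$, and your chaining and exponentiation to $(\log\log\lvert N(y)\rvert)^{d}$ are correct.
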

\begin{proof}
  Recall that~$\lvert N(y)\rvert=N(y\mathfrak{o})$. Using the preceding discussion and multiplicativity of the norm, we obtain that
  \begin{equation*}
    \frac{\phi(y)}{\lvert N(y)\rvert}=\prod_{\nu_{\liep}(y)\neq0}\frac{N(\liep)-1}{N(\liep)}\geq\prod_{\substack{\liep\subseteq\mathfrak{o}\text{ prime}\\ N(\liep)\leq \lvert N(y)\rvert}}\frac{N(\liep)-1}{N(\liep)}.
  \end{equation*}
  Any prime ideal in~$\mathfrak{o}$ has norm equal to~$p^{k}$ for some prime~$p\in\Z$ and some~$k\in\N$, because any prime ideal in~$\mathfrak{o}$ is maximal. In fact, the prime~$p$ occuring is exactly the prime the prime ideal is lying over in the sense that its intersection with the rational integers yield the ideal~$(p)$ in~$\Z$. On the other hand,~$p\mathfrak{o}$ has norm~$p^{d}$, and thus using the fact that the ideal~$p\mathfrak{o}$ is a product of the prime ideals lying over~$p$ together with the multiplicativity of the norm, it follows that the norm of a prime ideal lying over~$p$ is at most~$p^{d}$. The fundamental identity relating the inertia degree and the ramification index of prime ideals lying over~$p$ to the degree of the number field \cite[Ch.~I,~\textsection8]{Neukirch} implies that there are at most~$d$ prime ideals in~$\mathfrak{o}$ lying over~$p$. This implies that
  \begin{align*}
    \frac{\phi(y)}{\lvert N(y)\rvert}\geq&\prod_{\substack{\liep\subseteq\mathfrak{o}\text{ prime}\\ N(\liep)\leq \lvert N(y)\rvert}}\frac{N(\liep)-1}{N(\liep)}\geq\prod_{\substack{p\in\N\text{ prime}\\ p\leq\lvert N(y)\rvert}}\prod_{\substack{\liep\subseteq\mathfrak{o}\text{ prime}\\ \liep\text{ lies over }p}}\frac{N(\liep)-1}{N(\liep)}\\
    \geq&\prod_{\substack{p\in\N\text{ prime}\\ p\leq\lvert N(y)\rvert}}\prod_{k=1}^{d}\left(\frac{p^{k}-1}{p^{k}}\right)^{d}\geq \zeta(2)^{-d}\cdots\zeta(d)^{-d}\prod_{\substack{p\in\N\text{ prime}\\ p\leq\lvert N(y)\rvert}}\left(\frac{p-1}{p}\right)^{d}.
  \end{align*}
  So we are left with bounding the product on the right-hand side. Here we proceed as in the proof for the corresponding~$\lim\inf$ for the Euler totient function on~$\Z$ \cite[Proof of Thms.~323 and 328]{HardyWright}, which yields that
  \begin{equation*}
    \prod_{\substack{p\in\N\text{ prime}\\ p\leq\lvert N(y)\rvert}}\left(\frac{p-1}{p}\right)\geq\left(1-\frac{1}{\log\lvert N(y)\rvert}\right)^{\frac{\log\lvert N(y)\rvert}{\log\log\lvert N(y)\rvert}}\prod_{\substack{p\in\N\text{ prime}\\ p\leq\log\lvert N(y)\rvert}}\left(\frac{p-1}{p}\right).
  \end{equation*}
  Using Mertens' theorem (cf.~\cite[Thm.~429]{HardyWright}), we know that for large enough~$\lvert N(y)\rvert$ holds
  \begin{equation*}
    \prod_{\substack{p\in\N\text{ prime}\\ p\leq\log\lvert N(y)\rvert}}\left(\frac{p-1}{p}\right)\asymp\frac{1}{\log\log\lvert N(y)\rvert},
  \end{equation*}
  whereas the function~$t\mapsto(1-t^{-1})^{t/\log t}$ satisfies
  \begin{equation*}
    \lvert(1-t^{-1})^{t/\log t}-1\rvert\ll(\log t)^{-1}
  \end{equation*}
  as~$t\to\infty$, so that all in all we obtain for large enough~$\lvert N(y)\rvert$, that
  \begin{equation*}
    \prod_{\substack{p\in\N\text{ prime}\\ p\leq\lvert N(y)\rvert}}\left(\frac{p-1}{p}\right)\gg\frac{1}{\log\log\lvert N(y)\rvert}.
  \end{equation*}
  This proves
  \begin{equation*}
    \frac{\lvert N(y)\rvert}{\phi(y)\big(\log\log\lvert N(y)\rvert\big)^{d}}\ll 1
  \end{equation*}
  and hence the claim.
\end{proof}
\subsection{Dirichlet's unit theorem, the Cartan subgroup and effective equidistribution of large horospheres}\label{sec:dirichlet}
We note first, that~$\mathfrak{o}_{>0}^{\times}\leq\mathfrak{o}^{\times}$ is a finite index subgroup as by Dirichlet's unit theorem~$\mathfrak{o}^{\times}/\{\pm1\}$ is torsion free and hence the image of~$\mathfrak{o}^{\times}/\{\pm1\}$ under the map~$x\mapsto x^{2}$ is a~$\Z$-submodule of full rank contained in~$\mathfrak{o}_{>0}^{\times}$.
In fact by noting that under the isomorphism~$\mathfrak{o}^{\times}/\{\pm1\}\cong\Z^{d-1}$ we have
\begin{equation*}
  (2\Z)^{d-1}\leq\mathfrak{o}_{>0}^{\times}\cong\Z^{d-1},
\end{equation*}
we can bound~$[\mathfrak{o}^{\times}:\mathfrak{o}_{>0}^{\times}]\leq 2^{d}$.

In the remainder of this section, we describe a Cartan subgroup of~$G$ explicitly, choose a particularly useful basis for its Lie algebra and lay grounds for exploiting the fact that the~$\Q$-rank of~$G=\SL_{2}(\R)^{d}$ equals~$1$.

The Lie algebra of~$G$ is the direct sum of~$d$ copies of~$\mathfrak{sl}_{2}(\mathbb{R})$, and thus the direct sum of a Cartan subalgebra in each component yields a Cartan subalgebra in~$\mathfrak{g}$. We choose the Cartan subalgebra explicitly as
\begin{equation*}
  \mathfrak{a}=\bigoplus_{i=1}^{d}\mathbb{R}H,
\end{equation*}
where~$H$ is the diagonal element in the standard~$\mathfrak{sl}_{2}$-triple in \eqref{eq:sl2triple}. The exponential map~$\exp:\liea\to G$ defines an isomorphism onto its image~$A$, which is the group of diagonal matrices in~$G$ with positive entries on the diagonal. In what follows, we fix a set of generators~$\varepsilon_{1},\ldots,\varepsilon_{d-1}$ of~$\mathfrak{o}_{>0}^{\times}$. Given~$\varepsilon\in\mathfrak{o}_{>0}^{\times}$, let~$a_{\varepsilon}=\big(\begin{smallmatrix}\varepsilon^{-1} & 0 \\ 0 & \varepsilon\end{smallmatrix}\big)\in\mathrm{SL}_{2}(\mathfrak{o})$. Given an index~$i=1,\ldots,d-1$, we denote by~$h_{i}$ the embedding of~$a_{\varepsilon_{i}}$ in~$G$ and define~$H_{i}\in\mathfrak{a}$ as the logarithm of~$h_{i}$. Finally, we denote by~$H_{d}$ the diagonal embedding of~$H$ in~$\mathfrak{a}$.
\begin{proposition}
  The elements~$H_{1},\ldots,H_{d}$ form a basis of~$\liea$.
\end{proposition}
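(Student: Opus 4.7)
The plan is to identify each~$H_{i}$ as an explicit vector in coordinates and then deduce linear independence from Dirichlet's unit theorem.

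Let~$H_{(j)}$ denote the image of~$H$ in the~$j$-th copy of~$\mathfrak{sl}_{2}(\R)$ inside~$\lieg$, so that~$(H_{(j)})_{j=1}^{d}$ is a basis of~$\liea$ and the last generator in the statement becomes~$H_{d}=\sum_{j=1}^{d}H_{(j)}$. For~$i\leq d-1$, the~$j$-th coordinate of the diagonal embedding of~$a_{\varepsilon_{i}}$ is the matrix~$(\begin{smallmatrix}\sigma_{j}(\varepsilon_{i})^{-1} & 0 \\ 0 & \sigma_{j}(\varepsilon_{i})\end{smallmatrix})=\exp\bigl(\log\sigma_{j}(\varepsilon_{i})\cdot H\bigr)$, where~$\sigma_{j}(\varepsilon_{i})>0$ by total positivity. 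Hence
\begin{equation*}
  H_{i}=\sum_{j=1}^{d}\log\sigma_{j}(\varepsilon_{i})\,H_{(j)}\qquad(1\leq i\leq d-1).
\end{equation*}

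In the basis~$H_{(1)},\ldots,H_{(d)}$ the family~$H_{1},\ldots,H_{d}$ is thus represented by the~$d\times d$ matrix whose top~$d-1$ rows are the logarithmic embedding~$\ell(\varepsilon_{i})=(\log\sigma_{j}(\varepsilon_{i}))_{j=1}^{d}$ of the chosen generators, and whose bottom row is~$(1,\ldots,1)$. By Dirichlet's unit theorem, the logarithmic embedding~$\ell:\mathfrak{o}_{>0}^{\times}\to\R^{d}$ is injective with image a lattice of full rank~$d-1$ inside the trace-zero hyperplane~$V=\{x\in\R^{d}:\sum_{j}x_{j}=0\}$; this hyperplane contains~$\ell(\mathfrak{o}_{>0}^{\times})$ because each totally positive unit has norm~$+1$, whence~$\sum_{j}\log\sigma_{j}(\varepsilon)=\log\lvert N(\varepsilon)\rvert=0$. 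Since~$\varepsilon_{1},\ldots,\varepsilon_{d-1}$ generate~$\mathfrak{o}_{>0}^{\times}$, their images~$\ell(\varepsilon_{1}),\ldots,\ell(\varepsilon_{d-1})$ are a~$\Z$-basis of this rank~$d-1$ lattice and in particular~$\R$-linearly independent in~$V$.

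It remains to note that~$(1,\ldots,1)\notin V$, since the sum of its entries is~$d\neq 0$; so prepending this vector to an~$\R$-basis of~$V$ yields an~$\R$-basis of~$\R^{d}$. Hence~$H_{1},\ldots,H_{d}$ are linearly independent in~$\liea$, and since~$\dim_{\R}\liea=d$ they form a basis, as claimed. There is no real obstacle here beyond correctly matching the totally positive unit group with the rank~$d-1$ kernel of the norm map; the asymmetric role of~$H_{d}$ is exactly what compensates for this deficit in rank.
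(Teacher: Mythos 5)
Your proof is correct and follows essentially the same route as the paper: both identify the $H_{i}$ ($i\leq d-1$) with the logarithmic embeddings of the generators of~$\mathfrak{o}_{>0}^{\times}$, invoke Dirichlet's unit theorem to see these span the hyperplane~$(1,\ldots,1)^{\perp}$, and then adjoin~$H_{d}$, corresponding to the all-ones vector, to get a basis of~$\liea\cong\R^{d}$. Your version merely spells out the coordinate computation~$H_{i}=\sum_{j}\log\sigma_{j}(\varepsilon_{i})H_{(j)}$ a bit more explicitly than the paper does.
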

\begin{proof}
  Recall that~$\liea\cong\R^{d}$ and consider the homomorphism~$\field^{\times}\to\R^{d}$ given by
  \begin{equation*}
    x\mapsto(\log\lvert\sigma_{1}x\rvert,\ldots,\log\lvert\sigma_{d}x\rvert)
  \end{equation*}
  As of Dirichlet's unit theorem (cf. \cite[pp.~41f.]{Neukirch}), we know that the image~$\Jcal$ of~$\mathfrak{o}^{\times}$ is a lattice in the subspace~$E=(1,\ldots,1)^{\perp}\subseteq\mathbb{R}^{d}$. It follows that the image of any set of generators of~$\mathfrak{o}_{>0}^{\times}$ is a basis of the subspace~$E$ and hence the union of this image together with the vector~$(1,\ldots,1)\in\R^{d}$ is a basis of~$\R^{d}$. The map sending~$(v_{1},\ldots,v_{d})$ to the element in~$\mathfrak{g}$ whose coordinates are~$v_{i}H$ yields an isomorphism~$\R^{d}\cong\liea$ and it maps the image of the generator~$\varepsilon_{i}$ to~$H_{i}$.
\end{proof}
We fix the set of positive roots to be the~$d$ distinct roots~$\lambda_{1},\ldots,\lambda_{d}$ sending an element in~$\mathfrak{a}$ to the negative of the top-left entry of one of its components, i.e.~
\begin{equation*}
  \lambda_{i}\Big(\sum_{j=1}^{d}\alpha_{j}H_{j}\Big)=2\sum_{j=1}^{d-1}\alpha_{j}\log\sigma_{i}\varepsilon_{j}+2\alpha_{d}.
\end{equation*}
Each of the roots has multiplicity one, and thus in the notation of Section \ref{sec:horospheres}, we get
\begin{equation}\label{eq:choiceofroots}
  \varrho_{+}\Big(\sum_{j=1}^{d}\alpha_{j}H_{j}\Big)=\sum_{j=1}^{d-1}\alpha_{j}\log\sigma_{i}\varepsilon_{j}+\alpha_{d}.
\end{equation}
We will use the following corollaries to Dirichlet's unit theorem, which allow us to reduce the problem to a rank one problem, i.e.~\emph{$G$ has~$\Q$-rank one}. The first corollary states that element in~$\mathfrak{o}$ is associated to an element that acts as an isothety.
\begin{corollary}\label{cor:rankone}
  Let~$y\in\mathfrak{o}$ be totally positive, then there is~$\varepsilon\in\mathfrak{o}_{>0}^{\times}$ such that
  \begin{equation*}
    \sigma_{i}(\varepsilon y)\asymp N(\varepsilon y)^{\frac{1}{d}}\quad(i=1,\ldots,d).
  \end{equation*}
\end{corollary}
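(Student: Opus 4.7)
The plan is to translate the claim via the logarithmic embedding into a lattice-point approximation problem in the trace-zero hyperplane, and solve it by the standard observation that a full-rank lattice in a Euclidean space admits a bounded fundamental domain.

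First, I would introduce the logarithmic embedding~$\ell\colon\field^{\times}\to\R^{d}$ defined by~$\ell(x)=(\log\lvert\sigma_{1}x\rvert,\ldots,\log\lvert\sigma_{d}x\rvert)$. This is a group homomorphism satisfying~$\sum_{i}\ell(x)_{i}=\log\lvert N(x)\rvert$, and by Dirichlet's unit theorem~$\ell(\mathfrak{o}^{\times})$ is a lattice of rank~$d-1$ in the hyperplane~$E=\{v\in\R^{d}:v_{1}+\cdots+v_{d}=0\}$ (this is precisely~$\Jcal$, appearing in the proof of the preceding proposition). Since~$\mathfrak{o}_{>0}^{\times}$ has finite index in~$\mathfrak{o}^{\times}$, as recorded at the start of Section~\ref{sec:dirichlet}, the image~$\ell(\mathfrak{o}_{>0}^{\times})$ is still a full-rank lattice in~$E$ and therefore admits a bounded fundamental domain. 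In particular, there is a constant~$R>0$ depending only on~$\field$ such that every point of~$E$ lies within Euclidean distance~$R$ of some~$\ell(\varepsilon)$ with~$\varepsilon\in\mathfrak{o}_{>0}^{\times}$.

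Next, given a totally positive~$y\in\mathfrak{o}$, I would write the orthogonal decomposition
\begin{equation*}
\ell(y)=\tfrac{\log N(y)}{d}(1,\ldots,1)+p,\qquad p\in E,
\end{equation*}
and apply the lattice approximation to~$-p\in E$ to produce~$\varepsilon\in\mathfrak{o}_{>0}^{\times}$ with~$\lVert\ell(\varepsilon)+p\rVert\leq R$. Because~$N(\varepsilon)=1$ for a totally positive unit, one has~$N(\varepsilon y)=N(y)$, and the displayed decomposition for~$\ell(\varepsilon y)=\ell(\varepsilon)+\ell(y)$ shows
\begin{equation*}
\ell(\varepsilon y)-\tfrac{\log N(\varepsilon y)}{d}(1,\ldots,1)=\ell(\varepsilon)+p,
\end{equation*}
which has norm at most~$R$. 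Hence~$\lvert\log\sigma_{i}(\varepsilon y)-\tfrac{1}{d}\log N(\varepsilon y)\rvert\leq R$ for each~$i$, which exponentiates to~$\sigma_{i}(\varepsilon y)\asymp N(\varepsilon y)^{1/d}$ with absolute multiplicative constants, as claimed.

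The argument poses no real obstacle; the one point that needs attention is that the approximating unit must be totally positive, which is why it is essential to know that~$\ell(\mathfrak{o}_{>0}^{\times})$, and not merely~$\ell(\mathfrak{o}^{\times})$, remains a full-rank lattice in~$E$. This in turn is what the finite-index observation at the opening of Section~\ref{sec:dirichlet} delivers.
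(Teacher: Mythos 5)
Your proof is correct and is essentially the paper's argument: both reduce the claim, via the logarithmic embedding, to approximating the trace-zero component of $\log y$ by a point of the lattice $\ell(\mathfrak{o}_{>0}^{\times})\subseteq E$, which Dirichlet's unit theorem plus the finite index of $\mathfrak{o}_{>0}^{\times}$ in $\mathfrak{o}^{\times}$ supplies with a bounded fundamental domain. Your orthogonal-decomposition bookkeeping is just a cleaner rephrasing of the paper's reduction via $z=N(y)y^{-1}$, so there is nothing further to add.
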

\begin{proof}
  It suffices to prove the existence of~$\varepsilon\in\mathfrak{o}_{>0}^{\times}$ such that for all~$i=1,\ldots,d$ we have 
  \begin{equation*}
    (\sigma_{i} y)^{1-d}\prod_{\sigma\neq\sigma_{i}}\sigma y\asymp\sigma_{i} \varepsilon^{d}
  \end{equation*}
  Let~$z=N(y)y^{-1}$, then~$N(z)=N(y)^{d-1}$ and thus~$y^{1-d}z$ is contained in the norm 1 surface in~$\R^{d}$. Taking logarithms of the required inequality, we need to find~$c,C\in\mathbb{R}$ and~$\varepsilon\in\mathfrak{o}_{>0}^{\times}$ such that
  \begin{equation*}
    c+d\log(\sigma_{i} \varepsilon)\leq \log(\sigma_{i} y^{1-d}z)\leq C+d\log(\sigma_{i} \varepsilon)\quad (1\leq i\leq d).
  \end{equation*}
  The vector with entries~$\log\sigma_{i}y^{1-d}z$ lies in the subspace~$E=(1,\ldots,1)^{\perp}$ and as the constants are allowed to depend on the covolume of~$\mathfrak{o}_{>0}^{\times}$, the statement follows from Dirichlet's unit theorem and the finite index of~$\mathfrak{o}_{>0}^{\times}$ in~$\mathfrak{o}^{\times}$.
\end{proof}
The second corollary states that if we replace an element in~$\mathfrak{o}$ by an associated element, the error introduced can be absorbed in the lattice~$\Gamma$, up to a term bounded independently of our original element.
\begin{corollary}\label{cor:unitdecomposition}
  There is a compact subset~$C\subset A$ such that for all~$\varepsilon\in\mathfrak{o}_{>0}^{\times}$ there are~$\gamma\in\Gamma\cap A$ and~$g\in C$such that~$a_{\alpha}(\varepsilon)=\gamma g$.
\end{corollary}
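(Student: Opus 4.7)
The plan is to exploit the fact that by Dirichlet's unit theorem (as invoked in the previous subsection) the group $\mathfrak{o}_{>0}^{\times}$ is free abelian of rank $d-1$, with chosen generators $\varepsilon_{1},\ldots,\varepsilon_{d-1}$, and that by the choice made above $h_{j}=\exp(H_{j})\in\Gamma\cap A$. The corollary then reduces to a one-line Euclidean approximation in the Lie algebra. First I would note that the assignment $x\mapsto a_{\alpha}(x)$ is multiplicative on totally positive elements, since $\sigma$ and the coordinatewise $\alpha$-th power are multiplicative, and that directly from the definitions $a_{\alpha}(\varepsilon_{j})=\exp(\alpha H_{j})$. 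Writing the given $\varepsilon$ uniquely as $\varepsilon=\prod_{j=1}^{d-1}\varepsilon_{j}^{n_{j}}$ with $n_{j}\in\mathbb{Z}$, this gives the closed-form expression
\begin{equation*}
  a_{\alpha}(\varepsilon)=\exp\Big(\alpha\sum_{j=1}^{d-1}n_{j}H_{j}\Big).
\end{equation*}

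Next I would approximate $\alpha\sum_{j}n_{j}H_{j}$ by a nearby point of the $\mathbb{Z}$-lattice spanned by $H_{1},\ldots,H_{d-1}$. Concretely, set $m_{j}=\lfloor\alpha n_{j}\rfloor\in\mathbb{Z}$ and $\gamma=\prod_{j}h_{j}^{m_{j}}=\exp(\sum_{j}m_{j}H_{j})\in\Gamma\cap A$. Since $A$ is abelian, the element $g:=\gamma^{-1}a_{\alpha}(\varepsilon)$ equals $\exp(\sum_{j}(\alpha n_{j}-m_{j})H_{j})$, and by construction each coefficient $\alpha n_{j}-m_{j}$ lies in $[0,1)$ uniformly in $\varepsilon$ and uniformly in $\alpha\in(0,1)$. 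Hence $g$ belongs to the set
\begin{equation*}
  C=\exp\Big\{\sum_{j=1}^{d-1}t_{j}H_{j}\,;\,t_{j}\in[0,1]\Big\}\subset A,
\end{equation*}
which is the continuous image of a compact box and therefore compact; crucially it depends neither on $\varepsilon$ nor on $\alpha$.

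There is no real obstacle here: the entire content is the cocompactness of the image of $\Gamma\cap A$ inside the norm-zero hyperplane of $\mathfrak{a}$ (which is exactly what Dirichlet's unit theorem and the proof of the preceding proposition provide), and the only thing that has to be verified carefully is that the resulting fundamental box can be chosen independently of $\alpha$. The latter is automatic because the fractional-part bound $0\le \alpha n_{j}-m_{j}<1$ is insensitive to $\alpha$. In particular the absence of the rational direction $H_{d}$ in the expansion of $\log a_{\alpha}(\varepsilon)$---a consequence of $N(\varepsilon)=1$---is what makes the argument work and confines $g$ to a compact piece of $A$ rather than merely to a coset of $\Gamma\cap A$.
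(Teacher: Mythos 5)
Your proof is correct and follows essentially the same route as the paper: write $\varepsilon=\prod_j\varepsilon_j^{n_j}$, so $a_\alpha(\varepsilon)=\exp(\alpha\sum_j n_jH_j)$, and split this Lie algebra element into a point of the lattice $\mathbb{Z}H_1+\cdots+\mathbb{Z}H_{d-1}$ (giving $\gamma\in\Gamma\cap A$) plus a remainder in a compact fundamental domain (giving $g\in C$); the only cosmetic difference is that you realize the fundamental domain explicitly as the unit box via floor functions, while the paper invokes an arbitrary fundamental domain with compact closure.
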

\begin{proof}
  Using Dirichlet's unit theorem and the discussion of totally positive units, we know that~$\varepsilon=\varepsilon_{1}^{n_{1}}\cdots\varepsilon_{d-1}^{n_{d-1}}$, where the~$\varepsilon_{1},\ldots,\varepsilon_{d-1}\in\mathfrak{o}_{>0}^{\times}$ form a set of generators. It follows that
  \begin{equation*}
    a_{\alpha}(\varepsilon)=\exp_{G}(\underbrace{\alpha n_{1}H_{1}+\cdots+\alpha n_{d-1}H_{d-1}}_{\in H_{d}^{\perp}})=\exp_{G}(v+\lambda)
  \end{equation*}
  where~$\lambda$ lies in the~$\mathbb{Z}$-module~$\Lambda$ generated by~$H_{1},\ldots,H_{d-1}$ and~$v\in E$ is contained in any fundamental domain~$F$ for~$\Lambda$. As~$\Lambda$ is a cocompact lattice in~$\Rspan\{H_{1},\ldots,H_{d-1}\}\cong\R^{d-1}$, we can assume that~$\cl{F}\subseteq\mathfrak{a}$ is compact. 
  Therefore~$a_{\alpha}(\varepsilon)\in\exp_{G}(F)a(z)$ for some~$z\in\mathfrak{o}_{>0}^{\times}$ and as~$\exp_{G}(\cl{F})$ is compact, the claim follows.
\end{proof}
Using these corollaries to Dirichlet's unit theorem, we can prove the following analog of equidistribution of large horospheres:
\begin{proposition}\label{prop:equidistributionoflonghorospheres}
  There exist~$\kappa>0$ and an~$\Ltwo$-Sobolev norm~$\mathcal{S}$ on~$C_{c}^{\infty}(\Gamma\backslash G)$ such that for all~$f\in C_{c}^{\infty}(\Gamma\backslash G)$ and for all~$y\in\mathfrak{o}$ totally positive
  \begin{equation*}
    \bigg\lvert\int_{\Gamma Ua_{\alpha}(y)}f-\int_{\Gamma\backslash G}f\bigg\rvert\ll N(y)^{-\kappa\alpha}S(f)
  \end{equation*}
\end{proposition}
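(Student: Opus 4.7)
The plan is to reduce Proposition~\ref{prop:equidistributionoflonghorospheres} to Proposition~\ref{prop:banana} by a balancing trick based on the two corollaries to Dirichlet's unit theorem established in Section~\ref{sec:dirichlet}. The obstruction to applying Proposition~\ref{prop:banana} directly is that, for a generic totally positive $y\in\mathfrak{o}$, the element $a_\alpha(y)$ typically does not lie in $A_+$: its Galois components can be arbitrarily unbalanced, some roots $\lambda_i$ will be negative on $\log a_\alpha(y)$, and the Harish-Chandra bound~\eqref{eq:boundharishchandra} is thus unavailable. Balancing $y$ by an appropriate unit and then absorbing the resulting translation into $\Gamma$ modulo a bounded piece will cure this.

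First I would invoke Corollary~\ref{cor:rankone} to produce a unit $\varepsilon\in\mathfrak{o}_{>0}^\times$ such that $z=\varepsilon y$ satisfies $\sigma_i z\asymp N(y)^{1/d}$ uniformly in $i$; this forces $\lambda_i(\log a_\alpha(z))=2\alpha\log\sigma_i z>0$ for every $i$ once $N(y)$ is sufficiently large, placing $a_\alpha(z)$ deep in $A_+$. The regime of bounded $N(y)$ is handled trivially via the $L^\infty$-bound coming from~\eqref{eq:sobolevembedding} and can be absorbed into the implicit constant. Second, I would apply Corollary~\ref{cor:unitdecomposition} to $\varepsilon^{-1}$ to write $a_\alpha(\varepsilon^{-1})=\gamma g$ with $\gamma\in\Gamma\cap A$ and $g$ in a fixed compact set $C\subseteq A$ independent of $y$.

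The heart of the argument is then the orbit identity
\begin{equation*}
\Gamma U\,a_\alpha(y)=\Gamma U\gamma g\,a_\alpha(z)=\Gamma U g\,a_\alpha(z)=\Gamma g U\,a_\alpha(z)=\bigl(\Gamma U\,a_\alpha(z)\bigr)\cdot g,
\end{equation*}
valid as subsets of $\lquot{\Gamma}{G}$ because $\gamma\in\Gamma$ is absorbed on the left while both $\gamma$ and $g$ lie in $A$ and normalize $U$. Right-multiplication by $g$ is a diffeomorphism of $\lquot{\Gamma}{G}$ that intertwines the $U$-actions via the Lie group automorphism $u\mapsto g^{-1}ug$, hence identifies the unique normalized $U$-invariant probability measures on the two closed orbits. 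This gives the identity $\mu_{a_\alpha(y)}(f)-E_f=\mu_{a_\alpha(z)}(g\cdot f)-E_{g\cdot f}$, and Proposition~\ref{prop:banana} applied to $a_\alpha(z)\in A_+$ bounds the right-hand side by $e^{-\kappa\varrho_+(\log a_\alpha(z))}\mathcal{S}(g\cdot f)$. Since $g$ ranges in the compact set $C$, Lemma~\ref{lem:shiftedsobolevnorms} yields $\mathcal{S}(g\cdot f)\ll\mathcal{S}(f)$ with implicit constant uniform in $y$.

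It remains to evaluate $\varrho_+(\log a_\alpha(z))$. Expanding $\log a_\alpha(z)=\sum_{j=1}^d\alpha_j H_j$ in the basis chosen in Section~\ref{sec:dirichlet} and summing the defining linear equations over the $d$ embeddings, the unit relations $\sum_i\log\sigma_i\varepsilon_j=\log N(\varepsilon_j)=0$ for $j<d$ collapse the computation to $d\alpha_d=\alpha\log N(z)=\alpha\log N(y)$. Inserting into~\eqref{eq:choiceofroots} produces $\varrho_+(\log a_\alpha(z))=d\alpha_d=\alpha\log N(y)$, yielding the exponential factor $N(y)^{-\kappa\alpha}$ claimed. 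The genuinely delicate step is the measure-theoretic content of the orbit identity in the third paragraph: one has to verify that right-multiplication by $g$ really does identify the two normalized $U$-invariant probability measures, which ultimately rests on uniqueness of the $U$-invariant probability measure on each closed $U$-orbit together with $g\in A$ normalizing $U$. The remaining steps are bookkeeping around the rank-one input.
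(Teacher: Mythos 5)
Your proposal is correct and follows essentially the same route as the paper: both reduce to Proposition~\ref{prop:banana} by balancing $y$ with a totally positive unit (Corollary~\ref{cor:rankone}), absorbing that unit into $\Gamma$ up to a bounded element of $A$ (Corollary~\ref{cor:unitdecomposition}), using that $A$ normalizes $U$ to transport the normalized orbit measure, and controlling $\mathcal{S}(g\cdot f)$ via Lemma~\ref{lem:shiftedsobolevnorms}. The only cosmetic difference is that the paper first writes $a_{\alpha}(\varepsilon y)=a_{\ast}h$ with $a_{\ast}$ exactly proportional to $H_{d}$ and $h$ in a fixed compact subset of $A$ before invoking Proposition~\ref{prop:banana}, whereas you apply that proposition directly to $a_{\alpha}(\varepsilon y)\in A_{+}$ (handling bounded $N(y)$ trivially) and compute $\varrho_{+}(\log a_{\alpha}(\varepsilon y))=\alpha\log N(y)$ from the norm-one relations; both yield the bound $N(y)^{-\kappa\alpha}\mathcal{S}(f)$.
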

\begin{proof}
  Let~$\varepsilon$ be any totally positive unit. As of Corollary \ref{cor:unitdecomposition}, we have~$a_{\alpha}(\varepsilon)=\gamma g$ for~$\gamma\in\Gamma\cap A,g\in C\cap A$, where~$C$ is a fixed, compact subset of~$G$. As~$A$ normalizes~$U$, it follows that~$\Gamma Ua_{\alpha}(\varepsilon y)=\Gamma Ua_{\alpha}(y)g$ and hence
  \begin{equation*}
    \bigg\lvert\int_{\Gamma a_{\alpha}(y)}f-\int_{\Gamma\backslash G}f\bigg\rvert=\bigg\lvert\int_{\Gamma a_{\alpha}(\varepsilon y)}(g\cdot f)-\int_{\Gamma\backslash G}(g\cdot f)\bigg\rvert.
  \end{equation*}
  Using Lemma \ref{lem:shiftedsobolevnorms} and~$N(\varepsilon)=1$ for all~$\varepsilon\in\mathfrak{o}_{>0}^{\times}$, it suffices to prove the statement for some element associated to~$y$ by a totally positive unit. As of Corollary \ref{cor:rankone}, there is~$\delta\in(0,1)$ depending on~$\field$ and~$\alpha$ and a totally positive unit~$\varepsilon\in\mathfrak{o}_{>0}^{\times}$ such that 
\begin{equation*}
  (1-\delta)N(\varepsilon y)^{\frac{\alpha}{d}}\leq\sigma_{i}(\varepsilon y)^{\alpha}\leq(1+\delta)N(\varepsilon y)^{\frac{\alpha}{d}},
\end{equation*}
i.e.~$a_{\alpha}(\varepsilon y)$ is almost a homothety. It follows that there is a compact subset~$M_{\alpha}$ of~$A$ independent of~$y$ and~$\varepsilon$ such that denoting
\begin{equation*}
  a_{\ast}=\exp\big((\tfrac{\alpha}{d}\log N(y))H_{d}\big)
\end{equation*}
we have~$a_{\alpha}(\varepsilon y)=a_{\ast}h$ for some~$h\in M_{\alpha}$. Using Proposition \ref{prop:banana}, Equation \eqref{eq:choiceofroots}, and Lemma \ref{lem:shiftedsobolevnorms}, it follows that for all~$f\in C_{c}^{\infty}(\Gamma\backslash G)$
\begin{align*}
  \bigg\lvert\int_{\Gamma Ua_{\alpha}(\varepsilon y)}f-&\int_{\Gamma\backslash G}f\bigg\rvert=\bigg\lvert\int_{\Gamma Ua_{\ast}}(h\cdot  f)-\int_{\Gamma\backslash G}(h\cdot f)\bigg\rvert\\
  &\ll N(\varepsilon y)^{-\kappa\alpha}\mathcal{S}(h\cdot f)\ll N(y)^{-\kappa\alpha}\mathcal{S}(f).
\end{align*}
\end{proof}
\section{Rational Points}\label{sec:rationalpoints}
In this section we use the formula obtained in Section~\ref{sec:horospheres} to show equidistribution of rational points of a fixed denominator on the orbits~$\Gamma Ua_{\alpha}(y)$. To this end we will identify~$U\cong\mathbb{R}^{d}$ and we will examine the field~$\field$ as a subset of~$U$. For ease of notation, we define an action of~$(\mathbb{R}\setminus\{0\})^{d}$ on~$\mathbb{R}^{d}$ by coordinatewise multiplication. Given a totally positive element~$x\in \field$ and~$\beta\in\mathbb{R}$, we denote by~$x^{\beta}$ the vector obtained by taking the coordinatewise~$\beta$-th power of the image of~$x$ under the embedding defined in Section \ref{sec:setup}. We will sometimes write~$v/x=x^{-1}v$ for~$v\in\R^{d}$ and~$x\in \field$ totally positive. Using this notation, one finds that~$\Gamma Ua_{\alpha}(y)\cong\mathbb{R}^{d}/y^{2\alpha}\mathfrak{o}$ for all totally positive~$y\in\mathfrak{o}$. Note that the lattice~$y^{2\alpha}\mathfrak{o}\subseteq\R^{d}$ has covolume~$N(y)^{2\alpha}\mathrm{covol}(\mathfrak{o})$. In the special case~$\alpha=\frac{1}{2}$, the set~$y^{2\alpha}\mathfrak{o}$ is the lattice given by embedding the principal ideal~$y\mathfrak{o}$ in~$\mathbb{R}^{d}$. Consider the subset of~$\R^{d}/y\mathfrak{o}$ corresponding to the image of~$\mathfrak{o}/y\mathfrak{o}$, which is given by the embedding~$\gamma+(y)\mapsto y^{-1}\gamma+\mathfrak{o}$.
\begin{definition}
  We denote by 
  \begin{equation*}
    \Pcal_{y}^{\alpha}=\big\{\Gamma u_{y^{-1}j}a_{\alpha}(y)\mid j\in\mathfrak{o}/y\mathfrak{o}\big\}\subset \Gamma Ua_{\alpha}(y).
  \end{equation*}
  the \emph{set of rational points of denominator~$y$} on~$\Gamma Ua_{\alpha}(y)$.
\end{definition}
For what follows,~$\mathcal{F}$ denotes a symmetric fundamental parallelepiped for~$\mathfrak{o}$ in~$\mathbb{R}^{d}$. In order to prove Proposition \ref{prop:rationalpoints}, we need the following elementary analogy to the situation in~$\mathbb{R}$.
\begin{lemma}\label{lem:injectivesets}
  Let~$\alpha\in[0,1]$, ~$y\in\mathfrak{o}$ totally positive,~$\mathcal{R}_{y}$ a fixed choice of representatives for~$\mathfrak{o}/y\mathfrak{o}$, and~$\lambda\in[-1,1]$. If~$j,j'\in\mathcal{R}_{y}$ are distinct, then
  \begin{equation*}
    \big(y^{2\alpha-1}\lambda\mathcal{F}+y^{2\alpha-1}j\big)\cap\big(y^{2\alpha-1}\lambda\mathcal{F}+y^{2\alpha-1}j'\big)=\emptyset
  \end{equation*}
  In particular, the set~$y^{2\alpha-1}\lambda\mathcal{F}$ is injective on~$\mathcal{P}_{y}^{\alpha}$ in the sense of Proposition \ref{prop:formularationalpoints}. Moreover, the set
  \begin{equation*}
    \mathcal{F}_{y,\alpha}=y^{2\alpha-1}\bigg(\bigsqcup_{j\in\mathcal{R}_{y}}\mathcal{F}+j\bigg)
  \end{equation*}
  is a fundamental domain for~$\mathbb{R}^{d}/y^{2\alpha}\mathfrak{o}$.
\end{lemma}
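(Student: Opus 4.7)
The plan is to establish the three assertions in turn. The central content is the first (geometric disjointness in $\R^d$); once this is proved, the fundamental-domain statement falls out from a scaling of the analogous fact for $\mathfrak{o}/y\mathfrak{o}$, and injectivity on $\mathcal{P}_y^\alpha$ becomes a matter of observing that everything lives inside this fundamental domain.

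For the disjointness claim, I would assume the intersection is non-empty and extract $f,f'\in\mathcal{F}$ with $y^{2\alpha-1}\lambda(f-f')=y^{2\alpha-1}(j'-j)$. Total positivity of $y$ makes coordinatewise scaling by $y^{2\alpha-1}$ invertible, so this reduces to $\lambda(f-f')=j'-j\in\mathfrak{o}$. Expanding $f,f'$ in a $\Z$-basis $v_{1},\dots,v_{d}$ of $\mathfrak{o}$ adapted to $\mathcal{F}$, the coordinates of $f-f'$ all have absolute value (strictly) less than $1$, hence the coordinates of $\lambda(f-f')$ are bounded in absolute value by $|\lambda|\leq 1$; for an element of $\mathfrak{o}$ these coordinates must be integers, forcing them to vanish. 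Hence $j'-j=0$, contradicting $j\neq j'$.

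The fundamental-domain statement is then a scaling argument: the decomposition $\mathfrak{o}=\bigsqcup_{j\in\mathcal{R}_y}(j+y\mathfrak{o})$ combined with the tiling $\R^d=\bigsqcup_{\gamma\in\mathfrak{o}}(\mathcal{F}+\gamma)$ shows that $\bigsqcup_{j\in\mathcal{R}_y}(\mathcal{F}+j)$ is a fundamental domain for $y\mathfrak{o}$; coordinatewise multiplication by $y^{2\alpha-1}$ is an $\R$-linear isomorphism sending $y\mathfrak{o}$ to $y^{2\alpha}\mathfrak{o}$, so $\mathcal{F}_{y,\alpha}$ is a fundamental domain for $y^{2\alpha}\mathfrak{o}$. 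For injectivity on $\mathcal{P}_y^\alpha$, convexity of $\mathcal{F}$ together with $0\in\mathcal{F}$ and $|\lambda|\leq 1$ give $\lambda\mathcal{F}\subseteq\mathcal{F}$, so each translate $y^{2\alpha-1}\lambda\mathcal{F}+y^{2\alpha-1}j$ is contained in $\mathcal{F}_{y,\alpha}$. Identifying $\Gamma U a_\alpha(y)$ with $\R^d/y^{2\alpha}\mathfrak{o}$, the required injectivity of $(u,x)\mapsto xu$ translates into disjointness of these translates modulo $y^{2\alpha}\mathfrak{o}$, and since they already sit inside a fundamental domain for that lattice, the disjointness in $\R^d$ from the first step survives the projection.

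There is no serious obstacle here: the argument is a direct generalisation of the analogous computation for $\SL_2(\Z)\backslash\SL_2(\R)$. The only technical nuisance is the choice of a boundary convention for the symmetric parallelepiped $\mathcal{F}$ that genuinely tiles $\R^d$ (say, by making $\mathcal{F}$ half-open in each coordinate) so that the strict inequality $|\lambda c_k|<1$ is available and the resulting translates are pairwise disjoint as sets rather than merely up to measure zero.
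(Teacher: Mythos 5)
Your argument is correct and takes essentially the same route as the paper: one undoes the coordinatewise scaling by $y^{2\alpha-1}$ (invertible by total positivity), reduces to the fact that $\lambda\mathcal{F}$ is injective for translations by $\mathfrak{o}$, and obtains the fundamental domain statement by rescaling the tiling for $y\mathfrak{o}$. Your explicit coordinate computation in a $\mathbb{Z}$-basis and the half-open boundary convention merely spell out details the paper's proof treats as immediate.
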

\begin{proof}
  The action of~$(\mathbb{R}\setminus\{0\})^{d}$ on~$\mathbb{R}^{d}$ is given by multiplation with invertible diagonal matrices, hence it suffices to show that~$(\lambda\mathcal{F}+j)\cap(\lambda\mathcal{F}+j')=\emptyset$. This follows from injectivity of~$\lambda\mathcal{F}$ for~$\mathfrak{o}$ and~$j-j'\in\mathfrak{o}$. The last part of the statement is clear.
\end{proof}
The main statement of this section is the following
\begin{proposition}\label{prop:rationalpoints}
  There are~$\delta,\kappa_{2}>0$, an~$\Ltwo$-Sobolev norm~$\mathcal{S}$ on~$C_{c}^{\infty}(\Gamma\backslash G)$ depending only on the number field~$\field$ such that for all~$\alpha\in(0,\frac{1}{2}+\delta)$, all totally positive~$y\in\mathfrak{o}$, and all~$f\in C_{c}^{\infty}(\Gamma\backslash G)$
  \begin{equation*}
    \bigg\lvert\frac{1}{N(y)}\sum_{x\in\Pcal_{y}^{\alpha}}f(x)-\int_{\Gamma\backslash G}f\bigg\rvert\ll N(y)^{-\kappa_{2}\alpha}\mathcal{S}(f).
  \end{equation*}
\end{proposition}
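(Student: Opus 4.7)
The strategy is to split the range $\alpha\in(0,\tfrac12+\delta)$ into two overlapping regimes and combine a version of Proposition~\ref{prop:formularationalpoints} applicable near $\alpha=\tfrac12$ with a Riemann-sum comparison against the full horocycle measure for small~$\alpha$, calibrating the exponent $\kappa_2$ so that both contribute the bound $N(y)^{-\kappa_2\alpha}\mathcal{S}(f)$.

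\textbf{Anchor at $\alpha=\tfrac12$ and translation.} At $\alpha=\tfrac12$ the set $\mathcal{P}_y^{1/2}$ is right-invariant under the unipotent $\gamma=u_t\in\Gamma\cap U$ corresponding to $t=\sigma 1=(1,\dots,1)$: indeed $a_{1/2}(y)u_t=u_{t/y}a_{1/2}(y)$, so right multiplication by $\gamma$ implements the shift $j\mapsto j+1$ on $\mathfrak{o}/y\mathfrak{o}$. Corollary~\ref{cor:rankone} permits replacement of $y$ by an associate $\varepsilon y$ with $\sigma_i(\varepsilon y)\asymp N(y)^{1/d}$, and Corollary~\ref{cor:unitdecomposition} together with Lemma~\ref{lem:shiftedsobolevnorms} absorbs the resulting factor $a_{1/2}(\varepsilon)$ into $\Gamma$ and a uniformly bounded Sobolev-norm distortion. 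After this reduction $a_{1/2}(\varepsilon y)\in A_+$ has $\varrho_+(\log a_{1/2}(\varepsilon y))=\tfrac12\log N(y)$, and Proposition~\ref{prop:formularationalpoints} applied with $F=\lambda\mathcal{F}$---injective on $\mathcal{P}_y^{1/2}$ by Lemma~\ref{lem:injectivesets}---gives the error $\lambda^{-d/2}N(y)^{-\eta/2}\mathcal{S}(f)+\lambda\mathcal{S}(f)$. Optimizing $\lambda=N(y)^{-\eta/(d+2)}$ gives the bound $N(y)^{-\eta/(d+2)}\mathcal{S}(f)$ at $\alpha=\tfrac12$. For neighbouring~$\alpha$ I use the identity $\mathcal{P}_y^\alpha=\mathcal{P}_y^{1/2}\cdot a_{\alpha-1/2}(y)$ and apply the $\alpha=\tfrac12$ bound to $g=a_{\alpha-1/2}(\varepsilon y)\cdot f$; by Lemma~\ref{lem:shiftedsobolevnorms} and $\|a_{\alpha-1/2}(\varepsilon y)\|\asymp N(y)^{|\alpha-1/2|/d}$ this becomes $N(y)^{-\eta/(d+2)+2\ell|\alpha-1/2|/d}\mathcal{S}(f)$, dominated by $N(y)^{-\kappa_2\alpha}\mathcal{S}(f)$ on an interval around $\tfrac12$ whose width is controlled by $\eta$, $\ell$ and~$d$.

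\textbf{Small-$\alpha$ regime via the full horocycle.} For $\alpha$ not near $\tfrac12$ I use the decomposition
\[
\bigl|\mu_y^\alpha(f)-\textstyle\int f\bigr|\leq\bigl|\mu_y^\alpha(f)-\nu_y^\alpha(f)\bigr|+\bigl|\nu_y^\alpha(f)-\textstyle\int f\bigr|,
\]
where $\nu_y^\alpha$ is the $U$-invariant probability measure on $\Gamma Ua_\alpha(y)$. Proposition~\ref{prop:equidistributionoflonghorospheres} bounds the second term by $N(y)^{-\kappa\alpha}\mathcal{S}(f)$. For the first, Lemma~\ref{lem:injectivesets} identifies the rational points as a regular arrangement on the orbit-torus $\R^d/y^{2\alpha}\mathfrak{o}$ whose fundamental cell $y^{2\alpha-1}\mathcal{F}$ has diameter $\asymp N(y)^{(2\alpha-1)/d}$; the Lipschitz estimate of Lemma~\ref{lem:Lipshitzbound} gives $N(y)^{(2\alpha-1)/d}\mathcal{S}(f)$. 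The combined bound is $\leq N(y)^{-\kappa_2\alpha}\mathcal{S}(f)$ provided $\kappa_2\leq\kappa$ and $\alpha\leq 1/(d\kappa_2+2)$.

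\textbf{Combining and main obstacle.} Choosing $\kappa_2\leq\min\{\kappa,\,2\eta/(\ell(d+2))\}$ forces the endpoints $1/(d\kappa_2+2)$ (small-$\alpha$ regime) and $\tfrac12-\eta d/(2\ell(d+2))$ (translation regime) to cross, so together the two regimes cover $(0,\tfrac12+\delta)$ with $\delta$ of order $\eta d/(\ell(d+2))$. The main obstacle is exactly this overlap: the translation step loses a factor $N(y)^{2\ell|\alpha-1/2|/d}$ in which the exponent $\ell$ is rigid (determined by Sobolev embedding), while the effective-mixing constant $\eta$ is small, so $\kappa_2$ must be kept proportional to the spectral gap. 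A secondary technical point is the bookkeeping of the two successive Sobolev shifts---the unit normalization and the translation by $a_{\alpha-1/2}(\varepsilon y)$---which requires iterating Lemma~\ref{lem:shiftedsobolevnorms} and verifying that the resulting constants are uniform in $y$ and~$\alpha$.
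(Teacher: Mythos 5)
Your proposal is correct and follows essentially the same route as the paper: reduction to a near-homothety via Corollaries \ref{cor:rankone} and \ref{cor:unitdecomposition}, a Riemann-sum comparison with the full horosphere (Proposition \ref{prop:equidistributionoflonghorospheres}) for $\alpha$ bounded away from $\tfrac12$, Proposition \ref{prop:formularationalpoints} with the central unipotent $u_{(1,\dots,1)}$ and a shrunken injective cell from Lemma \ref{lem:injectivesets} at $\alpha=\tfrac12$, and the shift by $a_{\alpha-\frac12}(y)$ paid for through Lemma \ref{lem:shiftedsobolevnorms} to cover a neighbourhood of $\tfrac12$. The only deviations are cosmetic (your optimized scaling $\lambda=N(y)^{-\eta/(d+2)}$ of the injective cell and slightly loose explicit constants in the overlap calibration, which the paper handles by choosing $\tau$ and $\kappa_2$ small at the end).
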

\begin{proof}
  Using Corollaries \ref{cor:rankone} and \ref{cor:unitdecomposition}, we can assume without loss of generality that~
  \begin{equation}\label{eq:diagonalelement}
    \sigma_{i}y\asymp N(y)^{\frac{1}{d}}\qquad(1\leq i\leq d).
  \end{equation}
  For~$\alpha$ bounded away from~$\frac{1}{2}$, the proposition is a direct consequence of the mean-value theorem, similar in spirit to the approximation of the sparse subset by (part of) the full orbit in the proof of Proposition \ref{prop:formularationalpoints} in Section \ref{sec:horospheres}. Let~$0<\alpha<\frac{1}{2}-\tau$ for some~$\tau>0$. Using Proposition \ref{prop:equidistributionoflonghorospheres}, it suffices to approximate the average over the rational points to the average along~$\Gamma Ua_{\alpha}(y)$. As of Lemma \ref{lem:injectivesets}, the set
  \begin{equation*}
    \mathcal{F}_{y}=y^{-1}\bigg(\bigsqcup_{j\in\mathcal{R}_{y}}\mathcal{F}+j\bigg)
  \end{equation*}
  is a fundamental domain for~$\mathfrak{o}$, and the Haar measure on~$\Gamma Ua_{\alpha}(y)$ is given by
  \begin{equation*}
    \int_{\Gamma Ua_{\alpha}(y)}f=\frac{1}{\mathrm{vol}(\mathcal{F}_{y})}\int_{\mathcal{F}_{y}}(a_{\alpha}(y)\cdot f)(\Gamma u_{t})\der{}t,\quad\big(f\in C_{c}(\Gamma\backslash G)\big)
  \end{equation*}
  where~$\der{}t$ denotes Lebesgue measure on~$\R^{d}$. For~$f\in C_{c}^{\infty}(\Gamma\backslash G)$ follows
\begin{align*}
    \frac{1}{N(y)}&\sum_{x\in\Pcal_{y}^{\alpha}}f(x)-\int_{\Gamma Ua_{\alpha}(y)}f\\
    &=\frac{1}{N(y)}\sum_{k\in \mathcal{R}_{y}}\bigg(f\big(\Gamma u_{\frac{k}{y}}a_{\alpha}(y)\big)-\frac{N(y)}{\mathrm{vol}(\mathcal{F}_{y})}\int_{y^{-1}\mathcal{F}}f\big(\Gamma u_{t+\frac{k}{y}}a_{\alpha}(y)\big)\der{}t\bigg)\\
    &=\frac{1}{N(y)}\sum_{k\in \mathcal{R}_{y}}\frac{1}{\mathrm{vol}(\mathcal{F})}\int_{\mathcal{F}}f(\Gamma a_{\alpha}(y)u_{y^{2\alpha-1}k})-f(\Gamma a_{\alpha}(y)u_{y^{2\alpha-1}(t+k)})\der{}t.
  \end{align*}
  The exponential map~$\exp:\mathfrak{g}\to G$ is bi-Lipschitz on a neighbourhood of~$0\in\mathfrak{g}$ (cf.~\cite{volume-one}), hence denoting by~$d_{G}$ a left-invariant metric on~$G$, we obtain
  \begin{equation*}
    d_{G}(u_{t},u_{s})\ll\lVert t-s\rVert_{\infty}\qquad (t,s\in\mathbb{R}^{d}).
  \end{equation*}
  Using the mean-value theorem, inequality \eqref{eq:sobolevembedding} and assumption \eqref{eq:diagonalelement}, we find that for all~$t\in\mathcal{F}$
  \begin{align*}
    \lvert f(\Gamma a_{\alpha}(y)u_{y^{2\alpha-1}k})-f(\Gamma&a_{\alpha}(y)u_{y^{2\alpha-1}(t+k)})\rvert\ll\lVert\nabla f\rVert_{\infty}\lVert y^{2\alpha-1}t\rVert_{\infty}\\
    &\ll\mathcal{S}(f)\lvert N(y)\rvert^{\frac{2\alpha-1}{d}}\mathrm{diam}(\mathcal{F})
  \end{align*}
  for some~$\Ltwo$-Sobolev norm~$\mathcal{S}$ on~$C_{c}^{\infty}(\Gamma\backslash G)$. Plugging this bound into the preceding expression, it follows that
  \begin{equation*}
    \bigg\lvert\frac{1}{N(y)}\sum_{x\in\Pcal_{y}^{\alpha}}f(x)-\int_{\Gamma Ua_{\alpha}(y)}f\bigg\rvert\ll N(y)^{\frac{2\alpha-1}{d}}\mathcal{S}(f).
  \end{equation*}
  Now choose~$\kappa_{2}>0$ such that~$\frac{2\alpha-1}{d}\leq -\kappa_{2}\alpha$ uniformly for all~$\alpha\in(0,\frac{1}{2}-\tau]$. Note for the following that we can assume that~$\tau$ was arbitrarily small, potentially at the expense of having to choose a smaller~$\kappa_{2}$. Combining this bound with the effective equidistribution found in Proposition \ref{prop:equidistributionoflonghorospheres}, the claim follows for~$\alpha\in(0,\frac{1}{2}-\tau]$

  Assume now that~$\alpha=\frac{1}{2}$. Using the notation from Section \ref{sec:dirichlet}, assumption \eqref{eq:diagonalelement}, and Corollary \ref{cor:rankone}, we have
  \begin{equation}\label{eq:specialshape}
    a_{\alpha}(y)=\exp\big((\tfrac{1}{2d}\log N(y))H_{d}\big)g
  \end{equation}
  for some~$g$ in a fixed compact subset of~$A$. Note that~$g$ has nonnegative entries because~$y$ is totally positive. In what follows, we let~$T=\frac{1}{2d}\log N(y)$. Let~$u_{1}$ denote the diagonal embedding of~$(\begin{smallmatrix}1 & 1 \\ 0 & 1\end{smallmatrix})$ in~$G$. Then~$\mathcal{P}_{y}^{\alpha}$ is~$u_{1}$-invariant,~$u_{1}\in\Gamma$ is central in~$U$ and generates an unbounded subgroup. Using \eqref{eq:choiceofroots} we have
  \begin{equation*}
    \rho_{+}(\log a_{\alpha}(y))=\tfrac{1}{2}\log N(y)+O(1).
  \end{equation*}
  Choose the set~$F=N(y)^{-\frac{\eta}{2d}}g^{-1}\mathcal{F}g\subseteq U$ for~$\eta$ as in Proposition \ref{prop:formularationalpoints}. Then~$F$ is injective on~$\mathcal{P}_{y}^{\alpha}$. Furthermore, we have~$\mathrm{vol}(F)\asymp N(y)^{-\frac{\eta}{2}}\mathrm{vol}(\mathcal{F})$ and~$\mathrm{diam}(F)\asymp N(y)^{-\frac{\eta}{2d}}\mathrm{diam}(\mathcal{F})$. Hence Proposition \ref{prop:formularationalpoints} implies that
  \begin{equation*}
    \bigg\lvert\frac{1}{N(y)}\sum_{x\in\mathcal{P}_{y}^{\alpha}}f(x)-\int_{\Gamma\backslash G}f\bigg\rvert\ll\big(N(y)^{-\alpha\frac{\eta}{2}}+N(y)^{-\alpha\frac{\eta}{d}}\mathrm{diam}(\mathcal{F})\big)\mathcal{S}(f).
  \end{equation*}

  Finally, effective equidistribution for a given~$\alpha$ can be extended to a neighbourhood of~$\alpha$ at the expense of a slightly worse exponent. To this end let~$\delta>0$ and assume that the statement in the proposition is true for some~$\alpha\in(0,1)$ with a positive rate~$\kappa^{\prime}$ (possibly) depending on~$\alpha$. Similarly to before, one obtains from Lemma \ref{lem:shiftedsobolevnorms} that
  \begin{equation*}
    \bigg\lvert\frac{1}{N(y)}\sum_{x\in\mathcal{P}_{y}^{\alpha+\delta}}f(x)-\int_{\Gamma\backslash G}f\bigg\rvert\ll N(y)^{-\alpha\kappa^{\prime}}\mathcal{S}(a_{\delta}(y)\cdot f)\ll N(y)^{-\alpha\kappa^{\prime}+\frac{2\ell\delta}{d}}\mathcal{S}(f),
  \end{equation*}
  where~$\ell>0$ is the degree of~$\mathcal{S}$. Returning to the explicit case~$\alpha=\frac{1}{2}$, fix~$\tau>0$ such that~$1+2\delta\neq 0$ and~$\frac{d\kappa^{\prime}-4\delta\ell}{d+2d\delta}>0$ holds for all~$\lvert\delta\rvert\leq 2\tau$. Let~$\kappa^{\prime\prime}=\sup_{\lvert\delta\rvert\leq2\tau}\frac{d\kappa^{\prime}-4\delta\ell}{d+2d\delta}>0$, then it follows that the proposition holds for~$\kappa_{2}<\min\{-\tfrac{2\tau}{d},\kappa^{\prime\prime}\}$.
\end{proof}
\section{Primitive rational points on expanding horospheres}\label{sec:primitiverationalpoints}
We know from Section \ref{sec:rationalpoints} that for fixed~$\alpha\in(0,\frac{1}{2}+\delta)$ the counting measure~$\mu_{y}^{\alpha}$ on~$\Pcal_{y}^{\alpha}$ converges towards the Haar measure on~$\Gamma\backslash G$ as~$\lvert N(y)\rvert\to\infty$. From this, one deduces relatively easily that for~$\lvert N(y)\rvert\to\infty$ the primitive rational points
\begin{equation*}
  \mathcal{P}_{y}^{\alpha,\times}=\set{\Gamma u_{y^{-1}j}a_{\alpha}(y)}{j\in(\mathfrak{o}/y\mathfrak{o})^{\times}}
\end{equation*}
equidistribute towards the~$G$-invariant probability measure on~$\Gamma\backslash G$, as long as the sets~$\Pcal_{y}^{\alpha,\times}$ are sufficiently large in relation to~$\mathcal{P}_{y}^{\alpha}$, i.e.~as long as the cardinality~$\phi(y)=\big\lvert(\mathfrak{o}/y\mathfrak{o})^{\times}\big\rvert$ is not too small. To this end, denote~$\mathcal{P}_{y}^{\alpha,0}=\mathcal{P}_{y}^{\alpha}\setminus\mathcal{P}_{y}^{\alpha,\times}$, and let~$\mu_{y}^{\alpha,\times}$ and~$\mu_{y}^{\alpha,0}$ denote the normalized counting measures on~$\Pcal_{y}^{\alpha,\times}$ and~$\Pcal_{y}^{\alpha,0}$ respectively, so that
\begin{equation}\label{eq:convexcombination}
  \mu_{y}^{\alpha}=\frac{\phi(y)}{\lvert N(y)\rvert}\mu_{y}^{\alpha,\times}+\frac{\lvert N(y)\rvert-\phi(y)}{\lvert N(y)\rvert}\mu_{y}^{\alpha,0}
\end{equation}
Let~$(y_{k})_{k\in\N}$ be a sequence in~$\mathfrak{o}$ satisfying~$\lvert N(y_{k})\rvert\to\infty$ and let~$\nu$ be a weak-$\ast$ limit of~$\{\mu_{y_{k}}^{\alpha,\times};k\in\N\}$. After restricting to a subsequence, we can assume that~$\frac{\phi(y_{k})}{\lvert N(y_{k})\rvert}\to\lambda\in[0,1]$. Let~$f\in C_{c}(\Gamma\backslash G)$, then
\begin{align*}
  \int_{\Gamma\backslash G}f&=\lim_{k\to\infty}\mu_{y_{k}}^{\alpha}(f)=\lim_{k\to\infty}\left\{\frac{\phi(y_{k})}{\lvert N(y_{k})\rvert}\mu_{y_{k}}^{\alpha,\times}(f)+\frac{\lvert N(y_{k})\rvert-\phi(y_{k})}{\lvert N(y_{k})\rvert}\mu_{y_{k}}^{\alpha,0}(f)\right\}\\
  &=\lambda\nu(f)+(1-\lambda)\tilde{\nu}(f)
\end{align*}
for some measure~$\tilde{\nu}(f)$. By Dirichlet's unit theorem there is~$\varepsilon\in\mathfrak{o}_{>0}^{\times}$ such that~$a(\varepsilon)$ generates an unbounded subgroup of~$G$. Note that~$\Gamma u_{k/y}a_{\alpha}(y)a(\varepsilon)=\Gamma u_{\varepsilon^{2}k/y}a_{\alpha}(y)$, thus~$a(\varepsilon)$ preserves both~$\mu_{y_{k}}^{\alpha,\times}$ and~$\mu_{y_{k}}^{\alpha,0}$ as well as the~$G$-invariant probability measure on~$\Gamma\backslash G$, and it acts ergodically on~$\Gamma\backslash G$. Hence extremality of ergodic measures implies that~$\nu$ is the~$G$-invariant probability measure if~$\lambda>0$.

In what follows, we give a proof without the assumption~$\lambda>0$ by deriving an effective version, i.e.~we provide a bound of the form~$\lvert\mu_{y}^{\alpha,\times}(f)-E_{f}\rvert\ll\psi(y)S(f)$ for some~$\psi:\mathfrak{o}\to\R_{>0}$ decaying in~$N(y)$. To this end we once again adapt the reasoning from~\cite{Akka2016}.
\begin{remark}
  We want to point out that up to now we never really used that the degree~$d=[\field,\Q]$ was larger than one. The upcoming argument is the only place where it is needed. In particular, as mentioned at the end of Section~\ref{sec:horospheres}, one can deduce equidistribution of rational points on horospheres in~$\SL_{2}(\Z)\backslash\SL_{2}(\R)$ in pretty much the same way. In order to select the primitive rational points, we are going to use the assumption~$d>1$ to obtain an element acting with a spectral gap and preserving the set of primitive rational points as in the sketch before. This step is a bit trickier in the situation where~$d=1$, and involves lifting the problem to a~$p$-adic cover. This is examined in~\cite{sl2}.
\end{remark}
Throughout this section we fix a unit~$\varepsilon\in\mathfrak{o}_{>0}^{\times}$ generating a non-compact subgroup. Given~$K\in\N$, and~$f\in C_{c}^{\infty}(\Gamma\backslash G)$ real-valued, let
\begin{equation*}
  D_{K}f(x)=\frac{1}{K}\sum_{k=0}^{K-1}f\big(xa(\varepsilon)^{k})-E_{f}
\end{equation*}
be the discrepancy (for~$a(\varepsilon)$) of~$f$.
\begin{lemma}\label{lem:subcollectionstrick}
  There exists an~$\Ltwo$-Sobolev norm~$\mathcal{S}$ on~$C_{c}^{\infty}(\Gamma\backslash G)$ and some~$\varepsilon\in\mathfrak{o}_{>0}^{\times}$ such that for all real-valued~$f\in C_{c}^{\infty}(\Gamma\backslash G)$
  \begin{equation*}
    \int_{\Gamma\backslash G}(D_{K}f)^{2}\ll K^{-1}\mathcal{S}(f)^{2}.
  \end{equation*}
\end{lemma}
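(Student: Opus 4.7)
The plan is to exploit exponential mixing of the translation $x \mapsto xa(\varepsilon)$ directly, so that the double-sum version of Proposition \ref{prop:effectivevonneumann} collapses into a geometric series rather than requiring the split-range trick used in its proof.

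To begin, fix a non-torsion $\varepsilon \in \mathfrak{o}_{>0}^{\times}$, which exists because $d > 1$, so that $a(\varepsilon)^{k}$ generates an unbounded subgroup of $A$. Using bi-$K$-invariance of the Harish-Chandra function $\Xi$ together with the Cartan decomposition $G = KA_{+}K$, the value $\Xi(a(\varepsilon)^{k})$ equals $\Xi$ evaluated at a Cartan representative in $\overline{A_{+}}$ whose image under $\varrho_{+}$ grows linearly in $\lvert k\rvert$; concretely the growth rate is $\lvert k\rvert\sum_{i=1}^{d}\lvert\log\sigma_{i}\varepsilon\rvert$, which is strictly positive by the choice of $\varepsilon$. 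Substituting \eqref{eq:boundharishchandra} into \eqref{eq:matrixcoefficients} therefore produces some $\mu > 0$ and an $\Ltwo$-Sobolev norm $\mathcal{S}$ on $C_{c}^{\infty}(\Gamma\backslash G)$ with
\begin{equation*}
  \big\lvert\langle a(\varepsilon)^{k}f_{1},f_{2}\rangle - E_{f_{1}}E_{f_{2}}\big\rvert \ll e^{-\mu\lvert k\rvert}\,\mathcal{S}(f_{1})\mathcal{S}(f_{2})
\end{equation*}
for all $f_{1},f_{2}\in C_{c}^{\infty}(\Gamma\backslash G)$ and $k\in\mathbb{Z}$; the case $k<0$ follows from unitarity upon swapping $f_{1}$ and $f_{2}$.

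The computation proceeds directly from here. Since $f$ is real-valued and the Koopman action is unitary, expanding the square and using that the averaging preserves the integral $E_{f}$ yields
\begin{align*}
  \int_{\Gamma\backslash G}(D_{K}f)^{2}
  &= \frac{1}{K^{2}}\sum_{m,n=0}^{K-1}\big(\langle a(\varepsilon)^{m-n}f,f\rangle - E_{f}^{2}\big) \\
  &\ll \frac{\mathcal{S}(f)^{2}}{K^{2}}\sum_{k=-(K-1)}^{K-1}(K-\lvert k\rvert)\,e^{-\mu\lvert k\rvert}.
\end{align*}
The inner sum is bounded above by $K\sum_{k\in\mathbb{Z}}e^{-\mu\lvert k\rvert} = O(K)$, giving $\int(D_{K}f)^{2}\ll K^{-1}\mathcal{S}(f)^{2}$ as required.

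The one non-trivial step is justifying the exponential decay of $\Xi(a(\varepsilon)^{k})$, since \eqref{eq:boundharishchandra} as stated applies only to elements of $A_{+}$, whereas the diagonal matrix $a(\varepsilon)^{k}$ may have entries of mixed magnitude across the $d$ factors. This is a bookkeeping point resolved by bi-$K$-invariance of $\Xi$ and the Weyl-group action on $A$. Everything else reduces to summation of a geometric series, and the implicit constant in the final bound inherits a dependence on $\varepsilon$ and on the spectral gap constant $\kappa_{\tau}$.
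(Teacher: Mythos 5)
Your proposal is correct and follows essentially the same route as the paper: the paper likewise uses bi-$K$-invariance of $\Xi$ with the Cartan decomposition to get $\varrho_{+}\big(\log a(\varepsilon^{\ell})_{+}\big)=\sum_{i}\lvert\ell\log\sigma_{i}\varepsilon\rvert$, hence exponential decay of the matrix coefficients $\langle a(\varepsilon)^{\ell}f,f\rangle$, and then expands $\int(D_{K}f)^{2}$ directly and sums the resulting geometric series to get the $K^{-1}$ bound, explicitly bypassing Proposition \ref{prop:effectivevonneumann}. The ``bookkeeping point'' you flag about $a(\varepsilon)^{k}$ not lying in $A_{+}$ is resolved in the paper exactly as you indicate.
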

\begin{proof}
  In order to use \eqref{eq:matrixcoefficients}, note that~$G=KA_{+}K$, where~$K=\SO(2)^{d}$ and for any decomposition~$g=kg_{+}l$ ($k,l\in K$,~$g_{+}\in A_{+}$), the element~$g_{+}$ is uniquely determined by~$g$. It is well-known that the Harish-Chandra spherical function~$\Xi$ is bi-invariant under~$K$, i.e.~$\Xi(g)=\Xi(g_{+})$ (cf.~\cite[Ch.~7,~\textsection8]{Knapp1986}). Note that for~$\ell\in\Z$
  \begin{equation*}
    \varrho_{+}\big(\log a(\varepsilon^{\ell})_{+}\big)=\sum_{i=1}^{d}\lvert\ell\log\sigma_{i}(\varepsilon)\rvert\geq\lvert\ell\rvert\log\lVert a(\varepsilon)\rVert_{\infty}.
  \end{equation*}
  Hence \eqref{eq:matrixcoefficients} implies that for all~$f_{1},f_{2}\in C_{c}^{\infty}(\Gamma\backslash G)$
  \begin{equation}\label{eq:matrixcoefficientsnondiagonal}
    \lvert\langle a(\varepsilon^{\ell})f_{1},f_{2}\rangle-E_{f_{1}}\overline{E_{f_{2}}}\rvert\ll\lVert a(\varepsilon)\rVert_{\infty}^{-\kappa_{\tau}\ell}\mathcal{S}(f_{1})\mathcal{S}(f_{2}).
  \end{equation}
  We remark, that Proposition \ref{prop:effectivevonneumann} immediately implies that
  \begin{equation*}
    \int_{\Gamma\backslash G}(D_{K}f)^{2}\ll\big(K^{-\varsigma}+\lVert a(\varepsilon)\rVert_{\infty}^{-\kappa_{\tau}K^{1-\varsigma}}\big)\mathcal{S}(f),
  \end{equation*}
  which is too weak for what we want. Using Dirichlet's unit theorem, we can without loss of generality assume that~$\lVert a(\varepsilon)\rVert_{\infty}^{-\kappa_{\tau}}\geq 2$. Application of \eqref{eq:sobolevembedding} and \eqref{eq:matrixcoefficientsnondiagonal} as in the proof of Proposition \ref{prop:effectivevonneumann} yields
  \begin{align*}
    \int_{\Gamma\backslash G}(D_{K}f)^{2}&=\frac{1}{K^{2}}\bigg(K\lVert f-E_{f}\rVert_{2}^{2}+2\sum_{0\leq j<\ell<K}\langle a(\varepsilon)^{\ell-j}(f-E_{f}),(f-E_{f})\rangle\bigg)\\
    &\ll\frac{\mathcal{S}(f)^{2}}{K}+\frac{\mathcal{S}(f)^{2}}{K^{2}}\sum_{0\leq j<\ell<K}\lVert a(\varepsilon)\rVert_{\infty}^{-\kappa_{\tau}(\ell-j)}\\
    &\ll\frac{\mathcal{S}(f)^{2}}{K}+\frac{\mathcal{S}(f)^{2}}{K^{2}}\frac{\lVert a(\varepsilon)\rVert_{\infty}^{-\kappa_{\tau}(K+1)}}{(\lVert a(\varepsilon)\rVert_{\infty}^{-\kappa_{\tau}}-1)^{2}}\ll K^{-1}\mathcal{S}(f)^{2}.
  \end{align*}
\end{proof}
\begin{lemma}\label{lem:propertiessobolev}
  Given an~$\Ltwo$-Sobolev norm~$\mathcal{S}$ on~$C_{c}^{\infty}(\Gamma\backslash G)$, there exist~$\beta>0$ and an~$\Ltwo$-Sobolev norm~$\mathcal{S}_{2}$ such that for any real-valued~$f\in C_{c}^{\infty}(\Gamma\backslash G)$ and~$K\in\N$
  \begin{equation*}
    \mathcal{S}\big((D_{K}f)^{2}\big)\leq e^{\beta K}\mathcal{S}_{2}(f)^{2}.
  \end{equation*}
\end{lemma}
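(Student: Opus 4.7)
The plan is a direct combination of the product rule for Sobolev norms (Lemma~\ref{lem:productofsobolevnorms}), the triangle inequality applied to the defining average of $D_K f$, and the shifted Sobolev norm bound (Lemma~\ref{lem:shiftedsobolevnorms}). Since everything we need about the constant $E_f$ is handled the same way as in the proof of Proposition~\ref{prop:formularationalpoints}, namely that the triangle inequality together with the convention that the Sobolev norm effectively annihilates constants (as used in \eqref{eq:boundsobolevnormdiscrepancy}) lets us ignore this term, no new ideas are required.

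First I would apply Lemma~\ref{lem:productofsobolevnorms} to $f_1 = f_2 = D_K f$. This produces a new $\Ltwo$-Sobolev norm~$\mathcal{S}'$ of possibly higher degree~$\ell'$, and a constant depending only on the degrees, such that
\begin{equation*}
\mathcal{S}\bigl((D_K f)^2\bigr) \leq C_\ell\,\mathcal{S}'(D_K f)^2.
\end{equation*}
Next I would expand $D_K f = \frac{1}{K}\sum_{k=0}^{K-1} a(\varepsilon)^k\cdot f - E_f$ and apply the triangle inequality; exactly as in \eqref{eq:boundsobolevnormdiscrepancy}, the constant~$E_f$ contributes nothing, giving
\begin{equation*}
\mathcal{S}'(D_K f) \leq \frac{1}{K}\sum_{k=0}^{K-1}\mathcal{S}'\bigl(a(\varepsilon)^k\cdot f\bigr).
\end{equation*}
Lemma~\ref{lem:shiftedsobolevnorms} then bounds each summand by $\lVert a(\varepsilon)^k\rVert^{2\ell'}\mathcal{S}'(f)$, and submultiplicativity of the operator norm gives $\lVert a(\varepsilon)^k\rVert \leq \lVert a(\varepsilon)\rVert^k$. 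Since $\varepsilon$ generates a non-compact subgroup we have $\lVert a(\varepsilon)\rVert > 1$, so the geometric sum is dominated by its last term, yielding
\begin{equation*}
\mathcal{S}'(D_K f) \ll \lVert a(\varepsilon)\rVert^{2\ell' K}\,\mathcal{S}'(f).
\end{equation*}

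Squaring and combining with the first step gives $\mathcal{S}((D_K f)^2) \ll \lVert a(\varepsilon)\rVert^{4\ell' K}\mathcal{S}'(f)^2$. Setting $\beta = 4\ell'\log\lVert a(\varepsilon)\rVert$ and $\mathcal{S}_2 = \mathcal{S}'$ gives the claim. There is no real obstacle here: the argument is intentionally crude (exponential in $K$), reflecting the fact that this lemma is a trivial bound that will only be useful when paired with the much sharper Lemma~\ref{lem:subcollectionstrick} via an optimization in $K$ in the upcoming proof of Theorem~\ref{thm:mainthmnumberfield}. The only mild point of care is confirming that the constant term $E_f$ can be dropped inside $\mathcal{S}'$, which is consistent with the paper's use of Sobolev norms throughout.
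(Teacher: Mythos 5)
Your proposal is correct and follows essentially the same route as the paper: first Lemma~\ref{lem:productofsobolevnorms} to reduce to $\mathcal{S}_{2}(D_{K}f)^{2}$, then the triangle inequality together with Lemma~\ref{lem:shiftedsobolevnorms} to bound each shifted term by a power of $\lVert a(\varepsilon)\rVert$, and finally the geometric sum absorbed into $e^{\beta K}$. The handling of the constant $E_{f}$ matches the paper's own convention, so no further comment is needed.
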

\begin{proof}
  As of Lemma \ref{lem:productofsobolevnorms}, we have
  \begin{equation*}
    \mathcal{S}\big((D_{K}f)^{2}\big)\ll\mathcal{S}_{2}(D_{K}f)^{2}.
  \end{equation*}
  Using Lemma \ref{lem:shiftedsobolevnorms}, we get
  \begin{equation*}
    \mathcal{S}_{2}(D_{K}f)\leq\frac{1}{K}\sum_{k=0}^{K-1}\mathcal{S}_{2}\big(a(\varepsilon)^{-k}\cdot f\big)\ll\frac{\mathcal{S}_{2}(f)}{K}\sum_{k=0}^{K-1}\lVert a(\varepsilon)\rVert_{\infty}^{k\iota}
  \end{equation*}
  for some~$\iota>0$. Thus~$\beta=\frac{\iota}{2}\log\lVert a(\varepsilon)\rVert_{\infty}$ will do.
\end{proof}

We are finally in the position to prove the main theorem.
\begin{proof}[Proof of Theorem \ref{thm:mainthmnumberfield}]
  Let~$y\in\mathfrak{o}$ be totally positive and assume that~$f\in C_{c}^{\infty}(\Gamma\backslash G)$ is real-valued. Using \eqref{eq:convexcombination}, one obtains
\begin{align*}
  \notag\frac{\phi(y)}{N(y)}\lvert\mu_{y}^{\alpha,\times}(f-E_{f})\rvert^{2}&=\frac{\phi(y)}{N(y)}\lvert\mu_{y}^{\alpha,\times}(D_{K}f)\rvert^{2}\leq\frac{\phi(y)}{N(y)}\mu_{y}^{\alpha,\times}\big((D_{K}f)^{2}\big)\\
  &\leq\mu_{y}^{\alpha}\big((D_{K}f)^{2}\big).
\end{align*}
The function~$(D_{K}f)^{2}-{E_{f}}^{2}$ is smooth with compact support. Using Proposition \ref{prop:rationalpoints}, we obtain
\begin{align}
  \label{eq:bounddiscrepancyrationalpoints}\mu_{y}^{\alpha}\big((D_{K}f)^{2}\big)&\ll\int_{\Gamma\backslash G}(D_{K}f)^{2}+N(y)^{-\kappa_{2}\alpha}\mathcal{S}\big((D_{K}f)^{2}-{E_{f}}^{2}\big)\\
  \notag&\ll\int_{\Gamma\backslash G}(D_{K}f)^{2}+N(y)^{-\kappa_{2}\alpha}\big(\mathcal{S}(D_{K}f)^{2}+\mathcal{S}(f)^{2}\big),
\end{align}
where the second inequality follows from \eqref{eq:sobolevembedding} and Lemma \ref{lem:productofsobolevnorms}. As~$\varepsilon$ could be chosen arbitrary, we assume that~$\kappa_{\tau}\log\lVert a(\varepsilon)\rVert_{\infty}>1$, so that the error term in Lemma \ref{lem:subcollectionstrick} is dominated by~$K^{-1}$. Furthermore, as of Lemma \ref{lem:propertiessobolev} we can find an~$\Ltwo$-Sobolev norm~$\mathcal{S}_{2}$ dominating~$\mathcal{S}$ such that~$\mathcal{S}\big((D_{K}f)^{2}\big)\leq e^{\beta K}\mathcal{S}_{2}(f)^{2}$ for some~$\beta>0$. Applying these bounds to \eqref{eq:bounddiscrepancyrationalpoints} and multiplying the resulting expression by~$\frac{N(y)}{\phi(y)}$, we obtain
\begin{equation}
  \label{eq:prefinal}\lvert\mu_{y}^{\alpha,\times}(f-{E_{f}})\rvert^{2}\ll\tfrac{N(y)}{\phi(y)}\left(K^{-1}+N(y)^{-\kappa_{2}\alpha}e^{\beta K}\right)\mathcal{S}_{2}(f)^{2}
\end{equation}
For sufficiently large~$N(y)$ we can find an integer~$K=\delta\frac{\kappa_{2}}{\beta}\alpha\log N(y)$ with~$\delta\in(\frac{1}{2},\frac{3}{4})$. Let~$\kappa_{3}=\frac{\kappa_{2}}{4}$, so that~$N(y)^{-\kappa_{2}\alpha}e^{\beta K}\leq N(y)^{-\kappa_{3}\alpha}$ and \eqref{eq:prefinal} is bounded by
\begin{equation*}
  \lvert\mu_{y}^{\alpha,\times}(f-{E_{f}})\rvert^{2}\ll\tfrac{N(y)}{\phi(y)}\left(\tfrac{\beta}{\kappa_{2}\alpha\log N(y)}+N(y)^{-\kappa_{3}\alpha}\right)\mathcal{S}(f)^{2}
\end{equation*}
Note that~$\log N(y)\leq N(y)^{\kappa_{3}\alpha}$ for all totally positive~$y\in\mathfrak{o}$, and therefore applying the bound obtained to both the real and the imaginary part separately, we can conclude that for all~$f\in C_{c}^{\infty}(\Gamma\backslash G)$
\begin{equation}\label{eq:effectiveresult}
  \bigg\lvert\mu_{y}^{\alpha,\times}(f)-\int_{\Gamma\backslash G}f\bigg\rvert\ll\alpha^{-\frac{1}{2}}\left(\frac{N(y)}{\phi(y)\log N(y)}\right)^{\frac{1}{2}}\mathcal{S}(f)
\end{equation}
Note that the implicit constant is proportional to~$\kappa_{3}^{-1}$ and does grows as the spectral gap becomes smaller.

We can now apply Proposition \ref{prop:totientbound} to obtain the theorem.
\end{proof}
\def\cprime{$'$} \providecommand{\bysame}{\leavevmode\hbox
  to3em{\hrulefill}\thinspace}
\providecommand{\MR}{\relax\ifhmode\unskip\space\fi MR }
\providecommand{\MRhref}[2]{%
  \href{http://www.ams.org/mathscinet-getitem?mr=#1}{#2} }
\providecommand{\href}[2]{#2}

\end{document}